\newcommand{\calf}{{\mathcal F}}
\newcommand{\aut}{{\rm Aut}}
\newcommand{\der}{\operatorname{Der}}
\newcommand{\Id}{{\rm Id}}
\newcommand{\spec}{\operatorname{Spec}}
\newcommand{\A}{{\mathbb A}}
\newcommand{\C}{{\mathbb C}}
\newcommand{\K}{{\Bbbk}}
\newcommand{\N}{{\mathbb N}}
\newcommand{\Z}{{\mathbb Z}}
\newcommand{\G}{{\mathbb G}}
\newcommand{\kK}{{\mathbb k}}
\newcommand{\kk}{{\mathbb k}}
\newcommand{\pol}{{\Bbbk[x_1,\dots,x_n]}}
\newtheorem{thm}{Theorem}[section]
\newtheorem{pro}[thm]{Proposition}
\newtheorem{cor}[thm]{Corollary}
\newtheorem{lem}[thm]{Lemma}
\theoremstyle{definition}
\newtheorem{defi}[thm]{Definition}
\newtheorem{quest}[thm]{Question}
\newtheorem{rem}[thm]{Remark}
\newtheorem{exa}[thm]{Example}
\newcommand{\beginenumit}{\begin{enumerate}[leftmargin=\parindent,align=left,labelwidth=\parindent,parsep=8pt,labelsep=0pt,label=\textbullet\ ]}
\newcommand{\beginenum}{\begin{enumerate}[leftmargin=\parindent,align=left,labelwidth=\parindent,parsep=8pt,labelsep=0pt,label=(\arabic*)\ ]}
\newcommand{\beginenumr}{\begin{enumerate}[leftmargin=\parindent,align=left,labelwidth=\parindent,labelsep=0pt,parsep=8pt,label=(\roman*)\ ]}
\newcommand{\beginenuma}{\begin{enumerate}[leftmargin=\parindent,align=left,labelwidth=\parindent,labelsep=0pt,parsep=8pt,label=(\alph*)\ ]}
\begin{document}

\title[]{On polynomial automorphisms commuting with a simple derivation}

\maketitle
\begin{center}
  {\sc Pierre-Louis Montagard}\footnote{\label{imag}IMAG, Univ Montpellier, CNRS, Montpellier, France},  
{\sc Iván Pan}\footnote{\label{cmat}CMAT, Universidad de la Republica, Montevideo, Uruguay, supported by CSIC (Udelar), ANII and PEDECIBA, of Uruguay}, {\sc Alvaro Rittatore}\footref{cmat}\let\thefootnote\relax\footnotetext{This work was partially supported by the CNRS International Research Laboratory IFUMI}
\end{center}

 \begin{abstract}
Let $D$ be a simple  derivation of the
polynomial ring $\pol$, where $\kk$ is an algebraically closed field of characteristic zero, and denote by $\aut(D)\subset\aut\bigl(\pol\bigr)$ the subgroup of $\kk$-automorphisms commuting with $D$.  We show that the connected component of $\aut(D)$  passing through the identity is a
unipotent algebraic group of dimension at most $n-2$, this bound being
sharp. Moreover, $\aut(D)$ is an algebraic group  if and only
if it is a connected ind-group.   Given a simple derivation $D$, we characterize when $\aut(D)$ contains a normal subgroup of translations. As an application of our techniques we
show that if $n=3$, then  either $\aut(D)$  is a discrete group or it is isomorphic to
the additive group acting by translations,  and give some insight on the case $n=4$.
 \end{abstract}

\section{Introduction}

Let $\pol$ be the polynomial ring over an algebraically closed field
$\K$ of characteristic 0. Recall that  a
$\K$-derivation of $\pol$ is a linear endomorphism  $D\in \operatorname{End}\bigl(\pol\bigr)$
such that $D(fg)=D(f)g+fD(g)$ for all $f,g\in \pol$; we denote $D\in
\der\bigl(\pol\bigr)$.

A central and, of course, very difficult problem is to classify
derivations up  conjugation by an automorphism of 
$\pol$.  If $n=2$ there is a partial
classification (\cite{BP2}), but essentially nothing is known in
higher dimension. In order to provide contributions to  the solution of this
 problem, one may try to classify derivations whose interest has
been proven in relation to various areas of mathematics. This is the
case, for example, of the  \emph{locally nilpotent
  derivations} and the  
\emph{simple derivations} (see Definition \ref{defi:locsimder}  below). In the first case, significant progress has
been made in recent decades (see \cite{kn:Freudenburg} and references
therein), while little is known about the second one.

Recall that if  $\Delta$ is a  locally nilpotent
  derivation  then  the formal series
  $e^{t\Delta}:=\sum_{i=0}^\infty
  \frac{t^i\Delta^i}{i!}$ 
acts as a finite sum on every polynomial and defines an element in
$\aut\bigl(\pol\bigr)$ (the group of $\K$-automorphisms of $\pol$) for all
$t\in \K$; in this way, $\Delta$ induces   an action of the 
additive group $\G_a=(\K,+)$ on $\pol$.
 Conversely, every action of
$\G_a$ on $\pol$ is of this form (see for example \cite[\S 1.5]{kn:Freudenburg}). When
$\K=\C$, the field of complex numbers,  if one associates to a
derivation $D$ the  polynomial vector field $\bigl(D(x_1), 
\ldots, D(x_n)\bigr)$, then $D$ is a locally 
nilpotent derivation if and only if the   general solution  of the differential
equation of the corresponding vector field is  polynomial (\cite{FiWa}).    

On the other hand, given a   simple derivation  $D\in\der\bigl(\pol\bigr)$  --- that is the only $D$-stable ideals are the trivial ones
---, one may extend  the polynomial ring $\pol$ to a ``skew''
polynomial ring $R$ by adjoining a new
indeterminate $t$ with the rule: $t\cdot f-f\cdot t =D(f)$ for any
$f\in\pol$. It is well known that $D$ is simple if and only if
$R$ has no non trivial bilateral ideals 
(see for example \cite[Chap. 2]{GoWa}). Finally, if $\K=\C$ and we consider the singular
foliation $\calf$ associated with the vector field $\bigl(D(x_1), \ldots,
D(x_n)\bigr)$ on $\C^n$, then $D$ being simple is equivalent to saying that
$\calf$ is not singular and any of its leaves is Zariski dense. For a more
general field, the previous  equivalence may be
described in terms of algebraic 
independence of power series in one indeterminate (\cite{Le}). 

Thus, if we compare the two kind of derivations previously
mentioned by focusing on
their corresponding flows (when $\K=\C$), then  locally
nilpotent and simple derivations  appear as opposite in a certain
sense.  More generally, if $\K$ is an arbitrary field and $\Delta$ is a 
locally nilpotent derivation of $\pol$, then  $\ker \Delta=\bigl\{f\in\pol; \Delta(f)=0\bigr\}$ is a $\K$-subalgebra of $\pol$
with transcendence degree over $\K$ equal to $n-1$
(see for example \cite[Pro.1.3.32(i)]{vdE}), whereas clearly $\ker
D=\K$ when $D$ is a
simple derivation.  One finds further evidence for this intuition  on the opposite behavior of simple and locally nilpotent derivations in
  dimension $2$  (\cite{MePa}, \cite{Pa}), looking at the automorphisms
  group of the derivations:

  \begin{defi}\label{defiautD}
Let $D\in \der\bigl(\pol\bigr)$. The \emph{automorphisms group of $D$}
is the isotropy group of $D$ for the action of $\aut\bigl(\pol\bigr)$
on $\der\bigl(\pol\bigr)$
by conjugations:
\[
  \aut(D)= \aut\bigl(\pol\bigr)_D\subset \aut\bigl(\pol\bigr).
  \]
  \end{defi}

\begin{thm}\label{th:MePan} If $D\in\der\bigl(\K[x_1,x_2]\bigr)$, then   

$(a)$ $D$ is locally nilpotent if and only if $\aut(D)$ is not an algebraic group.

$(b)$  If $D$ is simple, then $\aut(D)=1$.  \hfill \qed
\end{thm}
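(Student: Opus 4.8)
\medskip
\noindent\textit{Proof proposal.}\
The plan is to deduce both parts from the Jung--van der Kulk description of $\aut\bigl(\K[x_1,x_2]\bigr)$ as an amalgamated product of its affine and de Jonquières subgroups, together with a few elementary remarks on simple derivations. Write $D=f\,\partial_{x_1}+g\,\partial_{x_2}$, $f=D(x_1)$, $g=D(x_2)$. If $D$ is simple then $D\neq0$, $\ker D=\K$, $\gcd(f,g)\in\K^{\ast}$, the polynomials $f,g$ have no common zero in $\K^{2}$, and $D$ has no \emph{Darboux polynomial} (no non-unit $h$ with $D(h)\in(h)$): each of these would otherwise yield a non-trivial $D$-stable ideal, namely $(h)$ for a non-constant $h\in\ker D$, the ideal $\bigl(\gcd(f,g)\bigr)$, the maximal ideal at a common zero, and $(h)$ respectively. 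Note also that conjugation by an element of $\aut\bigl(\K[x_1,x_2]\bigr)$ preserves simplicity as well as the property of $\aut(D)$ being (or not being) algebraic.

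For part $(a)$, the implication ``$D$ locally nilpotent $\Rightarrow\aut(D)$ not algebraic'' is the easy one: Rentschler's theorem puts $D$, after conjugation, in the form $p(x_1)\,\partial_{x_2}$, and a direct check on the generators shows $(x_1,x_2)\mapsto\bigl(x_1,\,x_2+q(x_1)\bigr)$ commutes with it for \emph{every} $q\in\K[x_1]$; thus $\aut(D)$ contains a subgroup isomorphic to $\bigl(\K[x_1],+\bigr)$ and so has unbounded degree, hence is not an algebraic group. The converse, ``$D$ not locally nilpotent $\Rightarrow\aut(D)$ algebraic'', is the substantive half, and here I would invoke \cite{MePa}: the point is to bound, via the amalgam, the degrees of the automorphisms commuting with $D$, so that $\aut(D)$ has bounded degree and is therefore an algebraic subgroup of $\aut\bigl(\K[x_1,x_2]\bigr)$.

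For part $(b)$, observe first that a simple derivation is never locally nilpotent, since a non-zero locally nilpotent $\Delta$ of $\K[x_1,x_2]$ has $\ker\Delta$ of transcendence degree $1$ over $\K$ while $\ker D=\K$; so by part $(a)$ the group $\aut(D)$ is algebraic. I would then prove $\aut(D)^{\circ}=1$ by excluding one-parameter subgroups. A $\G_a$ in $\aut(D)$ equals $e^{tE}$ for a non-zero locally nilpotent $E$ with $[D,E]=0$; after a common conjugation Rentschler gives $E=p(x_1)\,\partial_{x_2}$, and vanishing of $[D,E]=-\bigl(p\,\partial_{x_2}f\bigr)\partial_{x_1}+\bigl(f p'-p\,\partial_{x_2}g\bigr)\partial_{x_2}$ forces $f\in\K[x_1]$ and $f p'=p\,\partial_{x_2}g$; if $p$ is non-constant, comparing orders of vanishing along $x_1=\alpha$ for a root $\alpha$ of $p$ (using $\operatorname{char}\K=0$) gives $(x_1-\alpha)\mid f$, so $x_1-\alpha$ is a Darboux polynomial --- impossible; if $p\in\K^{\ast}$ then $f,g\in\K[x_1]$ and one gets either a Darboux polynomial from a root of $f$ or, when $f\in\K$, a non-constant element of $\ker D$ (or $D$ locally nilpotent) --- impossible again. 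A $\G_m$ in $\aut(D)$ is linearizable, $(x_1,x_2)\mapsto(\lambda^{a}x_1,\lambda^{b}x_2)$ with $(a,b)\neq(0,0)$; then $f,g$ are quasi-homogeneous of weights $a,b$, and as the fixed point $0$ is not a common zero of $f,g$ one of $a,b$ must vanish, whereupon the other coordinate divides $g$ (resp.\ $f$) --- another Darboux polynomial. Hence $\aut(D)^{\circ}=1$ and $\aut(D)$ is finite. Finally, a non-trivial finite-order automorphism is linearizable, so after conjugation $\aut(D)$ contains $\varphi=\operatorname{diag}(\zeta_1,\zeta_2)$, the $\zeta_i$ roots of unity not both $1$; non-singularity of $D$ at the fixed origin forces say $\zeta_1=1$, hence $\zeta_2\neq1$, and $\varphi\circ D=D\circ\varphi$ then gives $\varphi(g)=\zeta_2 g$, so every monomial of $g$ has positive $x_2$-degree, i.e.\ $x_2\mid g$ --- one last Darboux polynomial. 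Therefore $\aut(D)=1$.

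The main obstacle is the non-trivial half of $(a)$, the algebraicity of $\aut(D)$ when $D$ is not locally nilpotent; everything else is bookkeeping with $D$-stable ideals, Rentschler's theorem, and the linearization of tori and of finite-order automorphisms of $\K[x_1,x_2]$. One could also prove $(b)$ without appealing to $(a)$, by classifying a hypothetical non-trivial $\varphi\in\aut(D)$ directly through the amalgam --- conjugating it into the affine or the de Jonquières subgroup when it is elliptic, and using dynamical degrees when it is hyperbolic --- but the hyperbolic case and the elliptic case analysis make that route noticeably longer.
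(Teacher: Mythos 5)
First, be aware that the paper does not prove Theorem \ref{th:MePan} at all: it is stated with a \emph{qed} as a quoted result, part $(b)$ being the theorem of \cite{MePa} and the characterization $(a)$ coming from \cite{BP2} and \cite{Pa}. So there is no in-paper argument to match, and your proposal must be judged as a self-contained proof. The parts you actually argue check out: the easy implication of $(a)$ via Rentschler's normal form $p(x_1)\,\partial/\partial x_2$ and the unbounded-degree family $(x_1,x_2)\mapsto (x_1,x_2+q(x_1))$ is correct; and, \emph{granting} that $\aut(D)$ is algebraic, your elimination of $\G_a$-subgroups (the bracket computation, the $(x_1-\alpha)$-adic valuation comparison, and the resulting Darboux polynomials), of $\G_m$-subgroups (linearization plus the fact that $D(x_1),D(x_2)$ have no common zero, forced by simplicity), and of nontrivial finite-order elements (linearization of finite automorphisms of $\A^2$, then $x_2\mid D(x_2)$) is sound and gives $\aut(D)=1$.

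The genuine gap is that you never prove the substantive half of $(a)$ --- algebraicity of $\aut(D)$ when $D$ is not locally nilpotent --- you defer it to the literature, and to the wrong reference at that: \cite{MePa} proves $(b)$, whereas $(a)$ is precisely the main theorem of \cite{BP2} and \cite{Pa}; ``bound the degrees via the amalgam'' is the content of those papers, not a routine step one can wave at. Since your proof of $(b)$ runs through that algebraicity, it inherits the dependence: without it, your argument only shows that $\aut(D)^0$ is trivial and that $\aut(D)$ has no nontrivial torsion, which still leaves open an infinite countable discrete isotropy group --- exactly the possibility this paper cannot exclude in dimension $3$ (see Theorem \ref{thm:autDim3} and the discussion around it). So either state explicitly that you are importing the theorem of \cite{Pa}/\cite{BP2} as an ingredient --- in which case your derivation of $(b)$ is a genuinely different route from the original direct argument of \cite{MePa}, trading the amalgam analysis for the algebraicity theorem plus one-parameter-subgroup and torsion exclusions --- or supply the degree bound yourself; as written, the proposal does not stand as a proof of the full statement.
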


Recall that if $n\geq 2$, then  $\aut\bigl(\pol\bigr)$ does not admit a compatible
structure of group scheme, but it is an  ind-group  with filtration
induced by total degree (see for example \cite{Sha} and \cite[Chapter 4]{Ku}). More  precisely, if
we consider the set theoretical inclusion
$\aut\bigl(\pol\bigr)\hookrightarrow \pol^n$, given by $\varphi\mapsto
\bigl(\varphi(x_1),\dots, \varphi(x_n)\bigr)$, then the \emph{degree of
$\varphi$} induces a filtration 
 $\aut\bigl(\pol\bigr)=\bigcup_p \aut\bigl(\pol\bigr)_p$, where
  \[
\aut\bigl(\pol\bigr)_p= \bigl\{\varphi=(f_1,\ldots,f_n)\in\aut\bigl(\pol\bigr) \mathrel{ : } \forall i\  \operatorname{deg}(f_i)\leq p\ \bigr\}.
\]

It  follows that  a closed group $G\subset
\aut\bigl(\pol\bigr)$ is algebraic if and only if there exists $p$
such that $G\subset
\aut\bigl(\pol\bigr)_p$.
Clearly, $\aut\bigl(\pol\bigr)$ identifies in a canonical way
with $\aut(\A^n)$, the automorphisms group of the $n$-dimensional affine
space;  under this identification,  if $G\subset \aut\bigl(\pol\bigr)$ is algebraic, then the
induced  action $G\times \A^n\to\A^n$ is regular.

If $n>2$, it is not difficult to show that $\aut(D)$ is never
algebraic when $D$ is locally nilpotent; however, very few is known about
the simple case. In fact,   $\aut(D)$ is expected to be
  algebraic:  in \cite{Ya}, after exhibiting examples of
simple derivations whose isotropy is $\G_a$ acting  by translations,
the author conjectures that in general, up to conjugation, if $D$ is
simple then $\aut(D)$ acts by translations.

The main objective of
this work is to give evidence in the direction 
of proving that $\aut(D)$ is in fact an algebraic group.
 The structure of this work is as follows:

In Section \ref{sec:prelim} we collect some basic definitions and
  results on $\der\bigl(\pol\bigr)$ and the ind-group structure of $\aut
\bigl( \pol  \bigr)$.

In Section \ref{sec:autsimpder} we  prove that if $D$ is a simple
derivation, then $\aut(D)^0$, the connected component of $\aut(D)$
passing through the identity, is a unipotent algebraic group (Theorem
\ref{thm:aut0conn}), and that any algebraic element of $\aut(D)$ is unipotent
(Theorem \ref{thm:aut0unip}). In
particular, $\aut(D)$ is algebraic if and only if it is a connected group.

In Section \ref{sec:simpleforunip} we show that if
$D\in\operatorname{Der}\bigl(\pol\bigr)$ is a simple 
derivation, then $\dim\aut(D)^0\leq n-2$ (Theorem \ref{thm:dimaut0}), and that this
bound is sharp (Corollary \ref{cor:boundsharp}).

When a simple derivation is such that $\aut(D)$ contains a non trivial
translation, then one can give more insight on the structure of
$\aut(D)^0$; this is done in Section \ref{sec:translations}. 
More precisely, if $D$ is a simple derivation, we characterize in terms of a coordinate system when $\aut(D)$ contains a  non trivial normal subgroup of translations  and we use this characterization to somewhat describe  $\aut(D)^0$ (Theorem \ref{th:AutAlg}). We apply our point
of view  in order to prove that  the  isotropy group of a simple
derivation of $\K[x_1,x_2,x_3]$ is either trivial, either the 
additive group acting by translations (upon conjugation) or  it is
an infinite  countable discrete
group  ---
however, we remark that, up to our knowledge, there is
no known example of this last possibility.

Most examples of explicit simple derivations  of $\pol$ are
 produced by extending, recursively, a simple derivation in $\K[x_1]$,
 using Shamsuddin's criterion (see Proposition \ref{prop:sham}), or some of
 its variants, and are therefore of the form $D=\partial/\partial
 x_1+\sum_{i=2}^na_i \partial/\partial x_i$, $a_i\in \pol$. In this case --- we say that $D$  admits $x_1$ as a \emph{linear coordinate} ---, one can describe quite well the action of an element of $\aut(D)^ 0$  on the coordinate $x_1$ (see Lemma \ref{lem:deltatrans}). If $n=4$,  we use this description in order to give more insight on the algebraicity of $\aut(D)$ (Theorem \ref{thm:dim4}).

\section{Preliminaries}
\label{sec:prelim}

In this section we recall some basic definitions and results on simple
and locally nilpotent  derivations.

Let $V$ be a $\Bbbk$-vector space. A linear endomorphism
$\varphi:V\to V$ is \emph{locally finite} if for all $v\in V$, there exists a
finite dimensional $\varphi$-stable vector space $W$ containing
$v$. If moreover $\varphi|_{_W}$ is nilpotent (resp. semisimple) for all finite
dimensional $\varphi$-stable vector space $W$, then $\varphi$ is said
to be \emph{locally nilpotent} (resp. \emph{locally semisimple}). We say that    
$\varphi$ is \emph{locally unipotent} if $\varphi-\Id$
is locally nilpotent.

If $\varphi\in\aut\bigl(\pol\bigr)$, we denote by $\langle\varphi \rangle$ the
group generated by $\varphi$; we say that $\varphi$ is \emph{algebraic} if
 the closure $\overline{\langle\varphi \rangle}\subset
\aut(\A^n)$ is an algebraic group --- along this work, 
  closures are taken with respect to the inductive topology of the
  ind-variety structure. It is well known that $\varphi$ is algebraic
  if and only if the total degree of  the positive powers of 
  $\varphi$ is bounded:  $\max \bigl\{ \deg(\varphi^\ell\bigr)\mathrel{:}
  \ell\geq 0\bigr\}<\infty$ (see  for example \cite[Lemma 9.1.4]{kn:furterkraft}).

  Finally, it is easy to prove that $\varphi$ is algebraic if and only
  if  $\varphi$ is locally
		finite  (see \cite[Lemma 9.1.4]{kn:furterkraft});
in particular, $\varphi$ is an algebraic unipotent automorphism if and
only if $\varphi$ is locally unipotent.

\begin{defi}\label{defi:locsimder}
A  \emph{locally nilpotent derivation}
  is a derivation $D$ that is  locally nilpotent as a linear endomorphism of
   $\pol$.

  Let $D$ be a derivation of $\kk[x_1,\ldots,x_n]$. An ideal $I$ of
$\kk[x_1,\ldots,x_n]$ is \emph{$D$-stable} if $D(I)\subset I$. If the
only $D$-stable  ideals are the trivial ones, we say that $D$ is a
\emph{simple derivation}.
\end{defi}

\begin{rem}\label{rem:restsim}
(1) It is clear that if $D$ is a derivation of $\pol$, then $D=\sum_i a_i\frac{\partial}{\partial x_i}$, with $a_i\in
  \kk[x_{1},\dots, x_n]$.

\noindent (2) Let $D=\sum_{i=1}^n a_i\frac{\partial}{\partial x_i}$
be a derivation of $\pol$. Then the ideal $\langle
a_1,\ldots,a_n\rangle_{\pol}\subset \pol$ is $D$-stable.

If $D$ is a simple derivation then $\langle a_1,\dots,
a_n\rangle=\pol$.

  \noindent (3) If  $D=\sum_{i=1}^n a_i\frac{\partial}{\partial x_i}\in\der\bigl(\pol\bigr)$  
  and $s$ is such that 
  $a_i\in\kk[x_{s+1},\dots, x_n]$  for $i>s$,  then
  the restriction $\overline{D}=D|_{_{\kk[x_{s+1},\dots, x_n]}}:\kk[x_{s+1},\dots,
  x_n]\to \kk[x_{s+1},\dots, x_n]$ is also a derivation. If moreover
  $D$ is simple, then $\overline{D}$ is also a simple derivation.
  \end{rem}

\begin{defi}
	Let $D\in  \der\bigl( \kk[x_1,\dots, x_n]\bigr)$. We say that $f\in
	\pol\setminus \{0\}$ is a \emph{Darboux polynomial for $D$ of
		eigenvalue $\lambda\in \kk[x_1,\dots, x_n]$} if $D(f)=\lambda
	f$. In the literature $f$ is also called an \emph{eigenvector of $D$}. 
\end{defi}

\begin{rem}\label{rem:darboux}
	\beginenum
	\item  If $f\in
	\pol$ is a Darboux polynomial  for $D$, then the ideal $\langle
	f\rangle\subset \kk[x_1,\dots, x_n]$ is $D$-stable. 
	
	\item The kernel of a derivation $D$, denoted as $\ker(D)$,   is the subspace of Darboux 
	polynomials of eigenvalue equal to $0$.
	
	\item  In particular,  if
	$D$ is a simple derivation, the only Darboux polynomials are 
	the constant polynomials, and therefore $\ker(D)=\Bbbk$. 
\end{enumerate}
\end{rem}

\begin{rem}\label{rem:GaandLND}
Let  $\Delta\in \der\bigl(\pol\bigr)$ be  a locally nilpotent
derivation and consider $\G_a= \bigl\{e^{t\Delta}\mathrel{:}
t\in\K\bigr\}$. Recall that  the linear span 
  $\langle \Delta\rangle_\K \subset \operatorname \der\bigl(\pol\bigr)$ is
    naturally identified with the Lie algebra of $\G_a$ and 
   $\ker(\Delta)=\pol^{\G_a}$, the subalgebra of
  $\G_a$-invariants.   
\end{rem}

\begin{defi} \label{defi:coord} If $\Delta\in \der\bigl(\pol\bigr)$ is a locally nilpotent
derivation,  we say that $s\in \pol$ is a \emph{slice} for $\Delta$ if $\Delta(s)=1$.

Let $D\in \der\bigl(\pol\bigr)$ be an arbitrary derivation. We say that $s\in \pol$ is a  \emph{linear coordinate
  for  $D$}  if
  $D(s)=1$ and there
  exist polynomials $s_2,\ldots,s_n$ such that
  $\K[s,s_2,\ldots,s_n]=\pol$:
\[
  D=\frac{\partial}{\partial s}+\sum_{i=2}^na_i \frac{\partial}{\partial s_i} \ ,\ a_i\in\K[s,s_2,\dots,s_n].
\]
\end{defi}

\section{Automorphisms of simple derivations: first results}
\label{sec:autsimpder}

As noted in the introduction, the main object of study of this work is
the isotropy subgroup of a simple derivation (see Definition \ref{defiautD}).

\subsection{On the ind-group structure of the isotropy group of a simple derivation}\ %

Recall that $\aut(\A^n)$ is an ind-group, therefore, $\aut(D)$ being
an isotropy group, it is
also an ind-group.  We begin this section by showing that if $D$ is
simple, then $\aut(D)^0\subset \aut(D)$,
the unique  connected component containing the identity, is an
algebraic group.

First, we recall the following result of Baltazar (see \cite[Proposition 7]{Baltazar}): 

\begin{pro}\label{prop:identity}
Let $D$ be a simple derivation of $\pol$. Then every non trivial element of
$\aut(D)$ admits no fixed point. \hfill \qed
\end{pro}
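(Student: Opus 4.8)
The plan is to realize the fixed-point locus of a nontrivial $\varphi\in\aut(D)$ as the zero set of a $D$-stable ideal, and then let simplicity of $D$ do the work. Following the identification $\aut\bigl(\pol\bigr)\cong\aut(\A^n)$ used in the paper, an element $\varphi\in\aut\bigl(\pol\bigr)$ is recorded by the tuple $\bigl(\varphi(x_1),\dots,\varphi(x_n)\bigr)$, and a point $p\in\A^n$ is fixed by $\varphi$ exactly when $\bigl(\varphi(x_i)-x_i\bigr)(p)=0$ for all $i$. The hypothesis $\varphi\in\aut(D)$ says precisely that $\varphi\circ D=D\circ\varphi$ on $\pol$. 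First I would introduce the ideal
\[
J=\bigl\langle\,\varphi(x_1)-x_1,\dots,\varphi(x_n)-x_n\,\bigr\rangle\subset\pol .
\]
Since $\K$ is algebraically closed, $V(J)=\fix(\varphi)$, so $J=\pol$ exactly when $\varphi$ has no fixed point, while $J=0$ exactly when $\varphi=\mathrm{id}$.

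The key step is to show that $J$ is $D$-stable. Write $D=\sum_i a_i\,\partial/\partial x_i$ with $a_i=D(x_i)\in\pol$. Using $\varphi\circ D=D\circ\varphi$ I compute, for each $i$,
\[
D\bigl(\varphi(x_i)-x_i\bigr)=\varphi\bigl(D(x_i)\bigr)-D(x_i)=\varphi(a_i)-a_i=a_i\bigl(\varphi(x_1),\dots,\varphi(x_n)\bigr)-a_i(x_1,\dots,x_n).
\]
Then I invoke the elementary fact that for any $P\in\pol$ one has $P\bigl(\varphi(x_1),\dots,\varphi(x_n)\bigr)-P(x_1,\dots,x_n)\in J$ — substituting the arguments one at a time, each successive difference is visibly a multiple of some $\varphi(x_j)-x_j$. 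Taking $P=a_i$ gives $D\bigl(\varphi(x_i)-x_i\bigr)\in J$ for every $i$; since $D$ is a derivation and the elements $\varphi(x_i)-x_i$ generate $J$, it follows that $D(J)\subset J$.

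Finally, simplicity of $D$ forces $J$ to be $0$ or $\pol$. If $\varphi\neq\mathrm{id}$ then $\varphi(x_i)\neq x_i$ for some $i$, so $J\neq 0$, hence $J=\pol$ and $\fix(\varphi)=V(J)=\emptyset$, which is the assertion. The argument is entirely formal and short; the only point that needs a little care is the displayed computation of $D\bigl(\varphi(x_i)-x_i\bigr)$, where the commutation relation and the ``one variable at a time'' divisibility observation must be combined correctly. (Alternatively, since a simple derivation is nonsingular one could work in the completed local ring at a hypothetical fixed point, where $D$ admits a formal flow-box; but the global ideal-theoretic argument above is cleaner.)
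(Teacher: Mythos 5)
Your argument is correct: the ideal $J=\bigl\langle \varphi(x_1)-x_1,\dots,\varphi(x_n)-x_n\bigr\rangle$ cuts out $\fix(\varphi)$, it is $D$-stable because $D\bigl(\varphi(x_i)-x_i\bigr)=\varphi(a_i)-a_i\in J$ by the commutation relation together with the one-variable-at-a-time divisibility observation, and simplicity then forces $J=0$ (i.e.\ $\varphi=\operatorname{Id}$) or $J=\pol$ (i.e.\ no fixed point). Note that the paper states this proposition without proof, citing Baltazar's Proposition 7, and the argument given there is essentially this same ideal-theoretic one, so your proposal coincides with the intended proof.
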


As a consequence of  Proposition \ref{prop:identity} above and some general results about ind-groups,  we obtain the following description of
$\aut(D)$ as an ind-group.

\begin{thm}\label{thm:aut0conn}
If $D\in\der\bigl(\pol\bigr) $ is simple, then  $\aut(D)^0$ is an
affine algebraic group of dimension at most $n$  which acts freely on $\A^n$, and there exists a
family of automorphisms $(\varphi_i)_{i\in \N}$, with $\varphi_i\in\aut(D)$, such that $\varphi_{0}=\Id$ and 
\begin{equation}
  \label{eq:varphilocfin}
  \aut(D)=\bigcup_{i\in \N} \varphi_i\aut(D)^0.
\end{equation}

In particular $X_n=\bigcup_{i=0}^n\varphi_i\aut(D)^0$, $n\in\N$, is a
filtration of $\aut(D)$ by (affine) algebraic varieties.
\end{thm}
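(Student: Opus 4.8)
The plan is to build on Proposition~\ref{prop:identity} and on standard structure theory of ind-groups, in the spirit of the treatment in \cite{kn:furterkraft}. The key input is that a connected ind-group is covered by the images of finite-dimensional algebraic families (it is ``generated by curves''), and that for an ind-group acting on an affine variety the isotropy groups are closed ind-subgroups; combined with the fact that the ambient ind-group $\aut(\A^n)$ is filtered by the algebraic varieties $\aut(\pol)_p$, one gets that a closed ind-subgroup is algebraic as soon as it has bounded degree. So the heart of the matter is to show $\aut(D)^0$ has bounded degree, equivalently that it is a \emph{nested} (or \emph{bounded}) ind-group, and then invoke the known fact that a connected nested ind-group is an algebraic group.

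First I would argue that $\aut(D)^0$ is algebraic. Since $\aut(D)$ is a closed ind-subgroup of $\aut(\A^n)$, its neutral component $\aut(D)^0$ is again a closed ind-subgroup, and by the general theory (Furter--Kraft) a connected ind-group is either algebraic or contains a closed ind-subgroup isomorphic to an infinite-dimensional vector group $\bigoplus \G_a$ or to an infinite product-like object of positive-dimensional groups; in either of the non-algebraic cases one produces, inside $\aut(D)^0$, a one-parameter unipotent subgroup, hence a locally nilpotent derivation $\Delta$ commuting with $D$, with $\Delta$ lying in the Lie algebra $\operatorname{Lie}\bigl(\aut(D)^0\bigr)$. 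The key point is that such a $\Delta$ cannot exist unless $\aut(D)^0$ is already finite-dimensional: if $\Delta$ is a nonzero locally nilpotent derivation commuting with $D$, then $\ker\Delta$ is a $D$-stable subalgebra of transcendence degree $n-1$, and one uses simplicity of $D$ (no nonconstant Darboux polynomials, Remark~\ref{rem:darboux}) together with the freeness of the $\G_a$-action — guaranteed by Proposition~\ref{prop:identity}, since a nontrivial $e^{t\Delta}\in\aut(D)$ has no fixed point — to bound everything; alternatively, and more cleanly, I would just cite the ind-group dichotomy to conclude that a connected ind-subgroup of $\aut(\A^n)$ all of whose one-parameter subgroups are algebraic and which acts freely on $\A^n$ must be of bounded degree. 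Freeness gives the dimension bound: an algebraic group acting freely on $\A^n$ embeds (set-theoretically, via any orbit map) into $\A^n$, so $\dim\aut(D)^0\le n$; affineness follows since a connected algebraic group acting faithfully and freely on an affine variety with an orbit that is closed... more simply, $\aut(D)^0$ is a closed subgroup of $\aut(\pol)_p$ for some $p$, which is an affine variety, and a closed subgroup of an affine algebraic group is affine. The free action on $\A^n$ is exactly Proposition~\ref{prop:identity} restricted to $\aut(D)^0$.

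Next I would produce the family $(\varphi_i)_{i\in\N}$ and the decomposition~\eqref{eq:varphilocfin}. The quotient $\aut(D)/\aut(D)^0$ is the group of connected components; I would show it is (at most) countable. This follows because $\aut(D)=\bigcup_p \bigl(\aut(D)\cap\aut(\pol)_p\bigr)$ is an increasing union of algebraic varieties, each of which has finitely many irreducible components, hence finitely many components meeting any given $\aut(\pol)_p$; taking the union over $p\in\N$ gives countably many cosets of $\aut(D)^0$. Picking one representative $\varphi_i$ from each coset, with $\varphi_0=\Id$, yields~\eqref{eq:varphilocfin}. Finally, each $X_n=\bigcup_{i=0}^n\varphi_i\aut(D)^0$ is a finite union of translates of the affine algebraic variety $\aut(D)^0$, hence is itself an affine algebraic variety, and $X_0\subset X_1\subset\cdots$ with $\bigcup_n X_n=\aut(D)$, so $(X_n)_{n\in\N}$ is the desired filtration.

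The main obstacle is the algebraicity of $\aut(D)^0$: establishing the degree bound on the neutral component is the substantive step, and it is where simplicity of $D$ (via the absence of Darboux polynomials and via Proposition~\ref{prop:identity}'s fixed-point-freeness) must really be used, as opposed to the rest of the argument which is soft ind-group formalism. I expect the cleanest route is to rule out an infinite-dimensional unipotent (or more generally non-nested) piece by exhibiting a nonconstant invariant of an associated $\G_a$-action — contradicting $\ker D=\K$ once one propagates $D$-stability — but one must be careful that the vector group inside $\aut(D)^0$ genuinely corresponds to commuting locally nilpotent derivations, which requires knowing the Lie algebra of $\aut(D)^0$ sits inside $\der(\pol)$ and consists of $D$-commuting derivations; this identification, standard for algebraic subgroups of $\aut(\A^n)$, should be spelled out.
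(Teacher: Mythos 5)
The soft parts of your argument are fine and essentially coincide with the paper's: the countability of the set of cosets of $\aut(D)^0$, obtained from the filtration $\aut(D)=\bigcup_p\bigl(\aut(D)\cap\aut\bigl(\pol\bigr)_p\bigr)$, and the resulting filtration of $\aut(D)$ by the finite unions $X_n$ are exactly what the paper extracts from \cite[Propositions 1.7.1 and 2.2.1]{kn:furterkraft}. The genuine gap is in the central step, the algebraicity (equivalently, finite-dimensionality) of $\aut(D)^0$. You rest it on a dichotomy --- ``a connected ind-group is either algebraic or contains a closed copy of $\bigoplus\G_a$, or some infinite product-like object'' --- which is not a theorem of \cite{kn:furterkraft} and for which you give neither reference nor proof; and your fallback, ``just cite the ind-group dichotomy to conclude that a connected ind-subgroup acting freely with algebraic one-parameter subgroups has bounded degree'', simply restates the conclusion to be proved.

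Moreover, even granting such a dichotomy, the contradiction you sketch does not materialize. Producing a one-parameter unipotent subgroup of $\aut(D)^0$, i.e.\ a nonzero locally nilpotent $\Delta$ with $[D,\Delta]=0$, contradicts nothing: such subgroups exist precisely when $\aut(D)^0$ is nontrivial, for instance the translations commuting with the simple derivations $D_I$ of Example \ref{exa_maximal}, where $\aut(D_I)^0\cong\G_a^{n-2}$. Nor does the $D$-stability of $\ker\Delta$ help: simplicity of $D$ forbids nontrivial $D$-stable \emph{ideals}, not subalgebras --- indeed $D(\ker\Delta)\subset\ker\Delta$ always holds in this situation and is used positively in Propositions \ref{prop:tras0} and \ref{prop_locally}. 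The phrase ``to bound everything'' hides exactly the missing estimate. The mechanism that actually works, and that the paper invokes, is different: by Proposition \ref{prop:identity} the action of $\aut(D)$ on $\A^n$ is free, so for any point the orbit map $\aut(D)^0\to\A^n$ is injective; by the Furter--Kraft results on orbit maps of connected ind-groups this forces $\dim\aut(D)^0\le n$, and a connected ind-group of finite dimension is an affine algebraic group \cite[Propositions 1.8.3 and 7.1.2]{kn:furterkraft}. Your observation that ``an algebraic group acting freely embeds set-theoretically into $\A^n$'' applies injectivity only \emph{after} algebraicity has been assumed, so it cannot close this circle; as written, the key step of the theorem remains unproved.
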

\begin{proof}
The first assertion follows from Proposition \ref{prop:identity}
above  and \cite[Propositions 1.8.3 and 7.1.2]{kn:furterkraft}. By
\cite[proposition 1.7.1 and 2.2.1]{kn:furterkraft} it follows that $\aut(D)$
is a countable disjoint union of connected algebraic varieties:
$\aut(D)=\bigcup_{i\in \N} Y_i$,  such that $Y_{0}=\aut(D)^0$, where
$\aut(D)^0$ is  a connected and normal algebraic subgroup of $\aut(D)$.

If we choose $\varphi_i\in Y_i$, $i\neq {0}$, then $\varphi_i\aut(D)^0= Y_i$, since left
multiplication by $\varphi_i$ is an isomorphism of ind-varieties. It
follows that $\aut(D)=\bigcup_{i\in \N} \varphi_i\aut(D)^0$, where $\varphi_{0}=\Id$.
\end{proof}

   \begin{quest}\label{que:count}
     To our knowledge, there are no known  examples of a simple
     derivation of $\pol$ with non-connected automorphisms group.

     \begin{quote} \emph{Is
     it true that if $D$ is a simple derivation, then $\aut(D)$ is
     connected?}
\end{quote}
   
   In view of Theorem \ref{thm:aut0conn}, this would
     imply that any  simple derivation has algebraic automorphisms group. 
     \end{quest}

\begin{thm}\label{thm:aut0unip}
Let  $D\in\der\bigl(\pol\bigr)$ be a simple derivation and $\varphi\in \aut(D)$. If $\varphi$ is algebraic, then $\varphi$ is unipotent.
  In particular, if $G\subset\aut(D)$ is an algebraic subgroup, then $G$ is a unipotent subgroup and therefore $G$ is a closed connected subgroup of $\aut(D)^0$. 
\end{thm}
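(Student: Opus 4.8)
The plan is to extract the information from Proposition \ref{prop:identity} (fixed-point freeness) together with the structure theory of algebraic group actions, to rule out any semisimple part. First I would reduce to the case of a single automorphism: if $\varphi\in\aut(D)$ is algebraic, then $G:=\overline{\langle\varphi\rangle}$ is an algebraic subgroup of $\aut(D)$ acting regularly on $\A^n$, so it suffices to show every algebraic subgroup $G\subset\aut(D)$ is unipotent (the ``in particular'' then follows by taking $G=\overline{\langle\varphi\rangle}$, noting $\varphi$ is unipotent iff this group is unipotent). Recall that $\aut(D)^0$ is already known to be a \emph{unipotent} algebraic group by Theorem \ref{thm:aut0conn} — wait, that theorem only gives affine of dimension $\le n$; unipotence is exactly what is being added here — so I cannot use it; instead I must argue directly.

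The heart of the argument: suppose $G\subset\aut(D)$ is algebraic but not unipotent. Then $G$ contains a nontrivial torus, or at least a nontrivial semisimple element $g$, i.e. $g\neq\Id$ is a diagonalizable (locally semisimple) element of $G\subset\aut(\A^n)$. The key step is to show that a nontrivial semisimple automorphism of $\A^n$ of finite order — or more generally a nontrivial element of a torus — must have a fixed point, contradicting Proposition \ref{prop:identity}. For a torus $T\subset G$ of positive dimension, I would invoke the fact that a reductive group acting on affine space $\A^n$ (or on any affine variety with trivial, i.e. polynomial-ring, coordinate algebra on which $T$ acts rationally) has a fixed point: concretely, the Bialynicki-Birula / Luna-type argument, or most simply, a torus action on $\A^n$ can be linearized (or at least: the invariant ring $\pol^T$ is a finitely generated $\K$-algebra and $\operatorname{Spec}$ of it receives a $T$-equivariant map, and by properness of the quotient in the torus case one finds a closed orbit, which for a torus is a point since $\A^n$ contains no nontrivial complete $T$-orbit) — this yields a $T$-fixed point $p\in\A^n$. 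Then every element of $T$ fixes $p$, so any nontrivial $t\in T$ is a nontrivial element of $\aut(D)$ with a fixed point, contradicting Proposition \ref{prop:identity}. If $G$ has no positive-dimensional torus but is still non-unipotent, then $G$ contains an element of finite order $\neq\Id$ (the image of a nontrivial semisimple element, or a torsion element detected via $G/G_u$ where $G_u$ is the unipotent radical and $G/G_u$ is a nontrivial finite-by-reductive group); again such a finite-order automorphism of $\A^n$ fixes a point — e.g. by the fixed-point formula, or because the cyclic group it generates, being reductive, has a fixed point on $\A^n$ — contradicting Proposition \ref{prop:identity}.

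Once $G$ is unipotent, it is automatically connected (unipotent algebraic groups in characteristic zero are connected), hence $G\subset\aut(D)^0$, and it is closed in $\aut(D)^0$ because it is an algebraic subgroup; closedness in the ind-topology of $\aut(D)$ then follows since $\aut(D)^0$ is closed in $\aut(D)$. I expect the main obstacle to be the clean justification of the fixed-point claim for arbitrary reductive (not just torus) actions on $\A^n$ in the ind-group setting — i.e. verifying that an algebraic subgroup of $\aut(\A^n)$ acting in the induced regular way is amenable to the standard theorems guaranteeing fixed points for reductive groups on affine space; the torus case is classical, and reducing the general semisimple/finite-order case to a torus or cyclic subgroup and then applying the same principle should work, but the write-up needs care to avoid circularity with Theorem \ref{thm:dimaut0} and to stay within the hypotheses actually available at this point in the paper.
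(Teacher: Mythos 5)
Your overall strategy coincides with the paper's: combine the fixed-point-freeness of nontrivial elements of $\aut(D)$ (Proposition \ref{prop:identity}) with a fixed-point existence statement for semisimple automorphisms of $\A^n$. The paper implements this at the level of a single element: it takes the Jordan decomposition $\varphi=\varphi_s\varphi_u$ with $\varphi_s,\varphi_u\in\overline{\langle\varphi\rangle}\subset\aut(D)$, quotes Furter--Kraft (Proposition 15.9.3) to the effect that a semisimple automorphism of $\A^n$ always has a fixed point, concludes $\varphi_s=\Id$, and then gets the statement about subgroups for free (every element unipotent, hence $G$ unipotent, hence connected in characteristic zero, hence $G\subset\aut(D)^0$).

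The gap is precisely in your justification of the fixed-point claim, which is the only nontrivial external input. (a) For a torus $T$ acting on $\A^n$: linearization is not available (it is the open linearization problem, not a theorem), and the closed-orbit argument is incorrect --- a Zariski-closed orbit need not be complete, hence need not be a point; for instance $t\cdot(x,y)=(tx,t^{-1}y)$ on $\A^2$ has the closed orbit $\{xy=1\}\cong\G_m$. The torus case is nevertheless true, but the correct reference is Bia{\l}ynicki-Birula's fixed point theorem for torus actions on $\A^n$. (b) More seriously, the case you dismiss quickly --- a nontrivial semisimple element of finite order, or more generally ``a reductive group acting on $\A^n$ has a fixed point'' --- is not a standard available theorem: for general reductive groups this is a well-known open problem, and even for a finite cyclic group acting algebraically on $\A^n$ the existence of a fixed point requires a genuine argument (prime-power order follows from Smith theory; the general semisimple case is exactly the content of the Furter--Kraft proposition the paper cites). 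As written, your proof therefore assumes the key input rather than proving or properly citing it; you in fact flag this yourself at the end. With that input supplied (e.g.\ by citing Furter--Kraft for semisimple elements), your reduction works, though it is cleaner to argue on the single algebraic element $\varphi$ via its Jordan decomposition inside $\overline{\langle\varphi\rangle}\subset\aut(D)$, as the paper does, rather than on arbitrary algebraic subgroups and their tori or torsion elements.
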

\begin{proof}
If $\varphi$ is algebraic, then there exists a Jordan decomposition
$\varphi=\varphi_s\varphi_u$, with $\varphi_s$ semi-simple and $\varphi_u$ unipotent and  such that $\varphi_s,\varphi_u\in
\overline{\langle \varphi\rangle}\subset \aut(D)$. By a result of Furter and Kraft (\cite[Proposition
15.9.3.]{kn:furterkraft}),
 $\varphi_s$ acts in $\A^n$ with a fixed point, and it follows from Proposition \ref{prop:identity}
that $\varphi_s$ is trivial. Hence,   $\varphi=\varphi_u$ --- that is,
$\varphi$ is unipotent. 

If $G\subset \aut(D)$ is algebraic, then  we deduce that
  $G$ is  unipotent, so it is connected, proving the second assertion.
\end{proof}

\begin{rem}
  If $D$ is a simple derivation and $\varphi\in \aut(D)$ is a locally
finite automorphism, then  applying Theorem \ref{thm:aut0unip} to
$G=\overline{
  \langle\varphi\rangle}\subset \aut(\A^n)$  we deduce that $\varphi\in\aut(D)^0$.
  \end{rem}

As a direct follow up of theorems \ref{thm:aut0unip} and
\ref{thm:aut0conn}, we have the  
following characterization of the algebraicity of the automorphisms
group of a simple derivation.
 
\begin{cor}
   If $D\in\der\bigl(\pol\bigr)$ is  simple  then the following are equivalent:

  \beginenum
\item The ind-group $\aut(D)$ is an algebraic group.

\item The ind-group $\aut(D)$ is connected.

\item Every element $ \varphi\in \aut(D)$ is algebraic.

\item Every element $ \varphi\in \aut(D)$ is a locally finite
  automorphism.

\item Every element $ \varphi\in \aut(D)$ is a unipotent locally finite
  automorphism. \qed
\end{enumerate}
\end{cor}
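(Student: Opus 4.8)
The plan is to prove the cyclic chain of implications $(1)\Rightarrow(2)\Rightarrow(3)\Rightarrow(4)\Rightarrow(5)\Rightarrow(1)$, each arrow being a one-line deduction from Theorems \ref{thm:aut0conn} and \ref{thm:aut0unip} together with the elementary facts on algebraic automorphisms recalled in Section \ref{sec:prelim} (an automorphism of $\A^n$ is algebraic if and only if it is locally finite, and it is algebraic and unipotent if and only if it is locally unipotent).

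First I would treat $(1)\Rightarrow(2)$: if $\aut(D)$ is an algebraic group, apply the second assertion of Theorem \ref{thm:aut0unip} with $G=\aut(D)$ to conclude that $\aut(D)$ is unipotent, hence connected, so $\aut(D)=\aut(D)^0$. For $(2)\Rightarrow(3)$: once $\aut(D)=\aut(D)^0$ it is an affine algebraic group by Theorem \ref{thm:aut0conn}, and then for any $\varphi\in\aut(D)$ the subgroup $\overline{\langle\varphi\rangle}$ is a closed subgroup of an algebraic group, hence algebraic, which is exactly the meaning of ``$\varphi$ is algebraic''. The implication $(3)\Rightarrow(4)$ is just the equivalence ``algebraic if and only if locally finite'' recalled in Section \ref{sec:prelim}, and $(5)\Rightarrow(4)$ is trivial; for $(4)\Rightarrow(5)$ I would note that a locally finite $\varphi$ is algebraic, so Theorem \ref{thm:aut0unip} gives that $\varphi$ is unipotent, and an algebraic unipotent automorphism is locally unipotent, i.e.\ a unipotent locally finite automorphism.

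Finally, to close the loop I would do $(5)\Rightarrow(1)$: if every $\varphi\in\aut(D)$ is locally finite then, applying Theorem \ref{thm:aut0unip} to $G=\overline{\langle\varphi\rangle}\subset\aut(\A^n)$ as in the Remark following that theorem, one gets $\varphi\in\aut(D)^0$; hence $\aut(D)=\aut(D)^0$, which is an algebraic group by Theorem \ref{thm:aut0conn}. I do not expect any genuine obstacle here: the corollary is a formal repackaging of the two preceding theorems. The only point to keep straight is the distinction between the two uses of the word ``algebraic'' in play --- an algebraic \emph{group} versus an algebraic \emph{automorphism}, the latter defined through the closure of the cyclic group it generates inside the ind-group $\aut(\A^n)$ --- and the bridge between them is supplied precisely by the Remark after Theorem \ref{thm:aut0unip}.
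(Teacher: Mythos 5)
Your proof is correct and is essentially the argument the paper intends: the corollary is stated with a \qed as a direct consequence of Theorems \ref{thm:aut0conn} and \ref{thm:aut0unip}, and your cycle of implications (using the second assertion of Theorem \ref{thm:aut0unip} for $(1)\Rightarrow(2)$, the algebraic/locally finite equivalence for $(3)\Leftrightarrow(4)$, and the remark after Theorem \ref{thm:aut0unip} to place every locally finite element in $\aut(D)^0$ for the return to $(1)$) is exactly the intended unpacking. No gaps; the care you take in distinguishing ``algebraic automorphism'' from ``algebraic group'' matches the paper's conventions.
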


\subsection{Additive subgroups of $\aut(D)$}\ %

  \begin{defi}\label{defi:globtriv}
Let $X$ be an affine algebraic variety and  $\varphi: \G_a^s\times X\to X$ be a regular action. We  say
that $\varphi$ is \emph{globally (equivariantly) trivial}, if  there
exists an affine algebraic variety $V$ and an action $\phi:\G_a^s\times (
\A^s\times V)$, $\phi\bigl(a,(t,v)\bigr)=(t+a,v)$ for all $a\in \G^s_a$, $t\in
\A^s$, $v\in V$, together with an equivariant isomorphism $X\cong \A^s\times
V$.

We say that $\varphi$ is \emph{locally  trivial} if there exists a cover
  $X=\bigcup_{i=1}^\ell U_i$ by affine open $\G_a^s$-stable subsets
  such that the restrictions $\varphi|_{_{\G_a^s\times
      U_i}}:\G_a^s\times U_i\to U_i$ are
  globally trivial actions.

Let $\G_a^s\hookrightarrow  \aut(\A^n)$ be a closed immersion. We say
that (the image of) $\G_a^s$  \emph{acts by translations} or that
$\G_a$  is
a \emph{group of translations} if the
induced action $\varphi: \G_a^s\times \A^n\to \A^n$ is globally
trivial where  $\A^n\cong \A^{s}\times \A^{n-s}$ and  $\G_a^s$ acts by
translations in the first $s$ coordinates.

A subgroup of automorphisms $\G_a^s\subset \aut\bigl(\pol\bigr)$ is said
to act in a locally trivial (resp. globally trivial, resp. by translations) way
if the induced action $\varphi: \G_a^s\times \A^n\to \A^n$ is so.
\end{defi}

\begin{rem}
  \beginenum
  \item Notice that $\G_a^s$ acts by translations on $\A^n$ if and only if
$\G_a^s$ is conjugated to a subgroup of the group of translations of
$\A^n$ (as subgroups of $\aut(\A^n)$).

 \item  If $\Delta\in \der\bigl(\pol\bigr)$ is a locally nilpotent
  derivation then  $s\in \pol$ is a slice (see Definition
  \ref{defi:coord}) if and  
  only if the canonical action of $H=\bigl\{e^{t\Delta}\mathrel{:}
  t\in\K\bigr\}\cong \G_a$  over $\A^n$ is globally trivial, where 
  $V\cong  \spec\bigl(\ker(\Delta) \bigr)$ --- this is the content of
  the slice theorem, see for example \cite[Corollary 1.26]{kn:Freudenburg}.

If moreover $s$ is a linear coordinate for $\Delta$, then the action
of $H$ is by translations

\item Let  $\varphi: \G_a^s\times X\to X$ be a  globally
  trivial action, with $X\cong \A^s\times V$. Then it is easy to show
  that   the projection  
   $      p_2: X\cong \A^s\times V\to V$ is the geometric quotient. In particular, $ \K[V]=\K[X]^{\G_a^s}$,  the subalgebra of invariants of $\G_a^s$.
\end{enumerate}
  \end{rem}

  In \cite{kn:Freudenburg}, the following characterization of the local
  triviality of a free action of $\G_a$ on $\A^n$ is given.
  
  \begin{defi}
  Let  $\Delta:\pol\to \pol$ be a locally
  nilpotent derivation with kernel $A$. We call  $\operatorname{pl}(\Delta)=A\cap
  \operatorname{Im}(\Delta)$  the  
  \emph{plinth ideal  of
    $\Delta$}.
  \end{defi}

  \begin{lem}\label{lem:FreuTriviality}
Let $\Delta$ be a locally nilpotent derivation of $\pol$ and consider $\G_a=\bigl\{e^{t\Delta}\mathrel{:} 
  t\in \kk\bigr\} \subset
  \aut(D)$  (see Remark \ref{rem:GaandLND}). Then the restricted action
  of $\G_a$ 
  is   locally trivial if and only if $\bigr\langle
  \operatorname{pl}(\Delta)\bigr\rangle_{\pol} =
  \pol$.
    \end{lem}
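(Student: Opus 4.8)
The plan is to characterize local triviality of the $\G_a$-action through the geometry of the quotient map, using the well-known correspondence between slices and global triviality together with a patching argument over the spectrum of the kernel. Write $A=\ker(\Delta)$ and let $\pi:\A^n\to Y:=\spec(A)$ be the morphism induced by the inclusion $A\hookrightarrow\pol$; since $\Delta$ is locally nilpotent, $A$ is finitely generated (this is automatic in the cases we need, or we work locally) and $\pi$ is the categorical quotient for the $\G_a=\{e^{t\Delta}\}$-action. The key local statement is the slice theorem in its sharpest form: if $a\in A$ is such that $a\in\operatorname{Im}(\Delta)$, say $\Delta(s)=a$ for some $s\in\pol$, then on the basic open set $Y_a=\spec(A_a)$ the localized derivation $\Delta_a$ on $(\pol)_a=\pol\otimes_A A_a$ admits $s/a$ as a slice, hence by \cite[Corollary 1.26]{kn:Freudenburg} the restricted $\G_a$-action over $\pi^{-1}(Y_a)$ is globally trivial, with $\pi^{-1}(Y_a)\cong \A^1\times Y_a$ equivariantly.

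First I would prove the ``if'' direction. Assume $\langle\operatorname{pl}(\Delta)\rangle_{\pol}=\pol$. Since $\operatorname{pl}(\Delta)=A\cap\operatorname{Im}(\Delta)$ is an ideal of $A$, the hypothesis says the ideal of $A$ it generates has extension to $\pol$ equal to the unit ideal; but then, writing $1=\sum g_i a_i$ with $a_i\in\operatorname{pl}(\Delta)$ and $g_i\in\pol$ and applying $\Delta$ (noting $\Delta(a_i)=0$), one sees $\sum\Delta(g_i)a_i=0$, which is not immediately a contradiction — instead the cleaner route is: the vanishing locus in $Y=\spec A$ of the ideal $\operatorname{pl}(\Delta)\subset A$ has empty preimage in $\A^n$ under $\pi$, and since $\pi$ is surjective onto its image, this forces $V_Y(\operatorname{pl}(\Delta))=\emptyset$, i.e.\ $\operatorname{pl}(\Delta)=A$ is already the unit ideal of $A$. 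Hence there exist $a_1,\dots,a_\ell\in\operatorname{pl}(\Delta)$ generating the unit ideal of $A$, so $Y=\bigcup Y_{a_i}$; by the previous paragraph the action over each $\pi^{-1}(Y_{a_i})$ is globally trivial, and these form an affine open $\G_a$-stable cover of $\A^n$, giving local triviality.

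For the ``only if'' direction, suppose the action is locally trivial with cover $\A^n=\bigcup U_j$ by $\G_a$-stable affine opens on which the action is globally trivial. Global triviality on $U_j$ means $\K[U_j]\cong \K[U_j]^{\G_a}[s_j]$ with $\Delta(s_j)=1$ on $U_j$; the point is to descend the ``constant $1$'' in the image of $\Delta|_{U_j}$ back to an element of $\operatorname{pl}(\Delta)$ whose image avoids a prescribed point of $Y$. Concretely, each $U_j$ refines to a basic open $\spec(\pol)_{f_j}$ for some $f_j\in A$ (one can take the $\G_a$-stable opens to be pullbacks from $Y$, because the quotient of a $\G_a$-stable affine open is affine open in $Y$ and the map is the quotient), and on $\spec(\pol)_{f_j}$ the existence of a slice gives some $s_j\in\pol$ with $\Delta(s_j)=f_j^{m_j}$ for suitable $m_j$, so $f_j^{m_j}\in\operatorname{pl}(\Delta)$; since the $U_j$ cover $\A^n$, the $f_j$ (hence their powers) generate the unit ideal of $A$, whence $\operatorname{pl}(\Delta)$ generates the unit ideal of $A$, a fortiori $\langle\operatorname{pl}(\Delta)\rangle_{\pol}=\pol$.

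The main obstacle, and the point requiring the most care, is the reduction of an arbitrary $\G_a$-stable affine open cover to one pulled back from $Y=\spec A$: one needs that a $\G_a$-stable affine open $U\subset\A^n$ has $\K[U]^{\G_a}$ of finite type, that $U\to\spec(\K[U]^{\G_a})$ is again the quotient, and that $\spec(\K[U]^{\G_a})$ is an open subset of $Y$ onto which $U$ surjects — this uses finite generation of $A$ (Zariski/Weitzenböck-type arguments or the slice-theoretic structure) and the fact that $\G_a$-quotients of the affine space are nice. Once that structural fact is in place, the equivalence follows by combining the slice theorem locally with the observation that $\operatorname{pl}(\Delta)$ measures exactly where a local slice can be found; I expect the paper to cite \cite[Corollary 1.26]{kn:Freudenburg} and the standard finiteness results for the kernel, and to phrase the descent in terms of the plinth ideal directly.
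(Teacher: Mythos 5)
The paper itself does not prove this lemma (its ``proof'' is the citation to \cite[p.~34]{kn:Freudenburg}), so your attempt is a reconstruction of the standard argument; unfortunately the reconstruction has a genuine gap in the ``if'' direction. From $\bigl\langle \operatorname{pl}(\Delta)\bigr\rangle_{\pol}=\pol$ you pass to $Y=\spec(A)$, $A=\ker(\Delta)$, note that $\pi^{-1}\bigl(V_Y(\operatorname{pl}(\Delta))\bigr)=\emptyset$, and conclude $V_Y(\operatorname{pl}(\Delta))=\emptyset$, i.e.\ $\operatorname{pl}(\Delta)=A$. That inference needs the quotient morphism $\pi:\A^n\to\spec(A)$ to be surjective, which is false in general: the image of $\pi$ is only a dense constructible set and can miss closed points (Winkelmann's free $\G_a$-action on $\A^4$ already has a non-surjective quotient morphism), and for $n\geq 5$ the kernel $A$ need not even be finitely generated, contrary to your parenthetical remark. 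Worse, the conclusion you extract, $\operatorname{pl}(\Delta)=A$, means $1\in A\cap\operatorname{Im}(\Delta)$, i.e.\ $\Delta$ has a slice, i.e.\ the action is \emph{globally} trivial; if your step were valid the lemma would assert ``locally trivial $\Leftrightarrow$ globally trivial'', which is far stronger than what is being proved and is not available. The detour through $Y$ is also unnecessary: from $1=\sum_i g_i a_i$ with $g_i\in\pol$ and $a_i=\Delta(s_i)\in\operatorname{pl}(\Delta)$, the principal open sets $\{a_i\neq 0\}$ are $\G_a$-stable (the $a_i$ are invariants), they cover $\A^n$ because the $a_i$ have no common zero, and on each of them $s_i/a_i$ is a slice of the localized derivation, so \cite[Corollary 1.26]{kn:Freudenburg} gives global triviality there. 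That is the whole ``if'' direction; no statement about ideals of $A$ is needed.

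In the ``only if'' direction you make the symmetric slip: covering $\A^n$ only gives, via the Nullstellensatz in $\pol$, that the $f_j^{m_j}\in\operatorname{pl}(\Delta)$ generate the unit ideal of $\pol$ --- not of $A$, since a maximal ideal of $A$ containing all $f_j$ need not lie in the image of $\pi$. Fortunately the $\pol$-level statement is exactly what the lemma asks for, so you should simply delete the claim about $A$. Also, the reduction of an arbitrary $\G_a$-stable affine cover to principal opens $\{f_j\neq 0\}$ with $f_j\in A$ is asserted rather than proved (your suggestion that stable affine opens are pullbacks from $Y$ with affine open image is unjustified and not needed): the honest argument is that an affine open subset of $\A^n$ has pure codimension-one complement, that $\pol$ is factorial so this complement is $V(f_j)$ and $U_j=\{f_j\neq0\}$, and that $\G_a$-stability makes $f_j$ a Darboux polynomial of the locally nilpotent derivation $\Delta$, hence $f_j\in\ker(\Delta)=A$; then a slice $s_j=b_j/f_j^{m_j}$ on $U_j$ gives $\Delta(b_j)=f_j^{m_j}\in\operatorname{pl}(\Delta)$ as you say, and the conclusion follows.
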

    \begin{proof}
See \cite[p. 34]{kn:Freudenburg}.
      \end{proof}

If $\Delta$ is a locally nilpotent derivation that commutes with a
simple derivation, the characterization of the local triviality of the
action of  the additive group $\{e^{t\Delta}\mathrel{:} t\in\K\}$
takes the following form:

\begin{pro}\label{prop_locally}
  Let $D\in\der\bigl(\pol\bigr)$ be a simple derivation and assume that there exists a closed
  immersion $
\G_a  \hookrightarrow \aut(D)$. Then the induced action of $\G_a$ on $\A^n$   is
  locally trivial. In particular, the action is proper --- that is,
  the orbit map $\varphi:\G_a\times \A^n\to \A^n\times \A^n$,
  $\varphi(g,x)= \bigr(x,g(x)\bigr)$ is a proper morphism.
  \end{pro}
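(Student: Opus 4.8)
The plan is to derive local triviality from Lemma~\ref{lem:FreuTriviality} by exploiting the simplicity of $D$ to force the plinth ideal of the locally nilpotent derivation attached to the given copy of $\G_a$ to generate all of $\pol$; properness will then follow formally. First I would translate the hypothesis: by the correspondence between $\G_a$-actions on $\A^n$ and locally nilpotent derivations (Remark~\ref{rem:GaandLND}), the closed immersion $\G_a\hookrightarrow\aut(D)$ has the form $\G_a=\bigl\{e^{t\Delta}\mathrel{:}t\in\K\bigr\}$ for a unique nonzero locally nilpotent $\Delta\in\der\bigl(\pol\bigr)$, and differentiating $e^{t\Delta}\circ D\circ e^{-t\Delta}=D$ at $t=0$ gives $[\Delta,D]=0$. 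Put $A=\ker\Delta$ and $\operatorname{pl}(\Delta)=A\cap\operatorname{Im}\Delta$.

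The heart of the argument is the equality $\bigl\langle\operatorname{pl}(\Delta)\bigr\rangle_{\pol}=\pol$. Since $D$ commutes with $\Delta$, it carries $\ker\Delta$ into $\ker\Delta$ and $\operatorname{Im}\Delta$ into $\operatorname{Im}\Delta$, so $D\bigl(\operatorname{pl}(\Delta)\bigr)\subseteq\operatorname{pl}(\Delta)$; as $D$ is a derivation, the $\pol$-ideal generated by a $D$-stable subset is again $D$-stable, hence $\bigl\langle\operatorname{pl}(\Delta)\bigr\rangle_{\pol}$ is a $D$-stable ideal of $\pol$. It is nonzero: choosing $g\in\pol$ with $\Delta g\neq0$ (possible since $\Delta\neq0$) and letting $k\geq1$ be maximal with $\Delta^{k}g\neq0$ (possible since $\Delta$ is locally nilpotent), the nonzero element $\Delta^{k}g=\Delta(\Delta^{k-1}g)$ lies in $\operatorname{Im}\Delta$ and, by maximality of $k$, also in $\ker\Delta$, hence in $\operatorname{pl}(\Delta)$. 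Since $D$ is simple its only $D$-stable ideals are the trivial ones $0$ and $\pol$, so $\bigl\langle\operatorname{pl}(\Delta)\bigr\rangle_{\pol}=\pol$, and Lemma~\ref{lem:FreuTriviality} gives that the action of $\G_a$ on $\A^n$ is locally trivial.

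For the ``in particular'' clause I would show that a locally trivial $\G_a$-action is always proper. Such an action admits a geometric quotient $q\colon\A^n\to Q$ (glue the local quotients $p_2\colon U_i\cong\A^1\times V_i\to V_i$ over the trivializing cover; this is legitimate because the $U_i$ are $\G_a$-stable), and $q$ is a Zariski-locally-trivial $\G_a$-torsor, the transition maps being translations by regular functions on the overlaps. The scheme $Q$ is separated: one has $\mathcal{O}(Q)=\pol^{\G_a}=\ker\Delta$, and fixing a finite generating family $p_1,\dots,p_m$ of $\operatorname{pl}(\Delta)$, each $p_l\in\ker\Delta$ makes $\{p_l\neq0\}\subseteq\A^n$ a $\G_a$-stable open set with quotient $\spec\bigl((\ker\Delta)_{p_l}\bigr)$, so $Q$ is identified with the open subscheme $\bigcup_l\{p_l\neq0\}$ of the affine scheme $\spec(\ker\Delta)$ --- and open subschemes of affine schemes are separated. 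Finally, for a $\G_a$-torsor over a separated base the orbit map $\varphi\colon\G_a\times\A^n\to\A^n\times\A^n$, $\varphi(g,x)=\bigl(x,g(x)\bigr)$, is the composite of the isomorphism $\G_a\times\A^n\xrightarrow{\ \sim\ }\A^n\times_Q\A^n$ with the closed immersion $\A^n\times_Q\A^n\hookrightarrow\A^n\times\A^n$ obtained by base change from the diagonal of $Q$; thus $\varphi$ is a closed immersion, in particular a proper morphism.

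I expect the only genuinely substantive point to be the equality $\bigl\langle\operatorname{pl}(\Delta)\bigr\rangle_{\pol}=\pol$ --- but once one observes that commuting with $D$ turns $\bigl\langle\operatorname{pl}(\Delta)\bigr\rangle_{\pol}$ into a $D$-stable ideal, it drops out of simplicity. The step that I think demands the most care in writing up is the properness part, specifically the separatedness of the quotient $Q$, i.e.\ recognizing $Q$ as a quasi-affine scheme; one could alternatively invoke the corresponding general fact for locally trivial unipotent actions and keep this step brief.
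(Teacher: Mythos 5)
Your proof of the main assertion is the same as the paper's: identify the closed immersion with $\{e^{t\Delta}:t\in\K\}$ for a locally nilpotent $\Delta$ commuting with $D$, observe that $D$ preserves $\ker\Delta$ and $\operatorname{Im}\Delta$, hence that $\bigl\langle\operatorname{pl}(\Delta)\bigr\rangle_{\pol}$ is a $D$-stable ideal, and conclude $\bigl\langle\operatorname{pl}(\Delta)\bigr\rangle_{\pol}=\pol$ by simplicity, so Lemma~\ref{lem:FreuTriviality} applies; you even supply the (omitted but standard) verification that $\operatorname{pl}(\Delta)\neq 0$. Where you genuinely diverge is the ``in particular'' clause: the paper simply cites Deveney--Finston--Gehrke (and \cite[Theorem 3.37]{kn:Freudenburg}), whereas you reprove that result, showing the quotient is an open subscheme of $\spec(\ker\Delta)$, hence separated, so the orbit map is the base change of the diagonal composed with an isomorphism onto $\A^n\times_Q\A^n$ and is therefore a closed immersion. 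That argument is correct and makes the citation self-contained; the only wording to fix is ``a finite generating family of $\operatorname{pl}(\Delta)$'': since $\ker\Delta$ need not be Noetherian, $\operatorname{pl}(\Delta)$ may fail to be finitely generated as an ideal of $\ker\Delta$, but this is harmless --- from $\bigl\langle\operatorname{pl}(\Delta)\bigr\rangle_{\pol}=\pol$ one extracts finitely many $p_1,\dots,p_m\in\operatorname{pl}(\Delta)$ with $\langle p_1,\dots,p_m\rangle_{\pol}=\pol$, and these already give the trivializing cover $\{p_l\neq 0\}$ used in your separatedness argument.
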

\begin{proof}
  Let $\Delta 
  \in \der\bigl(\pol\bigr)$ be such that $e^\Delta$ is a generator of (the image of) $\G_a$ and write
  $A=\ker(\Delta)$. Then $e^{t\Delta}D=De^{t\Delta}$ for all $t\in\K$,
  and therefore $[D,\Delta]=0$. Hence,
  $D(A)\subset A$ and it follows that the  plinth ideal
  $\operatorname{pl}(\Delta)\subset 
  A$ is $D$-stable so  $\bigr\langle
  \operatorname{pl}(\Delta)\bigr\rangle_{\pol}$ is also $D$-stable. We
  conclude by Lemma \ref{lem:FreuTriviality} and the simplicity of
  $D$.

  Finally, the   properness of the action follows from \cite{kn:DeFiGe94} (see
  also \cite[Theorem 3.37]{kn:Freudenburg}).
 \end{proof}

We will use   the previous  characterization later in order to study the subgroup of
translations of the isotropy group of a simple derivation, see Lemma \ref{lem:TransDim4}.

\section{Automorphisms of simple derivations : dimension of
  $\aut(D)^0$}
\label{sec:simpleforunip}

We begin this section by dealing with the special case where $D\in\der\bigl(\pol\bigr)$ is simple and $\aut(D)$ contains a subgroup acting in a globally trivial way or by translations.

\begin{pro}\label{prop:tras0}
	Let $D\in\der\bigl(\pol\bigr)$ be a simple derivation and  $H\subset \aut (D)$
	be a (closed) $s$-dimensional subgroup, $H\cong \G_a^s$,  acting in a globally
        trivial way.
        If $B=\pol^H$, then  $\operatorname{Im}D\subset B$.  Moreover, the restriction
        $\overline{D}=D|_{_B}:B\to B$ is a simple derivation, and if
        $\{f_1,\dots , f_\ell\}$ is a generating set of the
        $\K$-algebra $B$, then   $\bigl\langle D(f_1),\dots, D(f_\ell)\bigr\rangle_B= B$ and
$\bigl\langle D(f_1),\dots, D(f_\ell)\bigr\rangle_\pol=\pol$ unless
$B=\K$ --- that is $s=n$. 
      \end{pro}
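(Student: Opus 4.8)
The plan is to exploit the hypothesis that $H\cong\G_a^s$ acts in a globally trivial way, so that $\A^n\cong\A^s\times V$ with $V\cong\spec(B)$, $B=\pol^H$, and $H$ acting by translations on the $\A^s$-factor. Pick coordinates $x_1,\dots,x_n$ adapted to this product so that $B=\K[x_{s+1},\dots,x_n]$ and the Lie algebra of $H$ is spanned by $\partial/\partial x_1,\dots,\partial/\partial x_s$ (this uses the last remark before Proposition \ref{prop_locally}, namely that a globally trivial action is, up to equivariant isomorphism, translation in the first $s$ coordinates with quotient $V$). First I would observe that for each generator $\Delta_j=\partial/\partial x_j$ of the Lie algebra of $H$ (here $j\le s$), the commutation $[D,\Delta_j]=0$ holds, exactly as in the proof of Proposition \ref{prop_locally}; hence $D$ preserves $\bigcap_{j\le s}\ker\Delta_j=B$, i.e. $D(B)\subset B$ and, applied to $D(x_i)$ for $i\le s$, also gives $D(x_i)\in B$. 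Writing $D=\sum_i a_i\,\partial/\partial x_i$, this says $a_i\in B$ for all $i$, which is precisely $\operatorname{Im}D\subset B$ (since $\operatorname{Im}D$ is generated as a $\pol$-module, or even just additively together with the Leibniz rule, by the $a_i$; more carefully, $D(f)=\sum a_i\,\partial f/\partial x_i$ and each $a_i\in B$, so the image lands in $B$).

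Next I would check that $\overline D=D|_B$ is a derivation of $B$ — immediate, since $D(B)\subset B$ — and that it is simple. For simplicity: let $I\subset B$ be a nonzero $\overline D$-stable ideal; then $I\pol$ is an ideal of $\pol$, and it is $D$-stable because $D(fg)=D(f)g+fD(g)$ with $D(f)\in I$ for $f\in I$ (using $D|_B=\overline D$) and $D(g)\in\pol$; since $D$ is simple, $I\pol=\pol$, and contracting back, $1\in I\pol\cap B$. Here I need the faithful-flatness / retraction $\pol=B[x_1,\dots,x_s]$, which gives $I\pol\cap B=I$ (a polynomial identity $1=\sum p_\alpha m_\alpha$ with $p_\alpha\in I$, $m_\alpha$ monomials in $x_1,\dots,x_s$: comparing the constant term in $x_1,\dots,x_s$ shows $1\in I$). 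Hence $I=B$ and $\overline D$ is simple. By Remark \ref{rem:restsim}(2) applied to $\overline D$ on $B$, if $\{f_1,\dots,f_\ell\}$ generates $B$ then $\langle\overline D(f_1),\dots,\overline D(f_\ell)\rangle_B=B$, i.e. $\langle D(f_1),\dots,D(f_\ell)\rangle_B=B$.

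For the last claim, extend scalars: $\langle D(f_1),\dots,D(f_\ell)\rangle_\pol=\bigl(\langle D(f_1),\dots,D(f_\ell)\rangle_B\bigr)\pol=B\pol=\pol$, again by flatness of $\pol$ over $B$ — or simply because $1$ already lies in the $B$-ideal hence a fortiori in the $\pol$-ideal. The caveat ``unless $B=\K$, that is $s=n$'' is just the degenerate case where $H$ acts transitively on $\A^n$ with trivial quotient, so $\overline D$ is a simple derivation of $\K$ (vacuously) and there is nothing about generators to say; $s=n$ follows since $\dim V=n-s$ and $V$ is a point iff $B=\K$.

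The main obstacle I anticipate is not conceptual but bookkeeping: making precise the identification of the Lie algebra of the globally trivial $H$ with coordinate vector fields $\partial/\partial x_j$ and, correspondingly, that $\pol^H=\K[x_{s+1},\dots,x_n]$ is a polynomial subring over which $\pol$ is free — so that the contraction arguments $I\pol\cap B=I$ go through cleanly. Everything else (the commutator computation forcing $a_i\in B$, the transfer of simplicity, and Remark \ref{rem:restsim}(2)) is routine once that product structure is in hand.
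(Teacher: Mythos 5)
Your overall route is the paper's: pass to the equivariant product $\A^n\cong\A^s\times V$, so $\pol=B[y_1,\dots,y_s]$, use $[D,\Delta]=0$ for $\Delta\in\operatorname{Lie}(H)$ to control $D$ on $B$ and on the fibre coordinates, transfer simplicity from $\pol$ to $B$ by extending a $\overline D$-stable ideal and contracting, and finish with the stability of the ideal generated by the $D(f_j)$. Your contraction step ($I\pol\cap B=I$, using that $\pol$ is free over $B$ with monomial basis) is exactly the justification the paper leaves implicit when it calls the simplicity of $\overline D$ clear, and it is correct. But two steps as written are genuine slips.

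First, you read ``globally trivial'' as ``acts by translations.'' Global triviality gives $\A^n\cong\A^s\times V$ with $V$ an affine variety; it does not give $V\cong\A^{n-s}$, so you cannot take $B=\K[x_{s+1},\dots,x_n]$ (that is a cancellation-type assertion, and the proposition is needed before Corollary \ref{cor:tras} specializes to translations). This matters where you invoke Remark \ref{rem:restsim}(2): that remark is about the coefficients of a derivation with respect to coordinates of a polynomial ring, and does not apply literally to an arbitrary generating set $\{f_1,\dots,f_\ell\}$ of a possibly non-polynomial $B$ (even for $B=\K[x]$ with generators $x,x^2$ it says nothing directly). The paper argues instead that $\bigl\langle \overline D(f_1),\dots,\overline D(f_\ell)\bigr\rangle_B$ is $\overline D$-stable, because $\overline D\bigl(P(f_1,\dots,f_\ell)\bigr)=\sum_j(\partial_j P)(f_1,\dots,f_\ell)\,\overline D(f_j)$, hence by simplicity it is $0$ or $B$, and it is $0$ only if every $f_j\in\ker D=\K$, i.e.\ $B=\K$. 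Your contraction argument survives verbatim with $B=\K[V]$ general, since it only uses freeness of $\pol$ over $B$.

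Second, the bridge ``each $a_i\in B$, so $\operatorname{Im}D\subset B$'' is a non sequitur: in $D(f)=\sum_i a_i\,\partial f/\partial x_i$ the partial derivatives are not in $B$, and the literal inclusion $\operatorname{Im}D\subset B$ does fail in general --- already for the derivations of Example \ref{exa_maximal} one has $D_I(ux_1)=x_1+u\,b_1(v)\notin\K[u,v]=B$. (The paper's own wording of this clause, and its one-line proof of it, suffer from the same defect; what is true, what your commutator computation actually proves, and what is used later in Corollary \ref{cor:tras}, is that $D(B)\subset B$ and $D(y_j)\in B$, i.e.\ all coefficients of $D$ in the product coordinates lie in $B$.) So keep the commutator argument, drop that parenthetical inference, and run the whole proof with $B=\K[V]$ not assumed polynomial; with those repairs your argument coincides with the paper's.
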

        \begin{proof}
          By the global triviality of the $\G_a^s$-action, there exists a
          $\G_a^s$-equivariant isomorphism  $\A^n\cong \A^s\times
          \spec(B)$ and therefore $\pol=B[y_1,\dots,y_s]$, with
          $y_1,\ldots, y_s$ algebraically independent over $B$. If 
          $p\in\pol$ and  $\Delta\in \operatorname{Lie}(H)\subset \der\bigl(\pol\bigr)$, then
          $\Delta\bigl(D(p)\bigr)=D\bigl(\Delta(p)\bigr)=0$. We deduce
          that
          \[
            D(p)\in \bigcap_{\Delta\in\operatorname{Lie}(H)}
          \ker(\Delta)=\pol^H=B.  
          \]
Finally, it is clear that the restriction $\overline{D}:B\to B$ must be a simple derivation and,  since  $I=\bigl\langle D(f_1),\dots, D(f_\ell)\bigr\rangle_B$
is  a $\overline{D}$-stable ideal, it is either $0$ or $B$. But
$D(f_i)=0$ implies that $f_i$ is a constant so the result follows.
\end{proof}

\begin{rem}
We will show in Theorem \ref{thm:dimaut0} that, in the notations of Proposition
\ref{prop:tras0},  $s=\dim H\leq n-2$, so  $\pol^H\neq \K$.
\end{rem}

\begin{cor}\label{cor:tras}
	Let $D\in\der\bigl(\pol\bigr)$ be a simple derivation and
        $H\subset \aut (D)$ 
	be a (closed) $s$-dimensional subgroup of translations. Then  there
	exist coordinates such  
	that $D=\sum_i a_i\frac{\partial}{\partial x_i}$, with $a_i\in
	\kk[x_{s+1},\dots, x_n]$.  Moreover, $\bigl\langle a_{s+1},\dots
	,a_n\bigr\rangle_{\Bbbk[x_{s+1},\dots ,x_n]}=\Bbbk[x_{s+1},\dots
	 ,x_n]$ and therefore $\bigl\langle a_{s+1},\dots
	,a_n\bigr\rangle_{\Bbbk[x_{1},\dots ,x_n]}=\Bbbk[x_{1},\dots
	 ,x_n]$.
  \end{cor}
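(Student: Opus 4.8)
The statement is meant to follow at once from Proposition~\ref{prop:tras0}, so the plan is to fix a coordinate system adapted to the translation subgroup $H$ and then simply read off the two conclusions.

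First I would use Definition~\ref{defi:globtriv} --- more precisely the observation that $\G_a^s$ acts by translations exactly when it is conjugate inside $\aut(\A^n)$ to a subgroup of the translation group --- to choose coordinates $x_1,\dots,x_n$ in which $H\cong\G_a^s$ acts by $(t_1,\dots,t_s)\cdot x=(x_1+t_1,\dots,x_s+t_s,x_{s+1},\dots,x_n)$. Conjugating $D$ by the same automorphism produces a simple derivation, which I keep denoting $D$, commuting with this $H$; since ``there exist coordinates such that'' is precisely what is being claimed, this reduction is harmless. In these coordinates the $\G_a^s$-action is globally trivial, $\A^n\cong\A^s\times\spec(B)$ with $B=\pol^H=\K[x_{s+1},\dots,x_n]$, so Proposition~\ref{prop:tras0} applies verbatim.

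Next I would invoke Proposition~\ref{prop:tras0}: it gives $\operatorname{Im}D\subset B$, hence each $a_i:=D(x_i)$ lies in $\K[x_{s+1},\dots,x_n]$, which is the first assertion. Applying the last part of the proposition to the generating set $\{x_{s+1},\dots,x_n\}$ of the $\K$-algebra $B$, and using $D(x_j)=a_j$ for $j>s$, I would get $\la a_{s+1},\dots,a_n\ra_{B}=B$. Finally, writing out an identity $\sum_{j=s+1}^{n}b_ja_j=1$ with $b_j\in B\subset\pol$ and reading it inside $\pol$ gives $1\in\la a_{s+1},\dots,a_n\ra_{\pol}$, i.e.\ $\la a_{s+1},\dots,a_n\ra_{\pol}=\pol$.

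I do not expect a genuine obstacle here; the argument is a routine unwinding of the hypothesis through Proposition~\ref{prop:tras0}. The only points deserving a sentence are the existence of the translation-adapted coordinate system (which is exactly the definition being used) and checking that one is not in the excluded case $B=\K$, i.e.\ $s=n$ --- which cannot happen for $n\ge2$, since then every $D(x_i)$ would be constant and $\ker D$ would contain a nonconstant linear form, contradicting simplicity (and in any event $s\le n-2$ by Theorem~\ref{thm:dimaut0}).
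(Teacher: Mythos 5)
Your proof is correct and follows essentially the same route as the paper, whose proof is the one-line observation that $\pol^H$ is a polynomial ring so that Proposition~\ref{prop:tras0} applies directly; your version merely spells out the translation-adapted coordinate change and the passage from $\bigl\langle a_{s+1},\dots,a_n\bigr\rangle_B=B$ to the same identity in $\pol$. One small caution: the parenthetical appeal to Theorem~\ref{thm:dimaut0} to rule out $s=n$ would be circular (that theorem's proof uses this corollary), but this is harmless since your direct argument --- that $s=n$ forces $\operatorname{Im}D\subset\K$ and hence a nonconstant element of $\ker D$, contradicting simplicity --- already settles the point.
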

  \begin{proof}
In this case,  $\pol^H$ is a  polynomial ring and result follows directly
from  Proposition \ref{prop:tras0}.
    \end{proof}

 In 
 \cite[theorem 2]{kn:DvEFM},  the authors describe the structure of
 the action of an unipotent group of dimension $n-1$ on an affine
 variety of dimension $n$.   We present here an adaptation to our
 special  case where $X$ is an affine space: 

 \begin{thm}\label{thm:derksenetalorig}
   Let $U$ be a unipotent group  of dimension $n-1$ acting freely on $\A^n$. Then  $\A^{n}$ is $U$-isomorphic to $U\times
  \kk$. \qed 
\end{thm}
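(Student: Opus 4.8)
The plan is to read off the statement from the general structure theorem \cite[Theorem~2]{kn:DvEFM} and then to check that, in the case $X=\A^n$, the one-dimensional quotient is the affine line. First I would note that, $U$ being unipotent with $\dim U=n-1$, all orbits of the free action are $(n-1)$-dimensional and closed (orbits of unipotent groups on affine varieties are closed, by Kostant--Rosenlicht), so that $\A^n$ is partitioned into closed copies of $U$. Applying \cite[Theorem~2]{kn:DvEFM} with $X=\A^n$ then yields a geometric quotient $\pi\colon\A^n\to C:=\A^n/U$ in which $C$ is a smooth affine curve with $\mathcal O(C)=\pol^{U}$ (finitely generated, of Krull dimension $1$), and $\pi$ is a principal $U$-bundle, i.e.\ the map $U\times\A^n\to\A^n\times_{C}\A^n$, $(u,x)\mapsto(x,ux)$, is an isomorphism. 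Since $U$ admits a normal series with successive quotients isomorphic to $\G_a$ and $H^1(Y,\mathcal O_Y)=0$ for every affine scheme $Y$, a d\'evissage along that series shows that every $U$-torsor over the affine curve $C$ is trivial; hence $\pi$ splits $U$-equivariantly and $\A^n\cong U\times C$, with $U$ acting by left translations on the first factor and trivially on $C$.

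It then remains to show $C\cong\kk$, and this is the step where the hypothesis $X=\A^n$ is actually used. The morphism $\pi$ is dominant, so $\kk(C)$ is a subfield of $\kk(x_1,\dots,x_n)$ of transcendence degree $1$ over $\kk$; thus $C$ is unirational, and by L\"uroth's theorem its smooth projective model is $\P^1$, so $C\cong\P^1\setminus\{p_1,\dots,p_m\}$ with $m\ge 1$ because $C$ is affine. On the other hand, any unit of $\mathcal O(C)=\pol^{U}$ is in particular a unit of $\pol$, hence a nonzero constant, so $\mathcal O(C)^{\times}=\kk^{\times}$. If $m\ge 2$, there would be a non-constant rational function on $\P^1$ whose divisor is supported on $\{p_1,\dots,p_m\}$, that is, a non-constant invertible element of $\mathcal O(C)$ --- impossible. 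Hence $m=1$, $C\cong\A^1=\kk$, and we conclude $\A^n\cong_{U}U\times\kk$.

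The genuinely substantive ingredient, which I would not attempt to reprove, is \cite[Theorem~2]{kn:DvEFM} itself: that a free action of an $(n-1)$-dimensional unipotent group on an $n$-dimensional affine variety admits a geometric quotient which is a finite-type affine curve over which the quotient morphism is a principal bundle --- the delicate points there being finite generation of $\pol^{U}$ and the local triviality of $\pi$. Once these are available, trivializing the torsor over the affine base and recognizing the base curve as $\kk$ (via unirationality together with the observation that $\pol^{U}$ has only constant units) are both routine, so the only alternative to citing \cite{kn:DvEFM} would be to rerun the quotient construction by hand for a free unipotent action of codimension one, which is essentially the same work.
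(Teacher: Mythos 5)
Your proof is correct and takes essentially the same approach as the paper: the paper gives no argument beyond presenting the statement as an adaptation of \cite[Theorem 2]{kn:DvEFM} to the case $X=\A^n$, and you likewise cite that theorem for the substantive content. The extra details you supply --- trivializing the $U$-torsor over the affine base by d\'evissage through $\G_a$'s, and identifying the quotient curve with $\kk$ via L\"uroth together with the fact that units of $\pol^U\subset\pol$ are constant --- are precisely the routine ``adaptation'' the paper leaves implicit, and they are correct.
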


Using
Lie-Kolchin Theorem and the well known identification of
$\operatorname{Lie}(U)$ as a sub-Lie algebra of 
$\operatorname{Der}\bigl(\pol\bigr)$  (see Remark
\ref{rem:GaandLND}),  in \cite[proposition 1]{kn:DvEFM} the authors
give the following nice description:  

\begin{pro}\label{pro:derksenetalorig}
Let $U$ be a unipotent group of dimension $n$ acting on $\A^n$ such that there exists $x\in\A^n$ with $U_x=\{\Id\}$. Then, upon
conjugation with  an automorphism,  there exists a basis $\Delta_1,\ldots,\Delta_n$ of the Lie algebra $\operatorname{Lie}(U)$ such that $\Delta_i(x_j)=\delta_{ij}$ for $1\leq i\leq j\leq n$. 
\qed
\end{pro}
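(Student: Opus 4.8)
I would prove this by induction on $n$, at each stage peeling off a one‑dimensional central subgroup of $U$.

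\emph{Preliminary reduction.} First I would note that, under the hypothesis, $U$ acts \emph{freely and transitively} on $\A^n$: the orbit of the point $x$ with $U_x=\{\Id\}$ has dimension $\dim U=n$, and orbits of unipotent groups on affine varieties are closed (Kostant--Rosenlicht), so by irreducibility $U\cdot x=\A^n$; translating $x$, every stabilizer is trivial. Since we are in characteristic zero the orbit map $U\to\A^n$, $u\mapsto u\cdot x$, is an isomorphism, so I may identify $\A^n$ with $U$ acting on itself by left translations. This settles the base case $n=1$: in the coordinate $x_1$ on $U\cong\G_a$ one has $\operatorname{Lie}(U)=\K\,\partial/\partial x_1$, hence $\Delta_1(x_1)=1=\delta_{11}$.

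\emph{Inductive step, first part.} Assuming the result for $n-1$, I would choose a one‑dimensional closed subgroup $U_1\cong\G_a$ inside the (nontrivial, positive‑dimensional) center of $U$; being central it is normal, and $\bar U:=U/U_1$ is an affine unipotent group of dimension $n-1$, isomorphic as a variety to $\A^{n-1}$. Via the identification $\A^n=U$, the quotient morphism $q\colon\A^n\to\bar U$ realizes $\bar U$ acting on $\A^n/U_1=\bar U$ by left translations, freely and transitively, so the induction hypothesis applies: after conjugating by an automorphism of $\bar U\cong\A^{n-1}$ there are coordinates $x_2,\dots,x_n$ on $\bar U$ and a basis $\bar\Delta_2,\dots,\bar\Delta_n$ of $\operatorname{Lie}(\bar U)$ with $\bar\Delta_i(x_j)=\delta_{ij}$ for $2\le i\le j\le n$. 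I then pull the $x_j$ back to $\A^n$ along $q$ (keeping their names); they are $U_1$‑invariant and generate the polynomial ring $\K[\bar U]$.

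\emph{Inductive step, second part: the coordinate $x_1$.} To complete the coordinate system I need a function transverse to the $U_1$‑action. Here $q$ is a torsor under $\G_a\cong U_1$, which is Zariski‑locally trivial ($\G_a$ is a special group) and hence globally trivial over the affine base $\bar U$ since $H^1(\bar U,\calo_{\bar U})=0$; equivalently, the generator $\Delta_1$ of $\operatorname{Lie}(U_1)\subset\der\bigl(\pol\bigr)$ admits a slice $x_1$, $\Delta_1(x_1)=1$. The trivialization gives $\K[\A^n]=\K[\bar U][x_1]=\K[x_1,\dots,x_n]$, so $x_1,\dots,x_n$ is a genuine coordinate system and the resulting change of coordinates is the automorphism in whose terms the conclusion is phrased; moreover $\Delta_1(x_j)=0$ for $j\ge 2$ since $x_2,\dots,x_n$ are $U_1$‑invariant.

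\emph{Conclusion and main obstacle.} Finally I would lift $\bar\Delta_2,\dots,\bar\Delta_n$ to $\Delta_2,\dots,\Delta_n\in\operatorname{Lie}(U)$ along $\operatorname{Lie}(U)\to\operatorname{Lie}(\bar U)$, whose kernel is $\operatorname{Lie}(U_1)=\K\Delta_1$; then $\Delta_1,\dots,\Delta_n$ is a basis of $\operatorname{Lie}(U)$. For $2\le i\le j\le n$, $\bar U$‑equivariance of $q$ gives $\Delta_i(x_j)=\Delta_i(q^*x_j)=q^*\bigl(\bar\Delta_i(x_j)\bigr)=\delta_{ij}$, while $\Delta_1(x_j)=\delta_{1j}$ for every $j$ by the previous paragraph; hence $\Delta_i(x_j)=\delta_{ij}$ for all $1\le i\le j\le n$, as desired. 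The step I expect to be the crux is the triviality of the $\G_a$‑torsor $q$ (equivalently, the existence of the slice $x_1$): this is what turns the $x_j$ into polynomial coordinates and so makes the ``conjugation by an automorphism'' legitimate, and it is the only place where anything beyond routine nilpotent‑group bookkeeping and the compatibility $\Delta_i(q^*f)=q^*(\bar\Delta_i f)$ (coming from equivariance of $q$) is needed.
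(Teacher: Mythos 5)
Your proof is correct. Bear in mind, however, that the paper offers no proof of this proposition: it is quoted, with a \qed, from \cite[Proposition 1]{kn:DvEFM}, the only indication given being that the argument there rests on the Lie--Kolchin theorem together with the identification of $\operatorname{Lie}(U)$ with a sub-Lie algebra of $\der\bigl(\pol\bigr)$. Your induction is a sound, self-contained substitute that reorganizes the same ingredients: where the cited proof gets the triangular basis from Lie--Kolchin (a full flag of closed normal subgroups of $U$ with $\G_a$ quotients, and coordinates adapted to it in one pass), you peel off a single central copy of $\G_a$ at a time and obtain the missing coordinate from the triviality of $\G_a$-torsors over the affine base ($\G_a$ special and $H^1(\A^{n-1},\calo)=0$), which gives the slice $x_1$ and the splitting $\K[\A^n]=\K[\overline U][x_1]$. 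The preliminary reduction is the same in both treatments: the orbit of $x$ is $n$-dimensional and closed by Kostant--Rosenlicht, hence all of $\A^n$, and in characteristic zero the orbit map identifies $\A^n$ with $U$ acting on itself by left translations. Your version has the merit of making explicit exactly where freeness, characteristic zero, and the cohomological vanishing enter, and the compatibility $\Delta_i(q^*f)=q^*(\overline\Delta_i f)$ that you single out is indeed just equivariance of $q$ for a chosen linear lift of $\operatorname{Lie}(\overline U)$ to $\operatorname{Lie}(U)$; the Lie--Kolchin route buys the whole triangular structure at once instead of by induction.
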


\begin{cor}\label{cor:derksenetal}
  Let $U$ be a unipotent group of dimension $d=n-1$ or $d=n$ which acts freely on $\A^n$. Then,  upon
  conjugation with  an automorphism, there exist
  $U$-invariant locally nilpotent 
  derivations 
   $\{\Delta_1,\dots, \Delta_d\}\subset \operatorname{Der}\bigl(\pol\bigr)$  such that
  $\Delta_i(x_j)=\delta_{ij}$ for $i=1,\ldots,d$ and $j=i,\ldots,n$. 
\end{cor}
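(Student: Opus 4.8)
The plan is to deduce this directly from Proposition \ref{pro:derksenetalorig} in the case $d=n$, and to reduce the case $d=n-1$ to the case $d=n$ by enlarging the acting group. First I would treat $d=n$: since $U$ acts freely on $\A^n$, in particular there is a point $x\in\A^n$ with $U_x=\{\Id\}$, so Proposition \ref{pro:derksenetalorig} applies verbatim and (after conjugating by an automorphism) produces a basis $\Delta_1,\dots,\Delta_n$ of $\operatorname{Lie}(U)$ with $\Delta_i(x_j)=\delta_{ij}$ for $1\le i\le j\le n$. Each $\Delta_i$, being an element of the Lie algebra of the unipotent group $U$, is a locally nilpotent derivation of $\pol$ (cf. Remark \ref{rem:GaandLND}), and it is $U$-invariant because $\operatorname{Lie}(U)$ carries the adjoint action and $U$ is unipotent — more precisely, for a commutative unipotent group the adjoint action is trivial, and in general we only need the $\Delta_i$ to span an $\operatorname{Ad}(U)$-stable subspace, which $\operatorname{Lie}(U)$ is. This gives exactly the required $\{\Delta_1,\dots,\Delta_n\}$.

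For $d=n-1$: by Theorem \ref{thm:derksenetalorig}, the free $U$-action on $\A^n$ is $U$-isomorphic to the translation action of $U$ on $U\times\kk$. In these coordinates I would adjoin to $U$ a one-parameter group $\G_a$ acting by translation on the $\kk$-factor; since $U\times\G_a$ acts freely and transitively on $U\times\kk\cong\A^n$, this is a unipotent group of dimension $n$ acting freely on $\A^n$, so the case $d=n$ already proved applies and yields $U'$-invariant locally nilpotent derivations $\Delta_1,\dots,\Delta_n$ with $\Delta_i(x_j)=\delta_{ij}$ for $1\le i\le j\le n$, where $U'=U\times\G_a$. Restricting attention to $\Delta_1,\dots,\Delta_{n-1}$ (after possibly reindexing so that the extra $\G_a$-direction is $\Delta_n$), and noting that $U'$-invariance implies $U$-invariance, gives the statement for $d=n-1$. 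Alternatively one can run the Lie--Kolchin/triangulation argument of \cite{kn:DvEFM} directly for $d=n-1$ using that the quotient coordinate $x\in\kk$ from Theorem \ref{thm:derksenetalorig} is a $U$-invariant, so that one can choose coordinates in which $U$-invariant locally nilpotent derivations are triangular.

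The main obstacle is the bookkeeping around which derivations are genuinely $U$-invariant and the matching of indices $\delta_{ij}$ after conjugation: one must be careful that the single conjugating automorphism used works simultaneously for all $\Delta_i$, and that in the $d=n-1$ reduction the extra coordinate and extra derivation slot in as the last index $j=n$ without disturbing the relations $\Delta_i(x_j)=\delta_{ij}$ for $i\le n-1$. Both of these are handled cleanly by the normal form of Theorem \ref{thm:derksenetalorig} (free action $\iff$ conjugate to translations on $U\times\kk$) together with Proposition \ref{pro:derksenetalorig}, so beyond this indexing care the argument is routine.
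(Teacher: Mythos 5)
Your $d=n$ case is exactly the paper's argument (apply Proposition \ref{pro:derksenetalorig} to a point with trivial stabilizer, which exists by freeness), and that part is fine. The gap is in your main route for $d=n-1$. After enlarging to $U'=U\times\G_a$ and applying the $d=n$ case, Proposition \ref{pro:derksenetalorig} hands you a basis $\Delta_1,\dots,\Delta_n$ of $\operatorname{Lie}(U')$ with the triangular property, but it gives you no control over how this basis sits relative to the subalgebra $\operatorname{Lie}(U)\subset\operatorname{Lie}(U')$: the conjugating automorphism and the triangulation may mix the extra $\G_a$-direction into every basis vector, so there is no reason that any $n-1$ of the $\Delta_i$ lie in $\operatorname{Lie}(U)$ (equivalently, that their flows $e^{t\Delta_i}$ lie in $U$), nor that they commute with the $U$-action. ``After possibly reindexing'' does not address this, because the problem is membership, not indexing; and the obvious repair $\Delta_i\mapsto\Delta_i-c_i\Delta_n$ destroys the relations $\Delta_i(x_j)=\delta_{ij}$, since $\Delta_n(x_j)$ is an unconstrained polynomial for $j<n$. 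This matters for the statement as it is used: in the proof of Theorem \ref{thm:dimaut0} the corollary must produce derivations whose one-parameter groups are contained in $U=\aut(D)^0$, and derivations attached only to the enlarged group $U'$ are useless there.

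The paper's route for $d=n-1$ avoids this entirely: by Theorem \ref{thm:derksenetalorig} one has a $U$-equivariant isomorphism $\A^n\cong U\times\kk$, and one applies Proposition \ref{pro:derksenetalorig} to $U$ acting on itself (an $(n-1)$-dimensional unipotent group acting freely on $U\cong\A^{n-1}$); the resulting derivations automatically lie in $\operatorname{Lie}(U)$ and, extended to $U\times\kk$, kill the last coordinate, so $\Delta_i(x_n)=0=\delta_{in}$ comes for free. Your ``alternatively'' sentence gestures toward this, but as written it amounts to re-proving the triangulation rather than invoking it, and it is not carried out. To fix your argument, either follow the paper's reduction, or show that the composition series used in \cite{kn:DvEFM} can be chosen through the normal subgroup $U\triangleleft U'$ so that the triangulating basis is adapted to $\operatorname{Lie}(U)$ --- an extra argument your proposal does not supply.
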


\begin{proof}
 If $\dim U=n$, the result is a direct consequence of Proposition
 \ref{pro:derksenetalorig}, and if $\dim U=n-1$, then by Theorem
 \ref{thm:derksenetalorig}, the affine space $\A^n$ is $U$-isomorphic
 to $U\times \Bbbk$ and we  conclude by applying again Proposition
 \ref{pro:derksenetalorig} on $U$  acting on itself. 
\end{proof}

 Now we can state the main result of this section.

\begin{thm}
\label{thm:dimaut0}  Let $D$ be a simple derivation of $\pol$, with
$n\geq 2$. Then $\dim \aut(D)^0\leq n-2$. 
\end{thm}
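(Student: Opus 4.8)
The plan is to argue by contradiction: suppose $\dim\aut(D)^0\geq n-1$. By Theorem \ref{thm:aut0conn} we have $\dim\aut(D)^0\leq n$, so $\aut(D)^0$ is a connected unipotent algebraic group (Theorem \ref{thm:aut0unip}) of dimension $d=n-1$ or $d=n$ acting freely on $\A^n$ (Proposition \ref{prop:identity}). Set $U=\aut(D)^0$. By Corollary \ref{cor:derksenetal}, after conjugating by an automorphism of $\pol$ we may assume there are $U$-invariant locally nilpotent derivations $\Delta_1,\dots,\Delta_d$ spanning $\operatorname{Lie}(U)$ with $\Delta_i(x_j)=\delta_{ij}$ for $1\leq i\leq d$, $i\leq j\leq n$. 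The point is that, since these $\Delta_i$ lie in the Lie algebra of a group commuting with $D$, each of them commutes with $D$: $[D,\Delta_i]=0$ for all $i$.

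First I would extract structural consequences of $[D,\Delta_i]=0$. Write $D=\sum_{j=1}^n a_j\,\partial/\partial x_j$ with $a_j\in\pol$. Applying the bracket relation to the coordinate functions and using $\Delta_i(x_j)=\delta_{ij}$ for $j\geq i$, one finds that $\Delta_i(a_j)=0$ whenever $j\geq i$, and more generally that the $a_j$ are constrained to lie in the kernels of the relevant $\Delta_i$'s. In the case $d=n$ this forces every $a_j$ to be a constant, so $D$ is a constant-coefficient derivation; but a nonzero constant-coefficient derivation is conjugate to $\partial/\partial x_1$, which is locally nilpotent, hence not simple — a contradiction. In the case $d=n-1$, Theorem \ref{thm:derksenetalorig} gives a $U$-equivariant isomorphism $\A^n\cong U\times\kk$; concretely we may take $H=U\cong\G_a^{n-1}$ acting in a globally trivial way with $\pol^H=\kk[x_n]$ (one has to check that the invariant ring is a polynomial ring in one variable — this follows since the quotient is $\A^1$). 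Then Proposition \ref{prop:tras0} applies with $B=\kk[x_n]$ and $\overline{D}=D|_B$ a simple derivation of $\kk[x_n]$. But any derivation of $\kk[x_n]$ has the form $g(x_n)\,\partial/\partial x_n$, and its principal ideal $\langle g\rangle$ is stable, so simplicity forces $g$ to be a nonzero constant, i.e. $\overline{D}$ is conjugate to $\partial/\partial x_n$ — which is locally nilpotent and therefore not simple by Theorem \ref{th:MePan}(a) (or directly, since $\ker(\overline D)\neq\kk$), again a contradiction.

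The main obstacle I expect is the bookkeeping in the $d=n-1$ case: verifying that the globally trivial action produced by Corollary \ref{cor:derksenetal} genuinely has $\pol^H$ equal to a polynomial ring $\kk[s]$ in one variable (so that Proposition \ref{prop:tras0} can be invoked in the clean form above), rather than just knowing the geometric quotient is an affine curve. This is where the normality of $\A^n$ and the structure theorem for the $U$-action ($\A^n\cong U\times\kk$) must be used carefully to identify the invariant ring. Once that is in hand, the contradiction is immediate from the fact that $\kk[s]$ admits no simple derivation (equivalently, no derivation with trivial kernel), since the restriction $\overline D$ would be such a derivation. Both cases thus reduce the hypothesis $\dim\aut(D)^0\geq n-1$ to the existence of a simple derivation on a polynomial ring in too few variables, contradicting Remark \ref{rem:darboux}(3) together with the classification in low dimension. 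Therefore $\dim\aut(D)^0\leq n-2$.
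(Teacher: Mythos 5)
Your reduction collapses at the final step of the $d=n-1$ case. After applying Proposition \ref{prop:tras0} you conclude that $\overline{D}=D|_{_B}$ is a simple derivation of $B\cong\kk[x_n]$ and declare this a contradiction "since $\kk[s]$ admits no simple derivation (equivalently, no derivation with trivial kernel)". This is false: $\partial/\partial x_n$ is a simple derivation of $\kk[x_n]$ (an ideal $\langle f\rangle$ is stable only if $f\mid f'$, which forces $f\in\kk$), and its kernel is exactly $\kk$. Remark \ref{rem:darboux}(3) only says simple $\Rightarrow$ kernel $=\kk$, not the converse, and Theorem \ref{th:MePan}(a) concerns two variables and does not say that locally nilpotent derivations are non-simple in one variable; indeed the paper itself records that a derivation of $\kk[x]$ is simple if and only if it is locally nilpotent. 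So arriving at "a simple derivation of a one-variable polynomial ring" is exactly what Proposition \ref{prop:tras0} predicts and yields no contradiction. The paper's proof stops the descent one step earlier: iterating the restriction argument along the triangular family $\Delta_1,\dots,\Delta_d$ from Corollary \ref{cor:derksenetal} gives $a_{n-1},a_n\in\kk[x_n]$ (with $a_n\in\kk$ when $d=n$), and the contradiction is then extracted in \emph{two} variables: the restriction $\hat D=a_{n-1}\partial/\partial x_{n-1}+a_n\partial/\partial x_n$ of $D$ to $\kk[x_{n-1},x_n]$ is simple by Remark \ref{rem:restsim}, its further restriction to $\kk[x_n]$ forces $a_n\in\kk^*$, and then Remark \ref{rem:exasham}(2) (a derivation $\alpha\,\partial/\partial u+c'(u)\,\partial/\partial v$ with $\alpha\in\kk^*$, $c\in\kk[u]$, is never simple) gives the contradiction. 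Some such two-variable input is genuinely needed; the one-variable reduction cannot close the argument.

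There are two further gaps. In the $d=n$ case, the relations you derive are $\Delta_i(a_j)=0$ only for $i\leq j$, which forces $a_n\in\kk$ but says nothing like constancy for $a_1,\dots,a_{n-1}$ (e.g.\ $a_1$ is merely constrained to $\ker\Delta_1$); the claim "this forces every $a_j$ to be a constant" is unjustified, and the coefficients of a vector field commuting with an $n$-dimensional unipotent group need not be constant when the group is non-abelian. Relatedly, in the $d=n-1$ case you set $H=U\cong\G_a^{n-1}$, but a unipotent group of dimension $n-1$ need not be abelian (it is only isomorphic to $\A^{n-1}$ as a variety), so Proposition \ref{prop:tras0}, stated for $\G_a^s$ acting in a globally trivial way, cannot be invoked verbatim; the paper avoids this by only ever using the one-parameter subgroups $\{e^{t\Delta_i}\}$ one at a time, together with Corollary \ref{cor:tras}.
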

\begin{proof}
By Theorem \ref{thm:aut0conn}, $\aut(D)^0$ is an algebraic
unipotent group of dimension at most $n$. Let $d=\dim\aut(D)^0$ and
suppose that $d=n$ or $d=n-1$. It follows from Corollary
\ref{cor:derksenetal} that, upon a choice of coordinates, there exist
derivations $\{\Delta_1,\ldots,\Delta_d\}$ such that
$\Delta_i(x_j)=\delta_{ij}$ for $i=1,\ldots,d$ and $j=i,\ldots,n$ and
such that the subgroups
$\{e^{t\Delta_i}, t\in \Bbbk\}$  are contained in $\aut(D)^0$.

Let $D=\sum a_i\frac{\partial}{\partial x_i}$. 
By Corollary \ref{cor:tras}, since $e^{t\Delta_1}$ is a translation for all
$t$, we deduce that $a_j\in \Bbbk[x_2,\dots, 
x_{n}]$  for all $j=1,\dots,n$.

 Let  $\overline{D}$ and $\overline{\Delta}_2$ be  the restrictions of
$D$ and  $\Delta_2$  to $\K[x_2,\ldots,x_n]$ respectively.  Then for
$t\in\K$, $e^{t\overline{\Delta_2}}$  is a translation along $x_2$ that
belongs to $\aut(\overline{D})$. We conclude that
$a_j\in\K[x_3,\ldots,x_n]$ for $j=2,\ldots,n$.

By  recurrence, we deduce that $a_{n-1}\in\K[x_n]$; moreover $a_n\in\K$
if $d=n$ or $a_n\in\K[x_n]$ if $d=n-1$. In both cases, we can
consider the restriction $\hat D$ of $D$ to $\K[x_{n-1},x_n]$ which is
simple by Remark \ref{rem:restsim}. Write $\hat
D=a_{n-1}\frac{\partial}{\partial x_{n-1}}+a_n\frac{\partial}{\partial
  x_{n}}$ with $a_{n-1}, a_n\in \K[x_n]$; restricting to the last
coordinate, we  deduce that $a_n\in\K$, and therefore $\hat{D}$ is
not simple (see Remark \ref{rem:exasham}), which is a contradiction.
\end{proof}

Our next goal is to provide examples that show  the bound given by  Theorem \ref{thm:dimaut0}  is 
optimal for all $n\geq 2$, see Example
\ref{exa_maximal} below. In order to do so, we will use the following
criterion, attributed to Shamsuddin.

\begin{pro}\label{prop:sham}
 Let $A$ be a $\Bbbk$-algebra and consider a simple derivation
 $\delta\in\der(A)$. Let $D_{a,b}\in \der\bigl(A[y]\bigr)$  the
 derivation obtained by extending $ \delta$  by $D_{a,b}(y)=ay+b$ with $a,b\in A$. Then $D_{a,b}$ is simple if and only if  
  \[
    \delta h\neq ah+b \text{ for all }h\in A.
  \]
        \end{pro}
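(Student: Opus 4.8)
The plan is to prove Shamsuddin's criterion by a direct analysis of $D_{a,b}$-stable ideals of $A[y]$, exploiting the one-variable polynomial structure over $A$ and the simplicity of $\delta$ on $A$. The key observation is that $D_{a,b}$ preserves the $y$-degree filtration up to the leading term: if $p = c_d y^d + (\text{lower})$ with $c_d \in A\setminus\{0\}$, then $D_{a,b}(p) = (\delta c_d + d\, a\, c_d) y^d + (\text{lower})$, so the leading coefficient gets multiplied by the "twisted eigenvalue" operator $c \mapsto \delta c + d\, a\, c$. This is the engine of both directions.

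First I would prove the easy direction (contrapositive): if $\delta h = a h + b$ for some $h \in A$, then $y - h$ is a Darboux polynomial, since $D_{a,b}(y - h) = ay + b - \delta h = ay + b - (ah + b) = a(y - h)$, so $\langle y - h\rangle$ is a nontrivial $D_{a,b}$-stable ideal (Remark \ref{rem:darboux}), hence $D_{a,b}$ is not simple. For the converse, suppose no such $h$ exists; I want to show any nonzero $D_{a,b}$-stable ideal $I \subset A[y]$ equals $A[y]$. Let $\mathfrak{a}_d \subset A$ be the set of leading coefficients of degree-$d$ elements of $I$, together with $0$ (and all of $A$ once $I$ contains a nonzero constant). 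One checks each $\mathfrak{a}_d$ is an ideal of $A$; moreover multiplication by $y$ shows $\mathfrak{a}_d \subseteq \mathfrak{a}_{d+1}$, and the leading-term computation above shows each $\mathfrak{a}_d$ is stable under $c \mapsto \delta c + d\, a\, c$. The plan is to argue that the minimal-degree nonzero $\mathfrak{a}_d$ forces $I$ to contain a nonzero constant, whence $I = A[y]$.

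The main obstacle — and the place requiring the real argument — is the minimal-degree step. Take $d$ minimal with $\mathfrak{a}_d \neq 0$. If $d = 0$ we are done immediately. If $d \geq 1$, I would pick $p \in I$ of degree $d$ with leading coefficient $c \neq 0$; since $\delta$ is simple and $\mathfrak{a}_d$ is $\delta$-stable (as $\delta c = (\delta c + d\,a\,c) - d\,a\,c$ with both terms forced into $\mathfrak{a}_d$ once we know $a \cdot \mathfrak{a}_d \subseteq \mathfrak{a}_d$, which holds because $\mathfrak{a}_d$ is an ideal of $A$), simplicity of $\delta$ gives $\mathfrak{a}_d = A$, so WLOG $c = 1$, i.e. $p = y^d + c_{d-1}y^{d-1} + \cdots + c_0$ is monic. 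Now $D_{a,b}(p) - d\,a\,p \in I$ has degree $\leq d - 1$: indeed its $y^d$-coefficient is $\delta(1) + d\,a - d\,a = 0$. By minimality of $d$ this element must be $0$ when $d-1 < d$ is below the threshold, i.e. it lies in the zero ideal only if... — here one iterates: in the monic case the subleading coefficient of $D_{a,b}(p) - d\,a\,p$ is $\delta c_{d-1} + (d-1)a c_{d-1} + (\text{from } b\partial_y) \cdots$, and pushing the degree down one forces, at $d = 1$, exactly the equation $\delta h = ah + b$ for $h = -c_0$, contradicting the hypothesis. So $d \geq 1$ is impossible, $d = 0$, and $I = A[y]$. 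I expect the bookkeeping of how the constant term $b$ enters (via $D_{a,b}(y) = ay + b$, affecting the degree-drop) to be the fiddly part, but it is exactly what produces the equation $\delta h = ah + b$ in the end.
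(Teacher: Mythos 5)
The paper itself gives no argument here: it simply cites Nowicki (Theorem 13.2.1), so your proposal is supplying the proof rather than paralleling one, and it follows the standard line of attack. Your easy direction is complete and correct: $D_{a,b}(y-h)=a(y-h)$ makes $\langle y-h\rangle$ a nonzero proper $D_{a,b}$-stable ideal. In the converse direction, the leading-coefficient ideals $\mathfrak{a}_d$, their stability under $c\mapsto \delta c+d\,a\,c$ (hence under $\delta$, since $a\,\mathfrak{a}_d\subseteq\mathfrak{a}_d$), the conclusion $\mathfrak{a}_d=A$ from simplicity of $\delta$, and the element $D_{a,b}(p)-d\,a\,p\in I$ of degree $\le d-1$ are all correct and are exactly the right ingredients.

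The one place your write-up does not hold together as stated is the endgame. There is no ``iteration pushing the degree down to $d=1$'': once $d$ is the minimal degree of a nonzero element of $I$, minimality forces $D_{a,b}(p)-d\,a\,p=0$ outright, and this yields equations on the coefficients of $p$, not new elements of smaller degree, so as literally written the case $d\ge 2$ is not handled. But no iteration is needed: the contradiction falls out at once, at any $d\ge 1$, from the computation you already set up. For monic $p=y^d+c_{d-1}y^{d-1}+\cdots+c_0$, the $y^{d-1}$-coefficient of $D_{a,b}(p)-d\,a\,p$ equals $\delta(c_{d-1})+(d-1)a c_{d-1}+d\,b-d\,a c_{d-1}=\delta(c_{d-1})-a c_{d-1}+d\,b$, and its vanishing gives $\delta(h)=ah+b$ with $h=-c_{d-1}/d$ (characteristic zero is used to divide by $d$; for $d=1$ this is exactly your $h=-c_0$), contradicting the hypothesis. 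With that one line inserted in place of the ``iterate'' sentence, your proof is complete and is essentially the classical proof of Shamsuddin's criterion that the paper outsources to Nowicki.
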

\begin{proof}
  See  \cite[Theorem 13.2.1]{Now1}.
\end{proof}

\begin{rem}\label{rem:exasham}
  \beginenum
\item
  Notice that in particular there exist 
        infinitely many simple derivations of the form  $ \frac{\partial}{\partial u}+
        c(u,v)\frac{\partial}{\partial v}$, $c\in \Bbbk[u,v]$.

      \item If $c\in \K[u]$ and $\alpha\in \K^*$, then the  derivation 
    $\delta=\alpha\frac{\partial}{\partial u}+c'(u)\frac{\partial}{\partial
      v}$, is not simple, since $\delta(\alpha^{-1}c)=c'(u)$. 
        \end{enumerate}
      \end{rem}

    \begin{pro}\label{pro:RestricAut}
 Let   $D\in\der\bigl(\Bbbk[u,v,x_1,\dots,
        x_n]\bigr)$ be a simple derivation of the form       $D=\frac{\partial}{\partial u}+
        c(u,v)\frac{\partial}{\partial v}+\sum_{j=1}^n
          b_j(u,v,x_1,\dots,x_n)\frac{\partial}{\partial x_j}$. Then
          $\varphi|_{_{\K[u,v]}}=\Id_{\K[u,v]}$ for all
      $\varphi\in\aut(D)$.
    \end{pro}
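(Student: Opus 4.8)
The plan is to exploit two facts proved earlier: first, by Proposition \ref{prop:identity}, every non-trivial $\varphi\in\aut(D)$ is fixed-point-free on $\A^{n+2}$; second, the derivation $D$ restricts, on the subring $\K[u,v]$, to the \emph{simple} derivation $\delta=\tfrac{\partial}{\partial u}+c(u,v)\tfrac{\partial}{\partial v}$ (Remark \ref{rem:restsim}(3) applies with $s=2$, after relabelling coordinates so that the last two are $u,v$; note $\delta$ being simple also follows since the simplicity of $D$ forces simplicity of this restriction). The idea is to show that any $\varphi\in\aut(D)$ must carry the subring $\K[u,v]$ into itself and induce an element of $\aut(\delta)$, and then invoke Theorem \ref{th:MePan}(b), which says $\aut(\delta)=1$ for the simple two-variable derivation $\delta$.

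First I would argue that $\varphi$ preserves the subalgebra $\K[u,v]$. The key observation is that $\K[u,v]$ can be characterised intrinsically in terms of $D$: since $D(u)=1$ and $D(v)=c(u,v)\in\K[u,v]$, the subring $\K[u,v]$ is $D$-stable, and in fact $\K[u,v]=\K[u,D(u),v,D(v),\dots]$ is the smallest $D$-stable subalgebra containing $u$. More usefully, consider the locally nilpotent derivations $\partial/\partial x_1,\dots,\partial/\partial x_n$: I would try to show $\K[u,v]$ is exactly the common kernel of a suitable family of derivations that is canonically attached to $D$, so that $\varphi$ (commuting with $D$) must send this common kernel to itself. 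The cleanest route is probably via the ideal-theoretic structure: one shows $\varphi(\K[u,v])\subseteq\K[u,v]$ by analysing degrees in $x_1,\dots,x_n$ together with the relation $\varphi D=D\varphi$ applied to $u$ and $v$; alternatively, one uses that $u$ is a linear coordinate for $D$ in the sense of Definition \ref{defi:coord} and Lemma \ref{lem:deltatrans} (promised in the introduction) to pin down $\varphi(u)$, and then $\varphi(v)$.

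Once $\varphi(\K[u,v])\subseteq\K[u,v]$ is known, the restriction $\bar\varphi=\varphi|_{\K[u,v]}$ is a $\K$-algebra endomorphism commuting with $\delta$; a symmetric argument applied to $\varphi^{-1}$ shows $\bar\varphi$ is an automorphism, so $\bar\varphi\in\aut(\delta)$. By Theorem \ref{th:MePan}(b), $\aut(\delta)=1$, hence $\bar\varphi=\Id_{\K[u,v]}$, which is exactly the claim. I expect the \textbf{main obstacle} to be the first step, namely proving that $\varphi$ stabilises $\K[u,v]$; this is the only place where the specific ``triangular'' shape of $D$ (with $u,v$ appearing before the $x_j$) is essential, and it will require a careful leading-term or filtration argument rather than a soft functorial one. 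The fixed-point-free property from Proposition \ref{prop:identity} may also be needed here to rule out degenerate behaviour of $\varphi$ on the $(u,v)$-plane, and it is worth keeping in mind as a fallback tool if the degree argument alone does not close the gap.
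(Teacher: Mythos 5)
Your overall skeleton is the same as the paper's: show that $\varphi$ stabilises $\K[u,v]$, note that the restriction $\delta=\frac{\partial}{\partial u}+c(u,v)\frac{\partial}{\partial v}$ of $D$ is simple, and conclude from Theorem \ref{th:MePan}(b) that the restricted automorphism is the identity (applying the argument to $\varphi^{-1}$ as well to get an automorphism of $\K[u,v]$). But the proposal stops exactly where the proposition's real content begins: you never prove that $\varphi(v)\in\K[u,v]$, you only list candidate strategies and flag this step as the ``main obstacle''. That step is the heart of the proof, and none of your fallback ideas closes it. The derivations $\partial/\partial x_1,\dots,\partial/\partial x_n$ are not canonically attached to $D$ (they do not commute with $D$ in general, since $[\partial/\partial x_j,D]=\sum_k \partial b_k/\partial x_j\,\partial/\partial x_k$), so $\K[u,v]$ is not an intrinsic ``common kernel'' that $\varphi$ must preserve; the ``smallest $D$-stable subalgebra containing $u$'' is $\K[u]$, not $\K[u,v]$, and any characterisation involving $v$ is circular because one would already need to control $\varphi(v)$; Lemma \ref{lem:deltatrans} applies only to elements of $\aut(D)^0$ of the form $e^{\Delta}$, not to arbitrary $\varphi\in\aut(D)$, and in any case it only concerns $\varphi(u)$, which is the easy part (from $D(\varphi(u))=\varphi(D(u))=1$ and $\ker D=\K$ one gets $\varphi(u)=u+t$ directly); and Proposition \ref{prop:identity} plays no role here.

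What the paper actually does for the missing step is a concrete leading-term computation, and you would need to supply something of this kind: write $f_2=\varphi(v)=\sum_r\alpha_r x^r$ with $\alpha_r\in\K[u,v]$, let $d$ be the lexicographic multidegree of $f_2$ in $x_1,\dots,x_n$, and write $c(u,v)=\sum_{k=0}^{\ell}c_k(u)v^k$, where $\ell\geq 1$ because $c\in\K[u]$ would contradict simplicity of $\delta$ (Remark \ref{rem:exasham}). If $d\neq 0$, then $\varphi(D(v))=\sum_k c_k(u+t)f_2^k$ has multidegree $\ell d$ with leading coefficient $c_\ell(u+t)\alpha_d^{\ell}\neq 0$, while $D(f_2)$ has multidegree at most $d$; equating the two forces $\ell=1$ and then $\delta(\alpha_d)=c_1(u+t)\alpha_d$, so $\alpha_d$ is a Darboux polynomial of $\delta$ with nonzero eigenvalue, hence $\alpha_d=0$, a contradiction. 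Without this (or an equivalent) argument, the proposal is a plan rather than a proof.
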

    \begin{proof}
Let $\varphi=(f_1,f_2,g_1,\ldots,g_n)\in\aut(D)$. Since $1= \varphi   
    D(u)= D\bigl(\varphi(u)\bigr)=D(f_1)$ we have that  $D(f_1-u)=0$ so $f_1=u+t$ for some
    $t\in\K$.

    We write $f_2=\sum_{r}\alpha_rx^r$, where  
    $r=(r_1,\dots, r_n)\in \Z_{\geq 0}^n$ and $x^r=x_1^{r_1}\dots x_n^{r_n}$  and 
    $\alpha_r\in\K[u,v]$. Let $d$ be the multidegree of $f_2$ for the
    lexicographic order, with $x_1\geq\dots\geq x_n$. First, we show  that
    $d=0$; for this, 
    assume that $d\neq 0$ and  write $c(u,v)=\sum_{k=0}^\ell
    c_k(u)v^k$. Then  $\ell> 0$ (see Remark \ref{rem:exasham}). Let us
    calculate in a explicit way the equality $\varphi(D)(v)=D\varphi(v)$:
    \[
        \varphi D(v) =  \sum_{k=0}^\ell c_k(u+t)f_2^k =
        c_\ell(u+t) \alpha_{r}^\ell x^{\ell d} +\text{strictly lower degree terms},
      \]
     and, since $c_\ell(u+t) \alpha_{r}^\ell \neq 0$, it follows
     that $\varphi D(v)$ is of multidegree $\ell d$. On the other hand, 
      \begin{align*}
         D\bigl(\varphi(v)\bigr)= D(f_2)&= \sum_{r}\delta(\alpha_r)x^r+
        \sum_{r}\ \sum_{j=1}^n i_jb_j(u,v,x_1,\dots, x_n)\alpha_rx^{r-e_j}\\
        &=\delta(\alpha_{d})x^{d}+ \text{strictly lower degree terms,} 
      \end{align*}
        where $\{e_j\}$ is the canonical basis of the lattice $\Z^n$
          and  $\delta\in\der\bigl(\K[u,v]\bigr)$ is
          restriction of $D$ to $\K[u,v]$. It follows  that
          $D\bigl(\varphi(v)\bigr)$ is at most of multidegree $d$.  
      
      From the equation $\varphi D(v)=D\varphi(v)$ we deduce $\ell\leq
      1$ and hence $\ell=1$.
      It follows that  $c_1(u+t)\alpha_{d}=\delta(\alpha_{d})$ and
      $\alpha_{d}$ is a Darboux polynomial for $\delta$, 
    and therefore it is a constant. But $c_1(u+t)\neq 0$, so
    $\alpha_{d}=0$ which is a contradiction. 
       
   Applying the same reasoning to $\varphi^{-1}\in \aut(D)$ we
      deduce that $(f_1,f_2)\in \aut(\delta)$, and it follows from  Theorem
      \ref{th:MePan} that  $(f_1,f_2)=\varphi|_{_{\K[u,v]}}=\Id_{\K[u,v]}$.     
   \end{proof}

\begin{pro}\label{pro_extension}
Let $\delta\in\der\bigl(\K[u,v]\bigr)$ be a simple derivation and let
$I=\{a_1,\ldots, a_n\}\subset  \K[u,v]$ be a  linearly independent
subset.  If the linear span of $I$ is such that 
$\langle I\rangle_\kk\cap \operatorname{Im}(\delta)=\{0\}$, then the derivation
$D_I\in\der\bigl(\Bbbk[u,v,x_1,\ldots,x_n]\bigr) $ obtained by extending
$\delta$ as $D_I(x_j)=a_j \mbox{ for }j=1,\ldots,n$ is simple.
\end{pro}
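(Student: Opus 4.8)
The plan is to build $D_I$ by adjoining the variables $x_1,\dots,x_n$ one at a time and applying Shamsuddin's criterion (Proposition~\ref{prop:sham}) at each step. Set $A_0=\K[u,v]$, $\delta_0=\delta$, and for $1\le k\le n$ let $A_k=\K[u,v,x_1,\dots,x_k]$ and let $\delta_k\in\der(A_k)$ be the extension of $\delta_{k-1}$ determined by $\delta_k(x_k)=a_k$, so that $\delta_n=D_I$. Since $a_k\in\K[u,v]\subset A_{k-1}$, the derivation $\delta_k$ is exactly the derivation $D_{0,a_k}$ obtained from $\delta_{k-1}$ in the notation of Proposition~\ref{prop:sham}; hence it will suffice to show, by induction on $k$, that $\delta_{k-1}$ is simple and that $a_k\notin\operatorname{Im}(\delta_{k-1})$, for then Shamsuddin's criterion gives that $\delta_k$ is simple, and the case $k=n$ is the assertion. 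The base case is the hypothesis that $\delta$ is simple.

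Granting $\delta_{k-1}$ simple, the crux is to prove $a_k\notin\operatorname{Im}(\delta_{k-1})$. I would suppose $\delta_{k-1}(h)=a_k$ for some $h\in A_{k-1}=\K[u,v][x_1,\dots,x_{k-1}]$ and filter by the total degree $d$ of $h$ in $x_1,\dots,x_{k-1}$. Writing $h=\sum_{e=0}^{d}h^{(e)}$ with $h^{(e)}$ homogeneous of degree $e$ in the $x_i$ (coefficients in $\K[u,v]$) and $h^{(d)}\neq 0$, a direct computation shows that the degree-$d$ homogeneous component of $\delta_{k-1}(h)$ is $\delta(h^{(d)})$ (that is, $\delta$ applied to the $\K[u,v]$-coefficients), while its degree-$(d-1)$ component is $\delta(h^{(d-1)})+\sum_{i=1}^{k-1}a_i\,\partial h^{(d)}/\partial x_i$. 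As $a_k\in\K[u,v]$ is of degree $0$, comparing components gives the following. If $d=0$, then $\delta(h)=a_k$, so $a_k\in\operatorname{Im}(\delta)\cap\langle I\rangle_\kk=\{0\}$, contradicting $a_k\neq 0$. If $d\ge 1$, then $\delta(h^{(d)})=0$, and since $\ker\delta=\K$ (Remark~\ref{rem:darboux}) every $\K[u,v]$-coefficient of $h^{(d)}$ is a constant, so $h^{(d)}\in\K[x_1,\dots,x_{k-1}]$ and each $\partial h^{(d)}/\partial x_i$ has constant coefficients.

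It then remains to exclude $d\ge 1$ via the degree-$(d-1)$ component. If $d\ge 2$ this component vanishes, so extracting the coefficient of each degree-$(d-1)$ monomial $x^{\mathbf s}$ gives $\delta(h_{\mathbf s})=-\sum_{i}c_{i,\mathbf s}a_i$ for suitable constants $c_{i,\mathbf s}\in\K$ (the coefficients of the polynomials $\partial h^{(d)}/\partial x_i$); the left-hand side lies in $\operatorname{Im}(\delta)$ and the right-hand side in $\langle I\rangle_\kk$, so both are zero, and then the linear independence of $I$ forces $c_{i,\mathbf s}=0$ for all $i$ and all $\mathbf s$, whence each $\partial h^{(d)}/\partial x_i$ is zero and, since $\operatorname{char}\K=0$, $h^{(d)}=0$ --- a contradiction. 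If $d=1$, the degree-$0$ component of $\delta_{k-1}(h)$ must equal $a_k$ itself, which reads $\delta(h_{\mathbf 0})+\sum_i h_{e_i}a_i=a_k$ with $h_{e_i}\in\K$ (the coefficients of $h^{(1)}$, constant by the previous paragraph); thus $\delta(h_{\mathbf 0})=a_k-\sum_i h_{e_i}a_i\in\operatorname{Im}(\delta)\cap\langle I\rangle_\kk=\{0\}$, producing the nontrivial $\K$-linear relation $a_k=\sum_i h_{e_i}a_i$ among the elements of $I$, again a contradiction. This proves $a_k\notin\operatorname{Im}(\delta_{k-1})$ and closes the induction. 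The main obstacle I anticipate is purely the degree bookkeeping in the last two paragraphs: one has to check carefully that at degree $d-1$ the scalars $c_{i,\mathbf s}$ multiply the $a_i$ linearly, so that both hypotheses on $I$ --- its $\K$-linear independence and $\langle I\rangle_\kk\cap\operatorname{Im}(\delta)=\{0\}$ --- can be brought to bear, and one must keep the cases $d=1$ and $d\ge 2$ apart because only in the former does the component under scrutiny still involve $a_k$.
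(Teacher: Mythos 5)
Your proof is correct and follows essentially the same route as the paper: induction adjoining one variable at a time via Shamsuddin's criterion, then a top-degree analysis of a hypothetical $h$ with $\delta_{k-1}(h)=a_k$ over $\K[u,v]$, using $\ker\delta=\K$ together with the two hypotheses on $I$. The only difference is bookkeeping: the paper orders monomials by lexicographic multidegree and derives the contradiction from the single term of multidegree $d-e_{j_0}$, whereas you grade by total degree in the $x_i$ and inspect every coefficient of the degree-$(d-1)$ component; both versions of the leading-term argument work.
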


\begin{proof}
We proceed by
induction on $n=\# I$; denote $D_n=D_I$. If $n=0$, then $D_0=\delta$ is simple by
hypothesis. 
Suppose now that $D_I$ is a simple derivation of
$\kk[u,v,x_1,\dots,x_n]$ for $\# I\leq n$, and consider $I=\{a_1,\ldots,
a_{n+1}\}$ as in the hypothesis. Then $D_{I}$ restricts to a simple
derivation $D_n\in \der\bigl(\K[u,v,x_1,\dots,x_n]\bigr)$ by hypothesis. 

By Shamsuddin's
criterion (see Proposition \ref{prop:sham}) $D_{n+1}$ is not simple if and only if there exists $f\in
\K[u,v,x_1,\ldots,x_n]$ such that $D_n(f)=a_{n+1}$ . If $f\in
\K[u,v,x_1,\ldots,x_n]$ is such that
$D_n(f)=a_{n+1}$, write  
$f=\sum_{r\in \Z_{\geq 0}^n}\alpha_rx^r$, with $\alpha_r\in \K[u,v]$,
and  let  $d$ be the multidegree of $f\in \K[u,v][x_1,\dots,x_n]$ for the
lexicographic order. Then \begin{equation}\label{eqn:DISimple}
  \begin{split}
    a_{n+1}= D_n(f)&=\sum_{r}\delta(\alpha_r)x^r+
        \sum_{r}\ \alpha_r\sum_{j=1}^n i_ja_jx^{r-e_j}\\
        &=\delta(\alpha_{d})x^{d}+ \text{strictly lower degree terms}. 
      \end{split}  
      \end{equation}

By considering the term of degree $d$ in Equation
\eqref{eqn:DISimple}, we deduce that  $\delta(\alpha_{d})=a_{n+1}$ if
$d=0$ or $\delta(\alpha_{d})=0$ otherwise. In the first case, we
deduce that $a_{n+1}$ belongs to $\langle I\rangle_\kk\cap \operatorname{Im}(\delta)$
and $a_{n+1}=0$ which is a contradiction.  

If $\delta(\alpha_{d})=0$, then $\alpha_{d}\in \kK$ because $\delta$ is simple. 
Consider $j_0=\max\{j\mathrel{ : } (d)_j\neq 0\}$, and let $d'=d
-e_{j_0}$. By definition of $j_0$, for all $j=1,2,\ldots,n$ and for
all multi-indexes  $r$ such that $0\leq r<d$ we have $d'\neq
r-e_j$. We deduce that the term of degree $d'$ in Equation
\eqref{eqn:DISimple} is 
\[
\delta(\alpha_{d'})+\alpha_{d}i_{j_0}a_{j_0}
\]
and this term is equal to $a_{n+1}$ if $d'=0$ or $0$ otherwise. In
both cases, we have a contradiction with the hypothesis $\langle
I\rangle_\kk\cap \operatorname{Im}(\delta)=\{0\}$.  
\end{proof}

The following example exhibits a derivation $\delta\in\der\bigl(\K[u,v]\bigr)$
such that $\delta$ admits linearly independent subsets $I$ as in the
hypothesis of Proposition \ref{pro_extension}, with arbitrary
cardinal. Moreover, for the family of simple derivations that we
produce the bound given in 
Theorem \ref{thm:dimaut0} is reached.

\begin{exa}\label{exa_maximal}
 Consider the derivation
 $\delta=\frac{\scriptstyle\partial}{\scriptstyle\partial
   u}+(1+uv)\frac{\scriptstyle\partial}{\scriptstyle\partial v}\in
 \der\bigl(\K[u,v]\bigr)$ --- notice that $\delta$ it is simple by
 Shamsuddin's Criterion. 

 Let
  us show that $\operatorname{Im}(\delta)\cap \kk[v]=\kk$. Assume that there exists
  $f=\sum_{i,j}\alpha_{i,j}u^iv^j\in\kk[u,v]$ such that $\delta(f)\in\kk[v]$. 
  By a direct computation, we have that:
  \[
  \delta(f)
  =\sum_{i,j}\alpha_{i,j}(iu^{i-1}v^{j}+ju^{i}v^{j-1}+ju^{i+1}v^{j}).
\]

 Let $(i_0,j_0)$ be the multidegree of $f$ for lexicographic order with $v\geq u$. If $j_0\neq 0$, then $\delta(p)$ is of multidegree $(i_0+1,j_0)$, and we cannot have $\delta(p)\in\kk[v]$.
 If $j_0=0$ then $f\in\kk[u]$ and $\delta (f)\in\kk[v]$ implies
 $\delta(f)\in\kk$ and our assertion follows.

Notice in particular that there exist linearly independent subsets $I$ as in the
 hypothesis of  Proposition \ref{pro_extension}, of arbitrary finite
 cardinal.
   \end{exa}

  \begin{cor}\label{cor:boundsharp}
    For every $n\geq 2$, there exists a simple derivation $D\in
    \operatorname{Der}\bigl(\pol\bigr)$ such that $\dim \aut(D)^0=n-2$.
\end{cor}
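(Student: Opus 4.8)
The strategy is to exhibit, for each $n\ge 2$, an explicit simple derivation $D$ on $\pol$ whose isotropy group $\aut(D)^0$ has dimension exactly $n-2$; by Theorem \ref{thm:dimaut0} we already know $\dim\aut(D)^0\le n-2$, so it suffices to produce a subgroup of $\aut(D)$ of dimension $n-2$. First I would start from the simple derivation $\delta=\frac{\partial}{\partial u}+(1+uv)\frac{\partial}{\partial v}$ of $\K[u,v]$ from Example \ref{exa_maximal}, and pick the linearly independent subset $I=\{1,v,v^2,\dots,v^{n-3}\}\subset\K[u,v]$ when $n\ge 3$ (for $n=2$ the bound $n-2=0$ is automatic, since $\aut(D)^0$ is at least a point). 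By Example \ref{exa_maximal}, $\operatorname{Im}(\delta)\cap\K[v]=\K$, so $\langle I\rangle_\K\cap\operatorname{Im}(\delta)=\{0\}$ (a nonzero element of $\langle I\rangle_\K$ is a nonconstant polynomial in $v$ unless it is a nonzero constant, and in neither case does it lie in $\operatorname{Im}(\delta)$). Hence Proposition \ref{pro_extension} applies and the extension $D=D_I\in\der\bigl(\K[u,v,x_1,\dots,x_{n-2}]\bigr)$, defined by $D(x_j)=v^{j-1}$ for $j=1,\dots,n-2$, is a simple derivation of the polynomial ring in $n$ variables.

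Next I would produce the $(n-2)$-dimensional subgroup of $\aut(D)$. Consider the $\K$-automorphisms $\tau_{c}$, for $c=(c_1,\dots,c_{n-2})\in\K^{n-2}$, fixing $u$ and $v$ and sending $x_j\mapsto x_j+c_j$; these are translations in the last $n-2$ coordinates and clearly form a subgroup $H\cong\G_a^{n-2}$ of $\aut\bigl(\pol\bigr)$. One checks $\tau_c\in\aut(D)$ by verifying $\tau_c\circ D=D\circ\tau_c$ on generators: on $u,v$ both sides agree because the coefficients $1$, $1+uv$ of $D$ involve only $u,v$, which $\tau_c$ fixes; and $D\bigl(\tau_c(x_j)\bigr)=D(x_j)=v^{j-1}=\tau_c\bigl(v^{j-1}\bigr)=\tau_c\bigl(D(x_j)\bigr)$ since $D(x_j)$ depends only on $v$. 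Thus $H\subset\aut(D)$ acts by translations, $\dim H=n-2$, and therefore $\dim\aut(D)^0\ge n-2$. Combining with Theorem \ref{thm:dimaut0} gives $\dim\aut(D)^0=n-2$.

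The only point requiring a little care is the passage from $\operatorname{Im}(\delta)\cap\K[v]=\K$, established in Example \ref{exa_maximal}, to the hypothesis $\langle I\rangle_\K\cap\operatorname{Im}(\delta)=\{0\}$ needed by Proposition \ref{pro_extension}: since $\langle I\rangle_\K\subset\K[v]$, any element of the intersection lies in $\operatorname{Im}(\delta)\cap\K[v]=\K$; but $\operatorname{Im}(\delta)$ contains no nonzero constant (if $\delta(f)=\lambda\in\K^*$ then $\lambda^{-1}f$ would be a slice, contradicting that $\delta$ is simple and has trivial kernel — indeed a simple derivation has no nonzero element in its image that is a unit, as that image would generate a proper nonzero $\delta$-stable ideal), so the intersection is $\{0\}$. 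I do not expect any real obstacle here; the statement follows by assembling Theorem \ref{thm:dimaut0}, Proposition \ref{pro_extension}, and Example \ref{exa_maximal} together with the elementary verification that translations in $x_1,\dots,x_{n-2}$ commute with $D$.
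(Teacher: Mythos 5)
There is a genuine error in your choice of $I$, and in the justification you give for it. You claim that $\operatorname{Im}(\delta)$ contains no nonzero constant because an element $f$ with $\delta(f)=\lambda\in\K^*$ would be (a multiple of) a slice and this would contradict simplicity. This is false: $\delta=\frac{\partial}{\partial u}+(1+uv)\frac{\partial}{\partial v}$ satisfies $\delta(u)=1$, so $1\in\operatorname{Im}(\delta)$, and there is no tension whatsoever between simplicity and having an element mapped to $1$ --- indeed the paper's very notion of a \emph{linear coordinate} for a simple derivation (Definition \ref{defi:coord}) requires $D(s)=1$, and Example \ref{exa_maximal} only proves $\operatorname{Im}(\delta)\cap\K[v]=\K$, not $\{0\}$. (Your parenthetical remark that the image would generate a proper $\delta$-stable ideal is also incorrect: for a simple derivation the ideal generated by $D(x_1),\dots,D(x_n)$ is the whole ring, cf.\ Remark \ref{rem:restsim}(2).) Consequently your set $I=\{1,v,\dots,v^{n-3}\}$ does \emph{not} satisfy the hypothesis $\langle I\rangle_\K\cap\operatorname{Im}(\delta)=\{0\}$ of Proposition \ref{pro_extension}, and the resulting extension is in fact \emph{not} simple: with $D(x_1)=1=\delta(u)$ one gets $D(x_1-u)=0$, so $\langle x_1-u\rangle$ is a nonzero proper $D$-stable ideal.

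The repair is immediate and brings you back to the paper's own argument: choose instead $I=\{v,v^2,\dots,v^{n-2}\}$ (or any linearly independent subset of $\K[v]$ whose span meets $\K$ only in $0$). Then any element of $\langle I\rangle_\K\cap\operatorname{Im}(\delta)$ lies in $\operatorname{Im}(\delta)\cap\K[v]=\K$ and has zero constant term, hence is $0$, so Proposition \ref{pro_extension} applies and $D_I$ is simple on $\K[u,v,x_1,\dots,x_{n-2}]$. The rest of your argument --- that the translations in the $x_j$ commute with $D_I$ because the coefficients $1$, $1+uv$, $v^{j}$ involve only $u,v$, giving $\G_a^{n-2}\subset\aut(D_I)^0$, and then equality of dimensions from Theorem \ref{thm:dimaut0} --- is correct and is exactly the paper's proof (the paper phrases the commutation as $[D_I,\partial/\partial x_j]=0$). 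For $n=2$ you should also say explicitly that a simple derivation of $\K[x_1,x_2]$ exists (Remark \ref{rem:exasham}(1)), with $\aut(D)$ trivial by Theorem \ref{th:MePan}; this is how the paper handles that case.
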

\begin{proof}
 The case  $n=2$ is the content of Theorem \ref{th:MePan}. If $n>2$,
  consider $\delta$ as in Example \ref{exa_maximal} and let
  $I=\{b_1,\dots ,b_n\} \subset \K[v]$  a linearly independent subset of cardinal $n$,
  such that $\langle I\rangle_\K\cap
  \operatorname{Im}(\delta)=\{0\}$. Then the 
derivation 
  $D_I=\delta+\sum_i
    b_i(v)\frac{\scriptstyle\partial}{\scriptstyle\partial x_i}$.
    is simple by Proposition \ref{pro_extension}.  Moreover, since  $D_I$ commutes with
    $\frac{\scriptstyle\partial}{\scriptstyle\partial x_1},\ldots,
    \frac{\scriptstyle\partial}{\scriptstyle\partial x_n}$ for $n\geq
    1$, it follows that
$      \aut(D_I)^0$ contains the subgroup $\G_a^n$    of translations on the $x$
coordinates. Hence $\aut(D_I)^0=\G_a^n$ by Theorem \ref{thm:dimaut0},  
      and the result follows.  
\end{proof}

    \section{Derivations invariant under the action of a group of translations}
\label{sec:translations}

Let $D$ be a simple derivation such that $\aut(D)$ contains a non
trivial subgroup of translations.  In this section we give some
insight on how to exploit this fact in order to describe $\aut(D)$.

\subsection{Isotropy groups with non trivial subgroups of translations}

 \begin{lem}\label{lem:forAn}
 	Let $H\subset \aut(\A^n)$, $H\cong G_a^s$, $1\leq s\leq n-1$, be a
 	subgroup of automorphisms acting 
 	in a globally trivial way, and consider an equivariant
        isomorphism  $\A^n\cong \A^s\times V$ as in Definition
        \ref{defi:globtriv}. Then the normalizer of $H$ in
        $\aut(\A^n)$ has the form
 	\[
 	N_{\aut(\A^n)}(H)=\bigl\{(x,v)\mapsto \bigl(A x + g_{1}(v), g_2(v)\bigr)
 	\mathrel{:} A\in \operatorname{GL}_s(\Bbbk), g_1:V\to\A^s, g_2\in \aut(V)\bigr\}.
      \]
 \end{lem}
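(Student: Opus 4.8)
The plan is to prove the stated equality of sets by establishing the two inclusions separately; the nontrivial direction is ``$\subseteq$''. Throughout I would fix the equivariant isomorphism $\A^n\cong\A^s\times V$, write points as $(x,v)$ with $x\in\A^s$ and $v\in V$, and for $a\in\A^s$ let $h_a\in H$ be the element corresponding to $a$ under $H\cong\G_a^s$, so that $h_a(x,v)=(x+a,v)$; write $N:=N_{\aut(\A^n)}(H)$.

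For the inclusion ``$\supseteq$'' I would simply observe that, given $A\in\operatorname{GL}_s(\K)$, a morphism $g_1\colon V\to\A^s$ and $g_2\in\aut(V)$, the map $\psi(x,v)=\bigl(Ax+g_1(v),\,g_2(v)\bigr)$ is an automorphism of $\A^n$ (with inverse $(y,w)\mapsto\bigl(A^{-1}(y-g_1(g_2^{-1}(w))),\,g_2^{-1}(w)\bigr)$), and a direct computation gives $\psi h_a\psi^{-1}=h_{Aa}$ for every $a\in\A^s$; since $A$ is invertible this shows $\psi H\psi^{-1}=H$, i.e. $\psi\in N$.

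For ``$\subseteq$'', let $\psi\in N$ and write $\psi=(\psi_1,\psi_2)$ with $\psi_1\colon\A^n\to\A^s$ and $\psi_2\colon\A^n\to V$ its two components. First I would note that for each $a\in\A^s$ the automorphism $\psi h_a\psi^{-1}$ lies in $H$, hence equals $h_{b(a)}$ for a unique $b(a)\in\A^s$; writing out $\psi\circ h_a=h_{b(a)}\circ\psi$ in coordinates yields, for all $x,a\in\A^s$ and $v\in V$,
\[
\psi_1(x+a,v)=\psi_1(x,v)+b(a),\qquad \psi_2(x+a,v)=\psi_2(x,v).
\]
The second identity shows $\psi_2$ is independent of $x$, so $\psi_2(x,v)=g_2(v)$ for a morphism $g_2\colon V\to V$ (equivalently, $\psi$ descends to the geometric quotient of $\A^n$ by $H$, which is $V$; cf. the remark following Definition~\ref{defi:globtriv}). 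Setting $x=0$ in the first identity gives $b(a)=\psi_1(a,v)-\psi_1(0,v)$, so $b\colon\A^s\to\A^s$ is a morphism and $\psi_1(x,v)=g_1(v)+b(x)$ with $g_1(v):=\psi_1(0,v)$. Next I would use $h_{b(a+a')}=\psi h_{a+a'}\psi^{-1}=(\psi h_a\psi^{-1})(\psi h_{a'}\psi^{-1})=h_{b(a)+b(a')}$ to see that $b$ is additive; since $\car(\K)=0$, an additive polynomial self-map of $\A^s$ is $\K$-linear, so $b(x)=Ax$ for some $A\in M_s(\K)$, and therefore $\psi(x,v)=\bigl(Ax+g_1(v),\,g_2(v)\bigr)$. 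It remains to check $A\in\operatorname{GL}_s(\K)$ and $g_2\in\aut(V)$: the former because bijectivity of $\psi$ forces $x\mapsto Ax+g_1(v)$ to be injective for each fixed $v$; the latter because $\psi^{-1}\in N$ as well, hence has the same shape by the above, so comparing second components in $\psi\psi^{-1}=\psi^{-1}\psi=\Id$ exhibits a morphism inverse to $g_2$.

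I do not expect a real obstacle here. The only input beyond formal manipulation is the elementary fact that additive polynomial maps between affine spaces in characteristic zero are linear; the two places asking for a little care are the passage from the conjugation relation to the two functional equations and the verification that $g_2$ is a genuine automorphism, for which playing $\psi$ off against $\psi^{-1}$ is the clean trick.
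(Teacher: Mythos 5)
Your proof is correct and follows essentially the same route as the paper: read the conjugation relation in the two components to get the functional equations, conclude $\psi_2$ factors through $V$ and $\psi_1(x,v)=Ax+g_1(v)$, and obtain $g_2\in\aut(V)$ by playing $\psi$ against $\psi^{-1}$. The only cosmetic differences are that you derive linearity of $a\mapsto b(a)$ from additivity in characteristic zero (the paper invokes that an algebraic automorphism of $\G_a^s$ is given by some $A_f\in\operatorname{GL}_s(\K)$) and that you also check the easy inclusion $\supseteq$ explicitly.
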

 \begin{proof}
We describe an  automorphism $f\in\aut(\A^n)\cong \aut(\A^s\times V)$ as a
pair of
morphisms $(f_1,f_2)$, with $f_1:\A^s\times V\to \A^s$,
$f_2:\A^s\times V\to V$. Then
  $H=\bigl\{(t_a,\operatorname{Id}_V)\mathrel{:} a\in
 	\Bbbk^s\bigr\}$, and if $f=(f_1,f_2) \in N_{\aut(\A^n)}(H)$,
        we have that  $\sigma_f: H\to H$,
 	$\sigma_f(t_a,\operatorname{Id}_V)=
        f(t_a,\operatorname{Id}_V)f^{-1}$         is  a   
 	morphism of algebraic groups. It follows that  there exists
        $A_f\in GL_s(\K)$ such that $\sigma_f(t_a,\operatorname{Id}_V)=(t_{A_fa}, \operatorname{Id}_V)$ for all
        $a\in \K^s$ --- recall that
 	$\operatorname{char}(\Bbbk)=0$.

  From the equality
 	\[
 	f(t_a,\operatorname{Id}_V)=(t_{A_fa},
        \operatorname{Id}_V)f:\A^s\times V\to \A^s\times V
 	\]
 	we deduce that
 	\[
 	f_1\bigl(x+a, v)= f_1\bigl(x, v\bigr)  +A_fa \quad , \quad f_2(x+a,
 	v)= f_2\bigl(x, v\bigr) \text{ for all } a\in \Bbbk^s\,, v\in
        V.
 	\]

        Let  $f_1(x,v)=\bigl( f_{11}(x,v),\dots f_{1s}(x,v)\bigr)$ and consider the
 	maps  $f_{1j}$ as polynomials in $\Bbbk[V][x]$. Then
        $f_{ij}(x+a,v)=f_{ij}(x,v)+ (A_fa)_j$ and an easy
 	calculation  on the coefficients shows that
 	\[
          f_{1}(x,v)=  A_fx + g_1(v)\,, g_1:V\to \Bbbk^s
 	\]
 	
        On the other hand, since $f_2:\A^s\times V\to V $ is  a
        $\G_a^s$-invariant morphism and that $p_2:\A^s\times V\to V$
        is the geometric quotient, it follows that there exists
        $g_2:V\to V$ such that $f_2(x,v)=g_2\circ p_2(x,v)=g_2(v)$.
          
                        Finally, applying the same reasoning to the
                        inverse $f^{-1}\in   N_{\aut(\A^n)}(H)$, we deduce that  $g_2\in\aut(V)$.
 \end{proof}

 \begin{pro}\label{pro:desctrans}
 	Let $D$ be a simple derivation and $H\subset \aut(D)$ a normal
 	 subgroup, $H\cong \G_a^s$, $s\geq 1$, that acts in a globally
         trivial way.  Consider an equivariant
        isomorphism  $\A^n\cong \A^s\times V$ as in Definition
        \ref{defi:globtriv} and let 
        $\overline{D}=D|_{_{\K[V]}}:\K[V]\to\K[V]$ (see Proposition
        \ref{prop:tras0}). If we identify  $\aut(D)$ as a subgroup of
        $\aut(\A^s\times V)$ and $\aut(\overline{D})$ as a subgroup of
        $\aut(V)$, then  
 	\[
 	\aut(D)\subset \bigl\{ \bigl(Ax+g_1(v), g_2(v)\bigr)\mathrel{:}
 	A\in\operatorname{GL}_s(\Bbbk)\,,\ g_1:V\to\A^s\,,\ g_2\in \aut(\overline{D})\bigr\}.
 	\]
 \end{pro}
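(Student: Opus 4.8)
The plan is to reduce the statement to the normalizer computation of Lemma~\ref{lem:forAn} and then to check that the ``$V$-component'' of an element of $\aut(D)$ commutes with $\overline D$.

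First I would note that $H\cong\G_a^s$ is connected and contains the identity, hence $H\subseteq\aut(D)^0$; by Theorem~\ref{thm:dimaut0} this forces $s\le n-2\le n-1$, so $V$ is not a point and Lemma~\ref{lem:forAn} applies. Since $H$ is normal in $\aut(D)$, every $\varphi\in\aut(D)$ lies in $N_{\aut(\A^n)}(H)$, so by Lemma~\ref{lem:forAn} there are $A\in\operatorname{GL}_s(\K)$, a morphism $g_1\colon V\to\A^s$ and $g_2\in\aut(V)$ with $\varphi(x,v)=\bigl(Ax+g_1(v),\,g_2(v)\bigr)$. What remains is to prove that $g_2\in\aut(\overline D)$.

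For that I would invoke Proposition~\ref{prop:tras0}: it gives $\operatorname{Im}D\subset B=\pol^H=\K[V]$, so in particular $D(B)\subset B$, the restriction $\overline D=D|_B$ is a genuine derivation of $\K[V]$, and $\aut(\overline D)$ is identified with $\{g\in\aut(V):g\circ\overline D=\overline D\circ g\}$ as automorphisms of $\K[V]$. From the explicit shape above, for $b\in B=\K[V]$ one has $\varphi(b)=b\circ g_2\in B$, since this substitution does not involve the $x$-coordinates; thus $\varphi$ maps $B$ into $B$, $\varphi|_B$ is the automorphism of $\K[V]$ induced by $g_2$, and applying the same to $\varphi^{-1}$ shows $\varphi(B)=B$. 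Finally I would restrict the relation $\varphi\circ D=D\circ\varphi$ (which is precisely the condition $\varphi\in\aut(D)$) to $B$: for $b\in B$ we have $D(b)=\overline D(b)\in B$ and $\varphi(b)\in B$, hence $g_2\bigl(\overline D(b)\bigr)=\varphi\bigl(D(b)\bigr)=D\bigl(\varphi(b)\bigr)=\overline D\bigl(g_2(b)\bigr)$. So $g_2$ commutes with $\overline D$, i.e.\ $g_2\in\aut(\overline D)$, and $\varphi$ belongs to the asserted set.

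I do not expect a genuine obstacle here: all the substance sits in Lemma~\ref{lem:forAn} and Proposition~\ref{prop:tras0}, and the remaining work is the bookkeeping needed to see that $\varphi$ preserves the invariant ring $B$ and that $D$ really restricts to $\overline D$ on it. One could instead argue more structurally that $\varphi$, normalizing $H$, descends to an automorphism of the geometric quotient $\A^n/H\cong\spec(B)$ which is intertwined with the derivation induced by $D$ on $B$; the coordinates of Lemma~\ref{lem:forAn} merely make the identification of the descended map with $g_2$ immediate.
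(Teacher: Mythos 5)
Your proof is correct and takes essentially the same route as the paper's: normality of $H$ places every $\varphi\in\aut(D)$ in $N_{\aut(\A^n)}(H)$, Lemma \ref{lem:forAn} gives the form $\bigl(Ax+g_1(v),g_2(v)\bigr)$, and restricting the relation $\varphi D=D\varphi$ to $\K[V]$, using $\operatorname{Im}D\subset\K[V]$ from Proposition \ref{prop:tras0}, yields $g_2\in\aut(\overline D)$. Your preliminary check that $s\le n-2$ (so that Lemma \ref{lem:forAn} applies) is a small addition that the paper leaves implicit.
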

 \begin{proof}
 	   By Lemma 
        \ref{lem:forAn},  follows  that
 	\[
 	\aut(D)\subset \bigl\{ \bigl(Ax+g_1(v), g_2(v)\bigr)\mathrel{:}
 	A\in\operatorname{GL}_s(\Bbbk)\,,\ g_1:V\to\A^s\,,\ g_2\in \aut(V)\bigr\},
 	\]
 	so it remains to prove that if $f=\bigl(Ax+g_1(v),
        g_2(v)\bigr) \in \aut(D)$, then $g_2\in \aut(\overline{D})$
        --- that is, after identification of $\aut(\A^n)$ with
        $\aut\bigl(\pol\bigr)$, that $Dg_2(p)=g_2D(p)$ for all
        $p\in\K[V]$. But by  definition, we have that
        \[
\begin{split}
         \overline{D}g_2(p)=&D
         \bigl(p\bigl(g_2(v)
         \bigr)\bigr)=D
         \bigl(p\bigl(f_2(x,v)
         \bigr)\bigr)=D
         \bigl(p\bigl(f(x,v)
         \bigr)\bigr)=Df(p) \\
         g_2 \overline{D}(p)= &fD(p),
\end{split}\]
where we consider $p \in\K[V]\subset\K[V][x]$, and the result follows.
 \end{proof}
 
 \begin{rem}\label{rem:homind}
  If $X,Y$ are affine algebraic varieties, then 
  $\operatorname{Hom}(X,Y)$ inherits a structure of ind-variety (see
  for example
  \cite[Lemma 3.1.4]{kn:furterkraft}). In the notations of Proposition
  \ref{pro:desctrans},  we
  identify  $\aut(\A^n)=\aut(\A^s\times V)$ as a subset of $\operatorname{Hom}(\A^s\times V, \A^s)\times
  \operatorname{Hom}(\A^s\times V, V)$, and restrict the projection
  over the second  
  coordinate  to $ \aut(D)$. Then $p_2
 \bigl( \aut(D)\bigr)$ identifies with a subgroup of $\aut(\overline{D})$,
 in such a way that the corresponding map  $\varphi:\aut(D)\to
 	\aut(\overline{D})$,  is a morphism of ind-groups.
        It follows that
 	\[
 	\aut(D)^0\subset \bigl\{ \bigl(Ax+g_1(v), g_2(v)\bigr)\mathrel{:}
 	A\in\operatorname{GL}_s(\Bbbk)\,,\ g_1:V\to \A^s\,,\ g_2\in \aut(\overline{D})^0\bigr\}.
 	\]
 \end{rem}

 \begin{thm}\label{th:AutAlg}
 	Let $D$ be a simple derivation such that $\aut(D)$ contains a
 	non trivial normal subgroup $H\cong \A^s$, $s\geq 1$,  such that
$H$ acts on a globally trivial way. Consider  an equivariant
        isomorphism  $\A^n\cong \A^s\times V$ as in Proposition
        \ref{pro:desctrans} and let
        $\overline{D}=D|_{_{\K[V]}}$. Assume moreover that the
        restriction $\overline D:\Bbbk[V]\to \Bbbk[V]$ is such that
 	$\aut(\overline{D})$ is algebraic. 
 	
 	Then $\aut(D)$ is algebraic; in particular,  $\aut(D)=\aut(D)^0$.
 \end{thm}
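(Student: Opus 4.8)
The plan is to bound the total degree of every element of $\aut(D)$ by one fixed constant; by the characterization of algebraicity recalled in the Introduction (a closed subgroup of $\aut\bigl(\pol\bigr)$ is algebraic precisely when it lies in some $\aut\bigl(\pol\bigr)_p$), this forces $\aut(D)$ to be algebraic. Fix the equivariant isomorphism $\A^n\cong\A^s\times V$, so that $\pol=B[y_1,\dots,y_s]$ with $B=\K[V]=\pol^H$ and $y_1,\dots,y_s$ algebraically independent over $B$. By Proposition~\ref{prop:tras0}, $\operatorname{Im}D\subset B$; put $c_j:=D(y_j)\in B$. By Proposition~\ref{pro:desctrans} every $\varphi\in\aut(D)$ has the shape $\varphi=\bigl(Ay+g_1(v),\,g_2(v)\bigr)$ with $A\in\operatorname{GL}_s(\K)$, $g_1\colon V\to\A^s$ and $g_2\in\aut(\overline D)$, and by Remark~\ref{rem:homind} the map $\varphi\mapsto g_2$ is a morphism of ind-groups $\aut(D)\to\aut(\overline D)$.

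First I would write out the commutation $\varphi\circ D=D\circ\varphi$ on the generators. On $B$ it just re-expresses $g_2\in\aut(\overline D)$; on $y_j$, using $D(y_j)=c_j\in B$, it gives $c_j\circ g_2=\sum_kA_{jk}c_k+\overline D\bigl((g_1)_j\bigr)$, i.e.
\[
\overline D\bigl((g_1)_j\bigr)=c_j\circ g_2-\sum_{k=1}^{s}A_{jk}c_k ,\qquad j=1,\dots,s .
\]
(Incidentally, putting $g_2=\operatorname{Id}_V$ here and using that $D$ is simple --- whence $c_1,\dots,c_s$ are linearly independent modulo $\operatorname{Im}\overline D$, since $\sum\lambda_kc_k=\overline D(h)$ would give $h-\sum\lambda_ky_k\in\ker D=\K$ --- forces $A=I$ and then $(g_1)_j\in\K$; so the kernel of $\varphi$ is exactly $H$, though this is not needed below.)

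Next I would harvest the degree estimates. Since $\aut(\overline D)$ is algebraic, it acts regularly on the affine variety $V$, hence for every $b\in B$ the set $\{\,b\circ g_2 : g_2\in\aut(\overline D)\,\}$ spans a finite-dimensional subspace of $B$. Taking for $b$ the coordinate functions of a finite embedding $V\hookrightarrow\A^N$ shows $\deg g_2(v)$ is bounded independently of $\varphi$; taking $b=c_j$ and using the displayed identity shows that the $c_j\circ g_2-\sum_kA_{jk}c_k$ all lie in one fixed finite-dimensional subspace of $B$, say of degree $\le m$. The one genuinely new ingredient is then: because $\overline D$ is simple, $\ker\overline D=\K$, so $\overline D$ embeds $\{\,h\in B:\deg\overline D(h)\le m\,\}/\K$ into $B_{\le m}$; therefore $\{\,h\in B:\deg\overline D(h)\le m\,\}$ is finite-dimensional. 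As each $(g_1)_j$ lies in this space, $\deg g_1(v)$ is bounded independently of $\varphi$ as well. Since the entries of $A$ do not affect the total degree of $Ay+g_1(v)$, it follows that in the coordinates $(y_1,\dots,y_s,v_1,\dots,v_N)$ every $\varphi\in\aut(D)$ has total degree at most a fixed constant. Boundedness of a subgroup of $\aut(\A^n)$ does not depend on the choice of polynomial coordinates, so $\aut(D)\subset\aut\bigl(\pol\bigr)_p$ for some $p$; hence $\aut(D)$ is algebraic, and then by Theorem~\ref{thm:aut0unip} it is unipotent, hence connected, so $\aut(D)=\aut(D)^0$.

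The step I expect to be the crux is the passage from ``$\overline D\bigl((g_1)_j\bigr)$ has bounded degree'' to ``$(g_1)_j$ has bounded degree'': for an arbitrary derivation this is false (already for locally nilpotent ones, whose kernel is huge), and it is precisely the simplicity of $\overline D$ --- equivalently the finiteness $\ker\overline D=\K$ --- that makes $\{\,h:\deg\overline D(h)\le m\,\}$ finite-dimensional. A secondary, purely bookkeeping, matter is transporting the bound from the $(y,v)$-coordinates to the standard coordinates of $\pol$, which is routine because any two finite $\K$-algebra generating sets of $\pol$ are polynomial expressions in one another.
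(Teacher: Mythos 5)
Your proof is correct, but it reaches the conclusion by a genuinely different mechanism than the paper, even though both start from Proposition \ref{pro:desctrans}. The paper fixes a single $f=\bigl(Ax+g_1(v),g_2(v)\bigr)\in\aut(D)$ and computes its powers, $f^\ell=\bigl(A^\ell x+\sum_{i=0}^{\ell-1}A^{\ell-i}g_1(g_2^i(v)),\,g_2^\ell(v)\bigr)$; since $\aut(\overline D)$ is algebraic the degrees of the $g_2^i$ are uniformly bounded, so each $f$ has powers of bounded degree, i.e.\ every element of $\aut(D)$ is algebraic, and the paper then concludes through the Corollary of Section \ref{sec:autsimpder} (every element algebraic implies $\aut(D)$ algebraic and connected), which rests on Theorems \ref{thm:aut0conn} and \ref{thm:aut0unip}. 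You never use the formula for $f^\ell$: instead you extract from $\varphi D=D\varphi$ the identity $\overline D\bigl((g_1)_j\bigr)=c_j\circ g_2-\sum_k A_{jk}c_k$ and combine it with the observation that $\ker\overline D=\K$ makes $\overline D^{-1}(B_{\le m})$ finite dimensional, which bounds $\deg g_1$ uniformly over the whole group; together with the uniform bound on $g_2$ coming from the algebraicity of $\aut(\overline D)$ this places all of $\aut(D)$ in a single filtration piece $\aut\bigl(\pol\bigr)_p$, and Theorem \ref{thm:aut0unip} then gives unipotence, hence connectedness, exactly as in the paper. Your route buys an explicit uniform degree bound on the group (and, as your correct aside shows, the identification of the kernel of $\aut(D)\to\aut(\overline D)$ with $H$), at the modest cost of the bookkeeping with degrees on $\K[V]$ and the change of generating sets, which is indeed routine; the paper's route is shorter because the equivalence between algebraicity of the group and algebraicity of each of its elements is already available. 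Note also where simplicity enters in each case: the paper uses it only through that equivalence, while you use it directly via $\ker\overline D=\K$, which is precisely what your crux step requires.
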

 \begin{proof}
 	By Proposition \ref{pro:desctrans}, if $f\in \aut(D)$ then
 	$    f(x,v)=\bigl(Ax+g_1(v), g_2(v)\bigr)$, with
 	$A\in\operatorname{GL}_s(\Bbbk)$ and $g_2\in \aut(\overline{D})$.
 	By induction on the number of compositions, we deduce that for $\ell\in\N$
 	\[
 	f^\ell(x,v)=\Bigl( A^\ell x+ \sum_{i=0}^{\ell-1}
          A^{\ell-i}g_1\bigl(g_2^{i}(v)\bigr)\,, \,  g_2^\ell(v)\Bigr).
 	\]

Consider the equivariant isomorphism  $\psi:\A^n\to \A^s\times V$ and
the isomorphism of ind-groups $\aut(\A^n)\to \aut(\A^s\times V)$ given by conjugation by $\psi$. Then 
$ \psi^{-1}(\operatorname{Id}_{\A^s}, g_2)
\psi: \A^n\to \A^n$ is an algebraic
automorphism, and it follows that, under identification
by $\psi$,  the family $\bigl\{\bigl(A^\ell x+ \sum_{i=0}^{\ell-1}
          A^{\ell-i}g_1(g_2^{i}(v)), g_2(v)\bigr):\A^n\to \A^n\mathrel{:}
          \ell\geq 0\bigr\}$ has bounded degree,
          and therefore $f$ is algebraic. 
      \end{proof}

      \begin{cor}\label{cor:castrasn-2}
 	Let $D\in\der\bigl(\pol\bigr)$  such that
        $\aut(D)^0=\G_a^{n-2}$, $n\geq 2$, and assume moreover that 
$\aut(D)^0$ acts by translations. Then $\aut(D)=\aut(D)^0$.
\end{cor}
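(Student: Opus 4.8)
The plan is to reduce this to Theorem \ref{th:AutAlg} by exhibiting the required normal subgroup acting in a globally trivial way and checking that the restricted derivation has trivial, hence algebraic, automorphism group. First I would apply Corollary \ref{cor:tras} with $s=n-2$ and $H=\aut(D)^0$, the group of translations: this produces coordinates in which $D=\sum_{i=1}^n a_i\frac{\partial}{\partial x_i}$ with $a_i\in\K[x_{n-1},x_n]$ for all $i$, and $H$ acts by translations on the first $n-2$ coordinates $x_1,\dots,x_{n-2}$. In particular $\pol=\K[x_{n-1},x_n][x_1,\dots,x_{n-2}]$, the equivariant splitting $\A^n\cong\A^{n-2}\times V$ holds with $V=\spec\bigl(\K[x_{n-1},x_n]\bigr)\cong\A^2$, and the restriction $\overline{D}=D|_{_{\K[x_{n-1},x_n]}}=a_{n-1}\frac{\partial}{\partial x_{n-1}}+a_n\frac{\partial}{\partial x_n}$ is a simple derivation of $\K[x_{n-1},x_n]$ by Remark \ref{rem:restsim}.

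Next I would invoke Theorem \ref{th:MePan}$(b)$: since $\overline{D}$ is a simple derivation in two variables, $\aut(\overline{D})=1$, which is trivially an algebraic group. It remains only to verify that $H=\aut(D)^0$ is \emph{normal} in $\aut(D)$, which is exactly the hypothesis needed to apply Theorem \ref{th:AutAlg}; but this is immediate because the connected component of the identity of any ind-group is a normal subgroup (this is already recorded in the proof of Theorem \ref{thm:aut0conn}, where $\aut(D)^0$ is called a connected normal algebraic subgroup of $\aut(D)$). With $H\cong\G_a^{n-2}$ normal, acting globally trivially (indeed by translations), and $\aut(\overline{D})=1$ algebraic, Theorem \ref{th:AutAlg} applies verbatim and yields that $\aut(D)$ is algebraic, with $\aut(D)=\aut(D)^0$.

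I do not expect a genuine obstacle here; the statement is essentially a packaging of Theorem \ref{th:AutAlg} in the special case where the group of translations has the maximal dimension $n-2$ allowed by Theorem \ref{thm:dimaut0}, so that the quotient variety $V$ is a plane and one can appeal to the two-dimensional classification. The only point requiring a word of care is to make sure the globally-trivial action hypothesis of Theorem \ref{th:AutAlg} is met: since $H$ acts by translations by assumption, it acts in a globally trivial way by definition (Definition \ref{defi:globtriv} and the subsequent Remark), so this is automatic. Thus the corollary follows in a couple of lines.
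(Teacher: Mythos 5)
Your proposal is correct and follows essentially the same route as the paper, which proves the corollary as a direct application of Theorem \ref{th:AutAlg} combined with the triviality of the isotropy of a simple two-variable derivation (Theorem \ref{th:MePan}); you merely make explicit the coordinates from Corollary \ref{cor:tras} and the normality of $\aut(D)^0$. The only (harmless) point left implicit is the degenerate case $n=2$, where $s=0$ falls outside the hypotheses of Theorem \ref{th:AutAlg} but the statement is just Theorem \ref{th:MePan}$(b)$.
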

\begin{proof}
This is a direct application of Theorem \ref{th:AutAlg}, together with
the fact that a simple derivation of $\K[u,v]$ has trivial isotropy
(Theorem \ref{th:MePan}).
 \end{proof}

\begin{exa}
Notice that in particular,  Corollary \ref{cor:castrasn-2} shows that
the derivations $D_I$  exhibited   in  Example \ref{exa_maximal}  are
such that $\aut(D_I)$ is algebraic (see also  Corollary
\ref{exa_maximal}).    \end{exa}

 \subsection{Simple derivations of $\pol$ in small dimension  and their
        automorphisms}\ %

In this section we study the automorphisms group of a simple derivation
of $\pol$, with $n=3,4$, the cases $n=1,2$ being well known:

\beginenum

\item A derivation 
$D\in\der\bigl(\K[x]\bigr)$ is simple if and only if it is locally
nilpotent, and in this case $\aut(D)=\{e^{tD}\mathrel{:} t\in\K\}$.

\item Any simple derivation of
$\K[x,y]$ has trivial isotropy (see Theorem \ref{th:MePan}). 
\end{enumerate}

 \subsection*{Simple derivations of $\K[x_1,x_2,x_3]$}\ %

Let $D$ be a simple derivation of
$\Bbbk[x_1,x_2,x_3]$. Our objective is to show that $\aut(D)$ is
either isomorphic to $\G_a$ or possible a countable discrete group:
More precisely,  $\aut(D)^0$ is unipotent
and of dimension $0$ or $1$  by  Theorem
  \ref{thm:dimaut0}; the following theorem shows that in this last case, $\aut(D)=\aut(D)^0$, its action being by translations.

\begin{thm}\label{thm:autDim3}
	Let $D\in\der\bigl(\Bbbk[x_1,x_2,x_3]\bigr)$ be a simple derivation such that $\dim \aut(D)^0=1$. Then
	$\aut(D)=\aut(D)^0$. Moreover, there exist coordinates such that
	\begin{equation}
		\label{eqn:A3}
		\begin{split}
			\aut (D) &=\bigl\{ (x_1+a, x_2,x_3)\mathrel{:} a\in\Bbbk\}\\
			D &= \sum a_i(x_2,x_3)\frac{\partial}{\partial x_i}
		\end{split}
	\end{equation}
\end{thm}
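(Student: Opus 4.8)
The plan is to show that $\aut(D)^0$ acts by translations --- the only delicate point --- and then to conclude mechanically from Corollary \ref{cor:castrasn-2} and Proposition \ref{prop:tras0}. First I would record the structure of $G:=\aut(D)^0$: by Theorem \ref{thm:aut0conn} it is an affine algebraic group acting freely on $\A^3$, by Theorem \ref{thm:aut0unip} it is unipotent, and since $\dim G=1$ and $\K$ has characteristic zero this forces $G\cong \G_a$. Fix a locally nilpotent derivation $\Delta$ of $\K[x_1,x_2,x_3]$ with $G=\{e^{t\Delta}\mathrel{:}t\in\K\}$; since every element of $G$ commutes with $D$ we have $[D,\Delta]=0$. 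By Proposition \ref{prop_locally} the induced $\G_a$-action of $G$ on $\A^3$ is locally trivial and proper, in particular free.

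The heart of the argument is the passage from ``free'' to ``by translations''. Here I would invoke the theorem of Kaliman that every free $\G_a$-action on $\A^3$ is a translation (proved for $\K=\C$, and valid over any algebraically closed field of characteristic zero by the Lefschetz principle; alternatively one can combine the Deveney--Finston structure theorem for proper $\G_a$-actions on $\A^3$ with the two-dimensional cancellation theorem of Fujita and Miyanishi--Sugie, applied to the geometric quotient). Thus some $\psi\in\aut(\A^3)$ conjugates $G$ onto $\{(y_1+a,y_2,y_3)\mathrel{:}a\in\K\}$; and since the statement to be proved is invariant under replacing $D$ by a conjugate, I may assume from now on that $\aut(D)^0=G=\bigl\{(x_1+a,x_2,x_3)\mathrel{:}a\in\K\bigr\}$, so that $\Delta=\frac{\partial}{\partial x_1}$ and $G$ acts by translations with $\K[x_1,x_2,x_3]^{G}=\K[x_2,x_3]$.

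With this normalization the conclusion is immediate. Since $\aut(D)^0=\G_a=\G_a^{\,n-2}$ (here $n=3$) and this group acts by translations, Corollary \ref{cor:castrasn-2} gives $\aut(D)=\aut(D)^0=\bigl\{(x_1+a,x_2,x_3)\mathrel{:}a\in\K\bigr\}$. Finally, $G$ acts globally trivially with ring of invariants $B=\K[x_2,x_3]$, so Proposition \ref{prop:tras0} yields $\operatorname{Im}D\subset B$; equivalently, $[\frac{\partial}{\partial x_1},D]=0$ forces each $D(x_i)$ to be independent of $x_1$. Hence $D=\sum_{i=1}^3 a_i(x_2,x_3)\frac{\partial}{\partial x_i}$, which is precisely the form displayed in \eqref{eqn:A3}.

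The main obstacle is the middle step: with the techniques internal to this paper there is no way to recognise that the free one-parameter unipotent group $\aut(D)^0\cong\G_a$ acts as a genuine translation of three-space, and one is forced to appeal to the (deep) classification of free $\G_a$-actions on $\A^3$. Everything else is bookkeeping --- once $\aut(D)^0$ is known to act by translations, the theorem drops out of Corollary \ref{cor:castrasn-2} (whose proof runs through Theorem \ref{th:AutAlg} and the triviality of the isotropy of a simple derivation of $\K[u,v]$, Theorem \ref{th:MePan}) together with Proposition \ref{prop:tras0}.
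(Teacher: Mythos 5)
Your proof is correct and follows essentially the same route as the paper: the key step in both is Kaliman's theorem that free $\G_a$-actions (equivalently, fixed-point-free unipotent automorphisms) on $\A^3$ are conjugate to translations, after which $\aut(D)=\aut(D)^0$ follows from Corollary \ref{cor:castrasn-2} and the form of $D$ from Proposition \ref{prop:tras0} (the paper packages this last bookkeeping via Proposition \ref{pro:desctrans}, a negligible difference).
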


\begin{proof}
  If  $u\in \aut(D)^0$ is a non trivial automorphism, then by
  Proposition \ref{prop:identity}, $u$ has no fixed point but, by a
  result of Kaliman (see \cite{kn:kaliman}),  a non trivial unipotent
  automorphism $u\in \aut(\A^3)$ without fixed point is conjugated to
  a  translation.  It follows from Proposition
\ref{pro:desctrans}  that there exist coordinates as in Equation
\eqref{eqn:A3} but for $\aut(D)^0$. Hence, it remains to prove that
$\aut(D)=\aut(D)^0$; this is the content of Corollary \ref{cor:castrasn-2}.
\end{proof}

  \subsection*{A family of simple derivations of $\Bbbk[x_1,x_2,x_3,x_4]$}\ %

Next, we consider the four dimensional affine space and
describe the automorphisms group of a simple 
derivation $D\in \der \bigl(\Bbbk[x_1,x_2,x_3,x_4]\bigr)$ that admits
a linear coordinate.

\begin{lem} \label{lem:deltatrans}Let $D\in \der\bigl(\pol\bigr)$ be a
  simple derivation that admits a linear coordinate  $s$. Let
  $\phi\in\aut(D)^0$ and $\Delta$ be a locally nilpotent derivation such
  that  $\phi=e^\Delta$  (see Remark \ref{rem:GaandLND}). Then $\Delta(s)\in\K$. In
  particular, then either $\phi(s)=s$ or $\phi$ is a translation along the
  direction of $s$. 
\end{lem}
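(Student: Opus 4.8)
The key observation is that the hypothesis $D(s)=1$ implies $Ds=sD$ in a suitable sense; more precisely, writing $D=\partial/\partial s+\sum_{i=2}^n a_i\partial/\partial s_i$ in the coordinate system $\K[s,s_2,\dots,s_n]=\pol$ provided by Definition \ref{defi:coord}, I want to track what the commutation relation $[D,\Delta]=0$ forces on the polynomial $\Delta(s)$. The plan is first to apply $D$ to $\Delta(s)$ and exploit $[D,\Delta]=0$: one gets $D\bigl(\Delta(s)\bigr)=\Delta\bigl(D(s)\bigr)=\Delta(1)=0$. Since $D$ is simple, $\ker(D)=\K$ (Remark \ref{rem:darboux}), hence $\Delta(s)\in\K$. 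This is the whole content of the first assertion, and it is short; the only thing to be careful about is that $\Delta\in\der\bigl(\pol\bigr)$ really does commute with $D$, which holds because $\phi=e^{\Delta}\in\aut(D)$ means $e^{t\Delta}D=De^{t\Delta}$ for all $t$ (here we use that $e^{\Delta}$ lies in $\aut(D)^0$, so it sits inside a one-parameter unipotent subgroup; differentiating at $t=0$ gives $[D,\Delta]=0$, exactly as in the proof of Proposition \ref{prop_locally}).

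For the second assertion, write $c=\Delta(s)\in\K$. If $c=0$, then $s\in\ker(\Delta)=\pol^{\G_a}$ where $\G_a=\{e^{t\Delta}\}$, so $\phi(s)=e^{\Delta}(s)=s$. If $c\neq 0$, then after rescaling (replace $\Delta$ by $c^{-1}\Delta$, which generates the same one-parameter group up to reparametrization, or simply keep $c$ and compute directly) one has $\Delta^k(s)=0$ for all $k\geq 2$, since $\Delta(s)=c$ is a constant and $\Delta(c)=0$. Therefore $e^{t\Delta}(s)=\sum_{k\geq 0}\frac{t^k}{k!}\Delta^k(s)=s+tc$, which is precisely a translation along the $s$-direction (in the coordinates $s,s_2,\dots,s_n$, the automorphism $e^{\Delta}$ sends $s\mapsto s+c$). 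So $\phi=e^{\Delta}$ acts on the coordinate $s$ either trivially or by the translation $s\mapsto s+c$.

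I do not expect a serious obstacle here; the statement is essentially a one-line consequence of simplicity together with the commutation relation. The only point requiring a little care is justifying $[D,\Delta]=0$ from $\phi\in\aut(D)^0$: one should note that $\aut(D)^0$ is a unipotent algebraic group (Theorem \ref{thm:aut0conn}), so any $\phi\in\aut(D)^0$ can be written as $e^{\Delta}$ for a unique locally nilpotent $\Delta\in\operatorname{Lie}\bigl(\aut(D)^0\bigr)\subset\der\bigl(\pol\bigr)$, and $\operatorname{Lie}\bigl(\aut(D)^0\bigr)$ consists of derivations commuting with $D$ (this is the Lie-algebra version of the isotropy condition, obtained by differentiating $e^{t\Delta}D e^{-t\Delta}=D$ at $t=0$). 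Granting this, the argument above is complete.
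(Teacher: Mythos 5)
The first assertion is handled exactly as in the paper: $[D,\Delta]=0$ gives $D\bigl(\Delta(s)\bigr)=\Delta\bigl(D(s)\bigr)=\Delta(1)=0$, and simplicity forces $\Delta(s)\in\K$; your extra care in justifying $[D,\Delta]=0$ via the one-parameter subgroup $\{e^{t\Delta}\}\subset\aut(D)^0$ is welcome and matches what the paper implicitly uses.

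There is, however, a gap in the second assertion. In the case $\Delta(s)=c\neq 0$ you only compute $e^{t\Delta}(s)=s+tc$, i.e.\ you control the action of $\phi$ on the single coordinate $s$. But the conclusion ``$\phi$ is a translation along the direction of $s$'' is meant (and is later used, in Lemma \ref{lem:TransDim4} and Theorem \ref{thm:dim4}) as the statement that $\phi$ is an actual translation of $\A^n$ in a suitable coordinate system containing $s$; knowing $\phi(s)=s+c$ alone does not give this, since $\phi$ could a priori act in a complicated way on the remaining coordinates (compare $\phi=(x_1+1,\,x_2+f(x_1,x_3,\dots),\,\dots)$, which satisfies $\phi(x_1)=x_1+1$ without being a translation on the nose). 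The missing step is the slice argument of the paper: since $\Delta(c^{-1}s)=1$, the polynomial $c^{-1}s$ is a slice for $\Delta$, so by the slice theorem $\pol=\ker(\Delta)[c^{-1}s]$; because $c^{-1}s$ is moreover a coordinate, $\ker(\Delta)\cong\pol/(c^{-1}s)$ is a polynomial ring in $n-1$ variables, say $\K[g_2,\dots,g_n]$, and then $(c^{-1}s,g_2,\dots,g_n)$ is a coordinate system in which $\Delta=\partial/\partial(c^{-1}s)$ and $\phi=e^{\Delta}$ is literally the translation $c^{-1}s\mapsto c^{-1}s+1$ fixing $g_2,\dots,g_n$. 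You should add this step (or an equivalent one) to upgrade ``$\phi(s)=s+c$'' to ``$\phi$ is a translation''.
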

\begin{proof}
	Since $\phi \in \aut(D)$, then   $[D,\Delta]=0$ and therefore
        $D\bigl(\Delta(s)\bigr)=0$, so $\Delta(s)\in\K$.  
	
	If $\Delta(s)=0$, then $\phi(s)=s$.
	On the other hand, if $\Delta(s)=c\neq 0$, then
        $\Delta(c^{-1}s)=1$, i.e. $c^{-1}s$ is a slice for
        $\Delta$. Hence,
        $\pol=\ker(\Delta)[c^{-1}s]$. Since $c^{-1}s$ is a
        coordinate we may, up to conjugation, assume $x_1=c^{-1}s$. Thus
        $\ker(\Delta)=\K[x_2,\ldots,x_{n}]$,
        i.e. $\phi$ is 
        translation along the $s$ coordinate.
\end{proof}

Now we specialize Lemma \ref{lem:deltatrans} to the case where
$n=4$. We begin by a useful result.  

\begin{lem}\label{lem:TransDim4}
	Let $D\in \der\bigl(\K[x_1,x_2,x_3,x_4]\bigr)$ be a simple
        derivation that admits a linear coordinate. Then every
        nontrivial element of $\aut(D)^0$ is 
        conjugate to a translation. 
\end{lem}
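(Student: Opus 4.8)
The plan is to let $\phi\in\aut(D)^0$ be nontrivial and write $\phi=e^\Delta$ for a locally nilpotent derivation $\Delta\in\der\bigl(\K[x_1,x_2,x_3,x_4]\bigr)$, as in Remark \ref{rem:GaandLND}. By hypothesis $D$ admits a linear coordinate $s$, so after a change of coordinates we may assume $x_1=s$ and $D=\partial/\partial x_1+\sum_{i=2}^4 a_i\partial/\partial x_i$ with $a_i\in\K[x_1,\dots,x_4]$. First I would invoke Lemma \ref{lem:deltatrans}: since $\phi\in\aut(D)^0$, we get $\Delta(s)\in\K$, and hence either $\phi$ is already a translation along $s$ (in which case we are done) or $\phi(s)=s$, i.e. $\Delta(s)=\Delta(x_1)=0$. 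So the substance of the proof is to rule out, or rather to treat, the case $\Delta(x_1)=0$: here $\Delta$ restricts to a nonzero locally nilpotent derivation of $\K[x_1][x_2,x_3,x_4]$ that kills $x_1$, and one must show the corresponding $\G_a$-action is conjugate to a translation on $\A^4$.

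The key point is that the $\G_a=\{e^{t\Delta}\}$ sitting inside $\aut(D)$ acts \emph{freely} on $\A^4$ (Proposition \ref{prop:identity}, or Theorem \ref{thm:aut0conn}), and by Proposition \ref{prop_locally} this action is locally trivial and proper. The plan is then: after specializing $x_1$ we view $\Delta$ over the PID $\K[x_1]$ — more usefully, since $\Delta(x_1)=0$ we may localize and think of $\Delta$ as a locally nilpotent derivation of $K[x_2,x_3,x_4]$ with $K=\K(x_1)$, a free $\G_a$-action of a three-dimensional affine space over $K$. A free $\G_a$-action on $\A^3$ over a field (of characteristic zero) whose plinth ideal generates the unit ideal is globally trivial — this is the combination of Lemma \ref{lem:FreuTriviality} with the fact that over $\A^3$ local triviality upgrades to global triviality (one can appeal to the structure results underlying Theorem \ref{thm:derksenetalorig}, or to the classical fact that algebraic $\G_a$-actions on $\A^3$ whose quotient is factorial are translations). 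The properness from Proposition \ref{prop_locally} feeds exactly the hypothesis $\langle\operatorname{pl}(\Delta)\rangle=\pol$ needed for Lemma \ref{lem:FreuTriviality}. Thus, over $K$, $\Delta$ is conjugate to $\partial/\partial x_2$, and then a descent argument (clearing denominators, using that the equivariant isomorphism can be chosen over $\K[x_1]$ after localization and that $\G_a$ acts freely on all of $\A^4$) gives that $\phi$ itself is conjugate to a translation of $\A^4$.

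The main obstacle I expect is the descent step: passing from ``$\Delta$ is a translation over the generic fiber $K=\K(x_1)$'' to ``$\phi=e^\Delta$ is conjugate to a translation over $\A^4$.'' This is where the local triviality and properness of the $\G_a$-action on $\A^4$ (Proposition \ref{prop_locally}) must be used in an essential way, together with the fact that the geometric quotient $\A^4/\G_a$ is a smooth affine threefold with the same structure; one needs to know that local triviality over $\A^4$ plus the freeness of the action forces the action to be a translation in suitable coordinates (again via the $(n-1)$-dimensional structure theorem, applied to $\G_a$ acting with three-dimensional quotient isn't directly it — rather one uses that a locally trivial free $\G_a$-bundle over a smooth contractible affine base with trivial Picard group is trivial). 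I would isolate this as the crux and, if a clean reference (Kaliman-type result in dimension $4$ is not available) is lacking, fall back on combining Lemma \ref{lem:FreuTriviality} with the explicit coordinate description afforded by the linear coordinate $s$, using Shamsuddin's criterion (Proposition \ref{prop:sham}) to control the restriction $\overline D$ on $\K[x_1]$-complemented subrings. The rest of the argument — the dichotomy from Lemma \ref{lem:deltatrans} and the bookkeeping with coordinates — is routine.
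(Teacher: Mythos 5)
Your first half coincides with the paper's proof: the dichotomy from Lemma \ref{lem:deltatrans} reduces everything to the case $\Delta(s)=0$, and Proposition \ref{prop_locally} then gives that the $\G_a$-action generated by $\Delta$ on $\A^4$ is locally trivial and proper, while preserving the coordinate $s$. The gap is in what you do next. The paper closes the argument at this exact point by invoking Kaliman's theorem that a \emph{proper} $\G_a$-action on $\C^4$ \emph{preserving a coordinate} is a translation (\cite[Thm.~01]{kn:kaliman2}); this is a deep external result, and it is precisely the step you identify as ``the crux'' but do not actually prove. Your proposed substitutes do not work. Passing to the generic fiber over $K=\K(x_1)$ and trivializing there, then ``clearing denominators,'' is exactly where the difficulty lives: there are free, fixed-point-free triangular $\G_a$-actions on $\A^4$ annihilating a coordinate (Winkelmann-type examples) whose restriction over the generic fiber is a translation but which are \emph{not} conjugate to translations on $\A^4$ --- they fail to be proper, which shows that properness must be used in an essential, non-formal way, and no amount of bookkeeping over $\K[x_1]$ after localization can replace it. Likewise, your bundle-theoretic fallback (``a locally trivial free $\G_a$-bundle over a smooth contractible affine base with trivial Picard group is trivial'') presupposes that the geometric quotient is a nice affine threefold and, even granting triviality of the bundle, you would still need to identify the quotient with $\A^3$ to conclude that the action is a translation in the sense of Definition \ref{defi:globtriv}; neither point is automatic, and both are part of what Kaliman's theorem establishes. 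Theorem \ref{thm:derksenetalorig} is also not applicable here, since it concerns unipotent groups of dimension $n-1$ acting on $\A^n$, whereas you have a one-dimensional group acting on $\A^4$.

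So the structure of your argument is right up to the reduction, but the concluding step needs the citation of \cite{kn:kaliman2} (or a full proof of an equivalent statement), not the descent sketch you outline.
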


\begin{proof}
	Let $\phi\in\aut(D)^0$ and  let $\Delta$ be a locally nilpotent
        derivation such that  $\phi=e^\Delta$. By Lemma
        \ref{lem:deltatrans}, it suffices to prove that if $s$ is a
        linear coordinate for $D$ such that  $\Delta(s)=0$, then
        $\phi$ is a translation. By Proposition \ref{prop_locally} we know that
	$\G_a=\{e^{t\Delta}; t\in\K\}$ acts over $\A^4$ in a locally trivial, and
        in particular a proper,
        way. Then,  the result follows from
        \cite[Thm. 01]{kn:kaliman2}.   
\end{proof}

\begin{thm}\label{thm:dim4}
	Let $D\in \der\bigl(\K[x_1,x_2,x_3,x_4]\bigr)$ be a simple
        derivation that admits a
        linear coordinate.  If $\dim\aut(D)^0>0$, then $\aut(D)^0$
        acts by translations. Moreover, if $\dim\aut(D)^0=2$, then
        $\aut(D)$ is algebraic. 
\end{thm}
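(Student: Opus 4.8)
The plan is to establish the first assertion --- that $\aut(D)^0$ acts by translations as soon as $\dim\aut(D)^0>0$ --- and then to read off the algebraicity statement from Corollary \ref{cor:castrasn-2}. By Theorem \ref{thm:dimaut0} we have $\dim\aut(D)^0\le n-2=2$, so under the standing hypothesis only the values $1$ and $2$ occur; in both cases $\aut(D)^0$ is a connected unipotent algebraic group (Theorem \ref{thm:aut0conn}), and for $\dim\aut(D)^0=2$ it is moreover abelian, hence $\aut(D)^0\cong\G_a^2$. Throughout I will use the elementary remark that a locally nilpotent derivation $\Delta$ is recovered from its exponential, since $\Delta=\log(e^\Delta)$; in particular, if $\psi\in\aut(\A^4)$ conjugates $e^\Delta$ to the translation $e^{\partial/\partial x_1}$, then $\psi_{*}\Delta=\partial/\partial x_1$, and so $\psi$ conjugates the whole one-parameter group $\{e^{t\Delta}:t\in\K\}$ onto the group of translations along $x_1$.

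If $\dim\aut(D)^0=1$, then $\aut(D)^0=\{e^{t\Delta}:t\in\K\}$ for some nonzero locally nilpotent $\Delta$, so $e^\Delta$ is a nontrivial element of $\aut(D)^0$ and hence, by Lemma \ref{lem:TransDim4}, is conjugate to a translation. By the remark above, $\aut(D)^0$ is then conjugate to a translation subgroup of $\aut(\A^4)$, i.e. it acts by translations.

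If $\dim\aut(D)^0=2$, pick a basis $\{\Delta_1,\Delta_2\}$ of the (abelian) Lie algebra $\operatorname{Lie}\bigl(\aut(D)^0\bigr)$; these are commuting locally nilpotent derivations. Applying the case $\dim=1$ to the subgroup $\{e^{t\Delta_1}:t\in\K\}\subset\aut(D)$, I first conjugate $D$, $\Delta_1$ and $\Delta_2$ simultaneously so that $\Delta_1=\partial/\partial x_1$. Then $[\partial/\partial x_1,D]=[\partial/\partial x_1,\Delta_2]=0$ forces the coefficients of $D$ and of $\Delta_2$ to lie in $\K[x_2,x_3,x_4]$; in particular $\K[x_2,x_3,x_4]$ is $D$-stable and $\overline D:=D|_{\K[x_2,x_3,x_4]}$ is simple (Remark \ref{rem:restsim}). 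If the restriction $\overline{\Delta}_2$ of $\Delta_2$ to $\K[x_2,x_3,x_4]$ were zero, then $\Delta_2=c\,\partial/\partial x_1$ with $c\in\K[x_2,x_3,x_4]$, and $[D,\Delta_2]=0$ together with $\ker D=\K$ (simplicity of $D$) would give $c\in\K$, contradicting the linear independence of $\Delta_1,\Delta_2$; hence $\overline{\Delta}_2\ne 0$. Now $\overline{\Delta}_2$ is a nonzero locally nilpotent derivation commuting with $\overline D$, so $\dim\aut(\overline D)^0\ge 1$, and Theorem \ref{thm:dimaut0} (with $n=3$) forces $\dim\aut(\overline D)^0=1$. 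By Theorem \ref{thm:autDim3} there is a coordinate change of $x_2,x_3,x_4$, which I extend to $\K[x_1,x_2,x_3,x_4]$ by fixing $x_1$ (so that $\Delta_1=\partial/\partial x_1$ is preserved), after which $\aut(\overline D)^0=\{e^{t\overline{\Delta}_2}:t\in\K\}$ is the group of translations along $x_2$; hence, after rescaling $\Delta_2$, we have $\overline{\Delta}_2=\partial/\partial x_2$ and therefore $\Delta_2=\partial/\partial x_2+c_1(x_2,x_3,x_4)\,\partial/\partial x_1$. Finally, the elementary coordinate change $x_1\mapsto x_1-h$, with $h\in\K[x_2,x_3,x_4]$ chosen so that $\partial h/\partial x_2=c_1$ (which preserves $\partial/\partial x_1$), turns $\Delta_2$ into $\partial/\partial x_2$. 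Thus $\aut(D)^0$ contains the $2$-dimensional group of translations in the $x_1,x_2$-directions, and by Theorem \ref{thm:dimaut0} it equals it; in particular it acts by translations.

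This proves the first assertion. For the second, if $\dim\aut(D)^0=2=n-2$ then $\aut(D)^0=\G_a^{n-2}$ acts by translations, so Corollary \ref{cor:castrasn-2} gives $\aut(D)=\aut(D)^0$, which is an algebraic group. The step I expect to be the main obstacle is the bookkeeping in the case $\dim\aut(D)^0=2$: one must check that the successive conjugations --- normalizing $\Delta_1$, the one furnished by Theorem \ref{thm:autDim3}, and the final elementary one --- can be chosen compatibly so that $\Delta_1=\partial/\partial x_1$ survives at each stage, and, \emph{crucially}, that $\overline D$ inherits a genuinely $1$-dimensional (rather than $0$-dimensional) isotropy group --- which is precisely the point where the simplicity of $D$ and the linear independence of $\Delta_1,\Delta_2$ enter.
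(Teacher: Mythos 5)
Your proof is correct and follows essentially the same route as the paper: normalize one generator to a translation via Lemma \ref{lem:TransDim4}, restrict $D$ to $\K[x_2,x_3,x_4]$, invoke Theorem \ref{thm:autDim3} to straighten the second generator, finish with an explicit antiderivative coordinate change and Corollary \ref{cor:castrasn-2}. The only (harmless) differences are cosmetic: you do the bookkeeping at the Lie-algebra level with $\Delta_1,\Delta_2$ instead of the paper's group elements $u_1,u_2$ (so you bypass Lemma \ref{lem:forAn} and the quotient $X_1$), and you exclude the degenerate case $\overline{\Delta}_2=0$ using $\ker D=\K$ and linear independence, where the paper rules out the trivial induced action via the fixed-point result (Proposition \ref{prop:identity}).
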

\begin{proof}
    Recall that, by Theorem \ref{thm:dimaut0}, $\aut(D)^0$ is a unipotent
    group of dimension $1$ or $2$. In the case of dimension 1	the
    theorem is a direct consequence of Lemma \ref{lem:TransDim4}.  
	
	Now, we suppose that $\dim \aut(D)^0=2$. It is well known that in this
        case $\aut(D)^0=U_1\times U_2$, $U_i\cong \G_a$, --- recall that
        $\operatorname{char}\K=0$. Consider two generators $u_1,u_2$ of
        $U_1$ and $U_2$ respectively. Again by Lemma \ref{lem:TransDim4} we may suppose that
        $u_1$ is a translation with respect to $x_1$; therefore,
        we deduce from Lemma \ref{lem:forAn} that $u_2=\bigl(ax_1+g_1(x_2,x_3,x_4),
        g_2(x_2,x_3,x_4),g_3(x_2,x_3,x_4),g_4(x_2,x_3,x_4)\bigr)$;
        moreover, 
        since $u_1$ and $u_2$ commute,   we deduce that $a=1$. Then the
       geometric  quotient  $q_1:\A^4\to X_1:=\A^4/\overline{\langle U_1\rangle}$ exists and
        is isomorphic to $\A^3$, and $D$ induces a simple derivation
        $\overline{D}\in \der\bigl(\K[X_1]\bigr)=\der\bigl(\K[x_2,x_3,x_4]\bigr)$. As $u_1$ and $u_2$
        commute, we deduce that $U_2$ acts on  $X_1$, in such a way that
        $U_2\subset \aut(\overline{D})$. It follows from  Theorem
        \ref{thm:autDim3} that either the action of $U_2$ over $X_1$ is trivial
        or given by translations.

        If $U_2$ acts trivially, it follows that $u_2=
        \bigl(x_1+g_1(x_2,x_3,x_4), x_2,x_3,x_4\bigr)$. But if
        $(p_2,p_3,p_4)\in\mathcal{V}(g_1)\subset \A^3$, it follows that
        $(0,p_2,p_3,p_4)$ is a fixed point of $u_2$, and we obtain a
        contradiction. Hence $U_2$ acts by translations over $X_1$.
        Changing coordinates in $X_1=\A^3$, we can 
        assume that $u_2=\bigl( x_1+ w(x_2,x_3,x_4), x_2+1,
        x_3,x_4)$. If $\Delta$ is the locally nilpotent derivation such that $u_2=e^{\Delta}$, then $\Delta=a\partial/\partial x_1+\partial/\partial x_2$, with $a\in\K[x_2,x_3,x_4]$. Indeed, if $w$ has degree $\ell$ with respect to $x_2$, then $a$ is determined by the equation
       \begin{eqnarray*}w&=&u_2(x_1)-x_1\\
         &=&\Delta(x_1)+\frac{1}{2}\Delta^2(x_1)+\cdots+\frac{1}{\ell!}\Delta^{\ell}(x_1)\\
                         &=&a+\frac{1}{2}\frac{\partial}{\partial x_2}(a)+\cdots+\frac{1}{\ell!}\frac{\partial^{\ell-1}}{\partial x_2^{\ell-1}}(a).
       \end{eqnarray*}
 Let $A\in\K[x_2,x_3,x_4]$ be a polynomial such that $\partial/\partial x_2 (A)=a$, and consider the coordinates
 $z,x_2,x_3,x_4$, where $z= x_1-A+x_2$.  Then  $u_1(z)=z+1$, and since $\Delta(z)=1$  then $u_2(z)=z+1$. We conclude that in these new coordinates
 $u_1=(z+1,x_2,x_3,x_4)$ and $u_2=(z+1,x_2+1,x_3,x_4)$, and  therefore
 $\aut(G)^0$ is included in the group of
translations.

In order to finish the proof, we apply Corollary  \ref{cor:castrasn-2}.
\end{proof}

  We finish by showing that the absence of linear
   coordinates is not an obstruction for a simple derivation to have
   algebraic automorphisms group.

\begin{exa}
In \cite{kn:Jor},  the author shows that the derivations
$D_n\in \der\bigl(\pol\bigr)$ given by
\[
  D_n=(1-x_1x_2)\frac{\partial}{\partial{x_1}}+x_1^3\frac{\partial}{\partial{x_2}}+\sum_{i=3}^nx_{i-1}\frac{\partial}{\partial{x_i}}
\]	
are simple for all $n\geq 2$. In \cite{Ya}, the author shows that if
$n\geq 3$, then 
$\aut(D_n)\cong \G_a$, acting by translations in the last coordinate
--- recall that $\aut(D_2)=\{\operatorname{Id}\}$. We affirm that
$\operatorname{Im}(D_n)\cap \Bbbk=\{0\}$ --- in particular, the family $D_n$,
$n\geq 2$, 
gives  examples of simple derivations without linear coordinates such
that their automorphism group is algebraic.

Indeed, $\operatorname{Im}(D_n)$ is the linear span of $G=\{D_n(x^d)\mathrel{:} d\in\N^n\}$. 

Let $(e_i)_{1\leq i\leq n}$ be the canonical basis of $\Z^n$, a direct
computation shows that: $D_n(x^{e_1})=1-x_1x_2$ is the only polynomial
which contains monomials $1$ and $x_1,x_2$ with non zero coefficient
in $G$ and so $1\notin\operatorname{Im}(D_n)$. 
\end{exa}

\end{document}